\newlength{\margins}
\begin{document}


\newcommand{\alp}{\alpha}
\newcommand{\bet}{\beta}
\newcommand{\gam}{\gamma}
\newcommand{\del}{\delta}
\newcommand{\eps}{\epsilon}
\newcommand{\zet}{\zeta}
\newcommand{\tht}{\theta}
\newcommand{\iot}{\iota}
\newcommand{\kap}{\kappa}
\newcommand{\lam}{\lambda}
\newcommand{\sig}{\sigma}
\newcommand{\ups}{\upsilon}
\newcommand{\ome}{\omega}
\newcommand{\vep}{\varepsilon}
\newcommand{\vth}{\vartheta}
\newcommand{\vpi}{\varpi}
\newcommand{\vrh}{\varrho}
\newcommand{\vsi}{\varsigma}
\newcommand{\vph}{\varphi}
\newcommand{\Gam}{\Gamma}
\newcommand{\Del}{\Delta}
\newcommand{\Tht}{\Theta}
\newcommand{\Lam}{\Lambda}
\newcommand{\Sig}{\Sigma}
\newcommand{\Ups}{\Upsilon}
\newcommand{\Ome}{\Omega}


\newcommand{\frka}{{\mathfrak a}}    \newcommand{\frkA}{{\mathfrak A}}
\newcommand{\frkb}{{\mathfrak b}}    \newcommand{\frkB}{{\mathfrak B}}
\newcommand{\frkc}{{\mathfrak c}}    \newcommand{\frkC}{{\mathfrak C}}
\newcommand{\frkd}{{\mathfrak d}}    \newcommand{\frkD}{{\mathfrak D}}
\newcommand{\frke}{{\mathfrak e}}    \newcommand{\frkE}{{\mathfrak E}}
\newcommand{\frkf}{{\mathfrak f}}    \newcommand{\frkF}{{\mathfrak F}}
\newcommand{\frkg}{{\mathfrak g}}    \newcommand{\frkG}{{\mathfrak G}}
\newcommand{\frkh}{{\mathfrak h}}    \newcommand{\frkH}{{\mathfrak H}}
\newcommand{\frki}{{\mathfrak i}}    \newcommand{\frkI}{{\mathfrak I}}
\newcommand{\frkj}{{\mathfrak j}}    \newcommand{\frkJ}{{\mathfrak J}}
\newcommand{\frkk}{{\mathfrak k}}    \newcommand{\frkK}{{\mathfrak K}}
\newcommand{\frkl}{{\mathfrak l}}    \newcommand{\frkL}{{\mathfrak L}}
\newcommand{\frkm}{{\mathfrak m}}    \newcommand{\frkM}{{\mathfrak M}}
\newcommand{\frkn}{{\mathfrak n}}    \newcommand{\frkN}{{\mathfrak N}}
\newcommand{\frko}{{\mathfrak o}}    \newcommand{\frkO}{{\mathfrak O}}
\newcommand{\frkp}{{\mathfrak p}}    \newcommand{\frkP}{{\mathfrak P}}
\newcommand{\frkq}{{\mathfrak q}}    \newcommand{\frkQ}{{\mathfrak Q}}
\newcommand{\frkr}{{\mathfrak r}}    \newcommand{\frkR}{{\mathfrak R}}
\newcommand{\frks}{{\mathfrak s}}    \newcommand{\frkS}{{\mathfrak S}}
\newcommand{\frkt}{{\mathfrak t}}    \newcommand{\frkT}{{\mathfrak T}}
\newcommand{\frku}{{\mathfrak u}}    \newcommand{\frkU}{{\mathfrak U}}
\newcommand{\frkv}{{\mathfrak v}}    \newcommand{\frkV}{{\mathfrak V}}
\newcommand{\frkw}{{\mathfrak w}}    \newcommand{\frkW}{{\mathfrak W}}
\newcommand{\frkx}{{\mathfrak x}}    \newcommand{\frkX}{{\mathfrak X}}
\newcommand{\frky}{{\mathfrak y}}    \newcommand{\frkY}{{\mathfrak Y}}
\newcommand{\frkz}{{\mathfrak z}}    \newcommand{\frkZ}{{\mathfrak Z}}


\newcommand{\bfa}{{\mathbf a}}    \newcommand{\bfA}{{\mathbf A}}
\newcommand{\bfb}{{\mathbf b}}    \newcommand{\bfB}{{\mathbf B}}
\newcommand{\bfc}{{\mathbf c}}    \newcommand{\bfC}{{\mathbf C}}
\newcommand{\bfd}{{\mathbf d}}    \newcommand{\bfD}{{\mathbf D}}
\newcommand{\bfe}{{\mathbf e}}    \newcommand{\bfE}{{\mathbf E}}
\newcommand{\bff}{{\mathbf f}}    \newcommand{\bfF}{{\mathbf F}}
\newcommand{\bfg}{{\mathbf g}}    \newcommand{\bfG}{{\mathbf G}}
\newcommand{\bfh}{{\mathbf h}}    \newcommand{\bfH}{{\mathbf H}}
\newcommand{\bfi}{{\mathbf i}}    \newcommand{\bfI}{{\mathbf I}}
\newcommand{\bfj}{{\mathbf j}}    \newcommand{\bfJ}{{\mathbf J}}
\newcommand{\bfk}{{\mathbf k}}    \newcommand{\bfK}{{\mathbf K}}
\newcommand{\bfl}{{\mathbf l}}    \newcommand{\bfL}{{\mathbf L}}
\newcommand{\bfm}{{\mathbf m}}    \newcommand{\bfM}{{\mathbf M}}
\newcommand{\bfn}{{\mathbf n}}    \newcommand{\bfN}{{\mathbf N}}
\newcommand{\bfo}{{\mathbf o}}    \newcommand{\bfO}{{\mathbf O}}
\newcommand{\bfp}{{\mathbf p}}    \newcommand{\bfP}{{\mathbf P}}
\newcommand{\bfq}{{\mathbf q}}    \newcommand{\bfQ}{{\mathbf Q}}
\newcommand{\bfr}{{\mathbf r}}    \newcommand{\bfR}{{\mathbf R}}
\newcommand{\bfs}{{\mathbf s}}    \newcommand{\bfS}{{\mathbf S}}
\newcommand{\bft}{{\mathbf t}}    \newcommand{\bfT}{{\mathbf T}}
\newcommand{\bfu}{{\mathbf u}}    \newcommand{\bfU}{{\mathbf U}}
\newcommand{\bfv}{{\mathbf v}}    \newcommand{\bfV}{{\mathbf V}}
\newcommand{\bfw}{{\mathbf w}}    \newcommand{\bfW}{{\mathbf W}}
\newcommand{\bfx}{{\mathbf x}}    \newcommand{\bfX}{{\mathbf X}}
\newcommand{\bfy}{{\mathbf y}}    \newcommand{\bfY}{{\mathbf Y}}
\newcommand{\bfz}{{\mathbf z}}    \newcommand{\bfZ}{{\mathbf Z}}


\newcommand{\cala}{{\mathcal A}}
\newcommand{\calb}{{\mathcal B}}
\newcommand{\calc}{{\mathcal C}}
\newcommand{\cald}{{\mathcal D}}
\newcommand{\cale}{{\mathcal E}}
\newcommand{\calf}{{\mathcal F}}
\newcommand{\calg}{{\mathcal G}}
\newcommand{\calh}{{\mathcal H}}
\newcommand{\cali}{{\mathcal I}}
\newcommand{\calj}{{\mathcal J}}
\newcommand{\calk}{{\mathcal K}}
\newcommand{\call}{{\mathcal L}}
\newcommand{\calm}{{\mathcal M}}
\newcommand{\caln}{{\mathcal N}}
\newcommand{\calo}{{\mathcal O}}
\newcommand{\calp}{{\mathcal P}}
\newcommand{\calq}{{\mathcal Q}}
\newcommand{\calr}{{\mathcal R}}
\newcommand{\cals}{{\mathcal S}}
\newcommand{\calt}{{\mathcal T}}
\newcommand{\calu}{{\mathcal U}}
\newcommand{\calv}{{\mathcal V}}
\newcommand{\calw}{{\mathcal W}}
\newcommand{\calx}{{\mathcal X}}
\newcommand{\caly}{{\mathcal Y}}
\newcommand{\calz}{{\mathcal Z}}


\newcommand{\scra}{{\mathscr A}}
\newcommand{\scrb}{{\mathscr B}}
\newcommand{\scrc}{{\mathscr C}}
\newcommand{\scrd}{{\mathscr D}}
\newcommand{\scre}{{\mathscr E}}
\newcommand{\scrf}{{\mathscr F}}
\newcommand{\scrg}{{\mathscr G}}
\newcommand{\scrh}{{\mathscr H}}
\newcommand{\scri}{{\mathscr I}}
\newcommand{\scrj}{{\mathscr J}}
\newcommand{\scrk}{{\mathscr K}}
\newcommand{\scrl}{{\mathscr L}}
\newcommand{\scrm}{{\mathscr M}}
\newcommand{\scrn}{{\mathscr N}}
\newcommand{\scro}{{\mathscr O}}
\newcommand{\scrp}{{\mathscr P}}
\newcommand{\scrq}{{\mathscr Q}}
\newcommand{\scrr}{{\mathscr R}}
\newcommand{\scrs}{{\mathscr S}}
\newcommand{\scrt}{{\mathscr T}}
\newcommand{\scru}{{\mathscr U}}
\newcommand{\scrv}{{\mathscr V}}
\newcommand{\scrw}{{\mathscr W}}
\newcommand{\scrx}{{\mathscr X}}
\newcommand{\scry}{{\mathscr Y}}
\newcommand{\scrz}{{\mathscr Z}}


\newcommand{\AAA}{{\mathbb A}} 
\newcommand{\BB}{{\mathbb B}}
\newcommand{\CC}{{\mathbb C}}
\newcommand{\DD}{{\mathbb D}}
\newcommand{\EE}{{\mathbb E}}
\newcommand{\FF}{{\mathbb F}}
\newcommand{\GG}{{\mathbb G}}
\newcommand{\HH}{{\mathbb H}}
\newcommand{\II}{{\mathbb I}}
\newcommand{\JJ}{{\mathbb J}}
\newcommand{\KK}{{\mathbb K}}
\newcommand{\LL}{{\mathbb L}}
\newcommand{\MM}{{\mathbb M}}
\newcommand{\NN}{{\mathbb N}}
\newcommand{\OO}{{\mathbb O}}
\newcommand{\PP}{{\mathbb P}}
\newcommand{\QQ}{{\mathbb Q}}
\newcommand{\RR}{{\mathbb R}}
\newcommand{\SSS}{{\mathbb S}} 
\newcommand{\TT}{{\mathbb T}}
\newcommand{\UU}{{\mathbb U}}
\newcommand{\VV}{{\mathbb V}}
\newcommand{\WW}{{\mathbb W}}
\newcommand{\XX}{{\mathbb X}}
\newcommand{\YY}{{\mathbb Y}}
\newcommand{\ZZ}{{\mathbb Z}}

\newcommand{\phm}{\phantom}
\newcommand{\ds}{\displaystyle }
\newcommand{\smallstrut}{\vphantom{\vrule height 3pt }}
\def\bdm #1#2#3#4{\left(
\begin{array} {c|c}{\ds{#1}}
 & {\ds{#2}} \\ \hline
{\ds{#3}\vphantom{\ds{#3}^1}} &  {\ds{#4}}
\end{array}
\right)}
\newcommand{\wtd}{\widetilde }
\newcommand{\bsl}{\backslash }
\newcommand{\GL}{{\mathrm{GL}}}
\newcommand{\SL}{{\mathrm{SL}}}
\newcommand{\GSp}{{\mathrm{GSp}}}
\newcommand{\PGSp}{{\mathrm{PGSp}}}
\newcommand{\SP}{{\mathrm{Sp}}}
\newcommand{\SO}{{\mathrm{SO}}}
\newcommand{\SU}{{\mathrm{SU}}}
\newcommand{\Ind}{\mathrm{Ind}}
\newcommand{\Hom}{{\mathrm{Hom}}}
\newcommand{\Ad}{{\mathrm{Ad}}}
\newcommand{\Sym}{{\mathrm{Sym}}}
\newcommand{\Mat}{\mathrm{M}}
\newcommand{\sgn}{\mathrm{sgn}}
\newcommand{\trs}{\,^t\!}
\newcommand{\iu}{\sqrt{-1}}
\newcommand{\oo}{\hbox{\bf 0}}
\newcommand{\ono}{\hbox{\bf 1}}
\newcommand{\smallcirc}{\lower .3em \hbox{\rm\char'27}\!}
\newcommand{\bAf}{\bA_{\hbox{\eightrm f}}}
\newcommand{\thalf}{{\textstyle{\frac12}}}
\newcommand{\shp}{\hbox{\rm\char'43}}
\newcommand{\Gal}{\operatorname{Gal}}

\newcommand{\bdel}{{\boldsymbol{\delta}}}
\newcommand{\bchi}{{\boldsymbol{\chi}}}
\newcommand{\bgam}{{\boldsymbol{\gamma}}}
\newcommand{\bome}{{\boldsymbol{\omega}}}
\newcommand{\bpsi}{{\boldsymbol{\psi}}}
\newcommand{\GK}{\mathrm{GK}}
\newcommand{\ord}{\mathrm{ord}}
\newcommand{\diag}{\mathrm{diag}}
\newcommand{\ua}{{\underline{a}}}
\newcommand{\ZZn}{\ZZ_{\geq 0}^n}
\newcommand{\calhnd}{{\mathcal H}^\mathrm{nd}}
\newcommand{\EGK}{\mathrm{EGK}}

\newcommand{\Zp}{\ZZ_p}
\newcommand{\Qp}{\QQ_p}
\def\mat#1#2{\calhnd_{#1}(\ZZ_{#2})}
\def\Zmat#1{\calhnd_{#1}(\ZZ)}
\def\subsuperscript#1#2#3{#1_{#2}^{#3}}
\def\ordet#1{\ord(\det #1)}
\newcommand{\intmult}{(T_{m_1} \cdot T_{m_2}\cdot T_{m_3})_{S}}
\newcommand{\vecm}{\underline{m}}
\def\LB#1{L_2(\vecm;#1)}
\newcommand{\LA}{L_1(\vecm)}
\newcommand{\LBp}{\LB{p}}
\newcommand{\LBB}{L_2(\vecm)}

\theoremstyle{plain}
\newtheorem{theorem}{Theorem}[section]
\newtheorem{lemma}{Lemma}[section]
\newtheorem{proposition}{Proposition}[section]
\newtheorem{problem}{Problem}[section]
\theoremstyle{definition}
\newtheorem{definition}{Definition}[section]
\newtheorem{conjecture}{Conjecture}[section]
\newtheorem{remark}{\textbf{Remark}}[section]
\newtheorem{corollary}{\textbf{Corollary}}[section]

\newtheorem{mainthm}{\textbf{Theorem}}
\renewcommand{\themainthm}{}
\newtheorem{example}{Example}[section]

\title[]{An explicit formula for the extended Gross-Keating datum of a quadratic form}
\author[S. Cho]{Sungmun Cho}
\address{Department of Mathematics, POSTECH, 77, Cheongam-ro, Nam-gu, Pohang-si, Gyeongsangbuk-do, 37673, KOREA}
\email{sungmuncho12@gmail.com}
\author[T. Ikeda]{Tamotsu Ikeda}
\address{Graduate school of mathematics, Kyoto University, Kitashirakawa, Kyoto, 606-8502, Japan}
\email{ikeda@math.kyoto-u.ac.jp}
\author[H. Katsurada]{Hidenori Katsurada}
\address{Muroran Institute of Technology,
27-1 Mizumoto, Muroran, 050-8585, Japan}
\email{hidenori@mmm.muroran-it.ac.jp}
\author[C.-h. Lee]{Chul-hee Lee}
\address{School of Mathematics, Korea Institute for Advanced Study, Seoul 02455, Korea}
\email{chlee@kias.re.kr}
\author[T. Yamauchi]{Takuya Yamauchi}
\address{Mathematical Institute of Tohoku University, 
6-3 Aoba, Aramaki, Aoba-Ku, Sendai ,980-8578, Japan}
\email{yamauchi@math.tohoku.ac.jp}

\subjclass[2010]{11E08, 11E95}
\keywords{Gross-Keating invariant, quadratic form, Siegel series}
\begin{abstract}
In this paper, we give a formula for the extended Gross-Keating datum of a quadratic form defined over a finite extension of $\ZZ_p$ (for $p>2$) or a finite unramified extension of $\ZZ_2$. 
As an application, we describe an explicit formula for the Siegel series for $\ZZ_p$.
We also present the details of algorithms implemented in a Mathematica package to compute the extended Gross-Keating datum and the Siegel series.
\end{abstract}
\maketitle


\tableofcontents

\section*{Introduction}
{\bf Background.}
In 1993, B. Gross and K. Keating (cf. \cite{GK}) computed certain arithmetic intersection number in the self-product $Y_0(1)\times Y_0(1)/ \ZZ$ of the moduli stack $Y_0(1)$ of elliptic curves over $\ZZ$. 
One amazing fact in their work is to describe it purely in terms of (Euler product and) certain invariant of a ternary quadratic form over $\ZZ_p$ invented by themselves.
This invariant is later generalized to a quadratic lattice (or a half-integral symmetric matrix) of any degree defined over a finite extension of $\ZZ_p$.  It is nowadays called the Gross-Keating invariant.
The Gross-Keating invariant had been almost forgotten\footnote{The Gross-Keating invariant had been treated in \cite{Bouw}. The first sentence of loc. cit. says `This note provides details on \cite{GK} Section 4.' We also note that Wedhorn \cite{Wedhorn} reformulated the explicit  formula of the Siegel series in \cite{Kat} for  the ternary case in   terms of Gross-Keating invariant.  } for a while after the work of Gross and Keating.
Recently, in \cite{IK2}, two (Ikeda and Katsurada) of us proved that the local factor of the Fourier coefficient of the Siegel-Eisenstein series, which is called  the Siegel series,
is completely determined by the Gross-Keating invariant imposed with additional datum, called \textit{the extended GK datum}.
Using this, another two (Cho and Yamauchi) of us reformulated the Siegel series in terms of smoothening of certain integral schemes in \cite{CY}.
This work has been  used as a key ingredient\footnote{This terminology is taken from page 7 of \cite{lz}.} in a recent proof of Kudla-Rapoport conjecture by C. Li and W. Zhang in \cite{lz}, which compares the arithmetic intersection multiplicity of special cycles on (unitary or orthogonal) Rapoport-Zink spaces with the derivative of the Siegel series in the context of Kudla's program.

Let us explain in which context the notion of the extended GK datum first appeared. 
Let $F$ be a non-archimedean local field of characteristic $0$, and $\frko_F$ the ring of integers in $F$.
In \cite{Kat}, the third named author gave an explicit formula for the Siegel series in the case $\frko_F=\ZZ_p$.
However, the formula is complicated at a glance in the case $p=2$ and it is not clear which invariant of a half-integral symmetric matrix determines the Siegel series. 
To overcome these, in \cite{IK1} we introduced the extended GK datum, which is an extended version of the Gross-Keating invariant, and expressed the Siegel series of a half-integral symmetric matrix $B$ over  $\frko_F$ for any $F$ in terms of it in \cite{IK2}. 

In \cite{IK1}, the extended GK datum  of $B$ is shown to be obtained through a Jordan decomposition of $B$ in the non-dyadic case,   and also through an optimal form (see Definition \ref{def.1.2}) or a reduced form (see Definition \ref{def.1.8}) of $B$ in the dyadic case. However, there is currently no effective algorithm for obtaining  the latter two, which makes it difficult to compute the extended GK datum using them in the dyadic case.
\\

{\bf Main results.}
Our main result is 
to give a formula\footnote{This formula is used in \cite{Cho1} and \cite{CY}.} for the extended GK datum, denoted by $\EGK(B)$, and for a naive EGK datum
of a half-integral symmetric matrix $B$ in the case that $F$ is a finite unramified extension of $\QQ_2$ (cf. Theorems \ref{th.4.2}-\ref{th.4.3}), in terms of the newly introduced notion of pre-optimal forms (cf. Definition \ref{def.2.2}). 
We note that
these two data can easily be given in the case $F$ is a non-dyadic field
(cf. Theorem \ref{th.4.4}).  
Remarkably,  $\EGK(B)$ is given through a pre-optimal form of $B$, which is more simply obtained than an optimal form or a reduced form of $B$. 


As an application,  in the case $F=\QQ_p$, we describe an explicit formula for the Siegel series $b(B,s)$ of $B$  in terms of a naive EGK datum of $B$ (cf. Theorem \ref{th.5.5}). This formula essentially coincides with \cite[Theorem 4.3]{Kat} which has been generalized to an arbitrary non-archimedean local field of characteristic $0$ in more sophisticated manner in \cite[Theorem 1.1]{IK2}. Due to Theorems \ref{th.3.1} and \ref{th.4.3}, however, our formula has the advantage of being effective for computing the Siegel series, and consequently takes a simpler and strikingly unified form (cf. $\calf(H;X)$ in Definition \ref{def.5.2}), independent of the parity of $p$.

Finally, based on the above results, we have built the Mathematica package \textit{computeGK}\footnote{The code is available at \url{https://github.com/chlee-0/computeGK}.}. This is the first computer program devoted to the Gross-Keating invariant. The program enables to compute a pre-optimal form, a naive extended Gross-Keating datum, and the Siegel series of a half-integral symmetric matrix over $\ZZ_p$. Indeed, by using this program, the third named  author could compute the Fourier coefficients 
of the Klingen-Eisenstein lift of a cuspidal Hecke eigenform for $Sp_2(\ZZ)$ to a  (vector valued) modular form for $Sp_4(\ZZ)$ for the first time, which plays a crucial role in proving the congruence  conjectured by Harder \cite{Ha} in some cases (cf. \cite{Kat2}). Note that there is a LISP code for the Siegel series, used in \cite{MR1954971, MR3739221} based on the formulas of \cite{Kat}, but it is more specialized in computing the Fourier coefficients of the Siegel-Eisenstein series.
\\

{\bf The structure of the paper.}
The paper is organized as follows.
In Section 1,  we recall the Gross-Keating invariants and the extended GK datum following \cite{IK1}. In Section 2, we introduce the notion of pre-optimal forms, and give a key theorem for obtaining an induction formula for  naive EGK data of 
pre-optimal forms (cf. Theorem \ref{th.2.1}) in the case
$F$ is a finite unramified extension of $\QQ_2$.
In Section 3, we give an explicit formula for the Gross-Keating invariant.
In Section 4, we recall EGK data and naive EGK data, and give an explicit  formula for naive EGK datum of a half-integral symmetric matrix in the case $F$ is a finite unramified extension of $\QQ_2$. 
In Section 5, we give an explicit formula for the Siegel series in terms a naive EGK data in the case $F=\QQ_p$.
In Section 6, we discuss the detailed procedures to obtain a weak canonical decomposition for the purpose of algorithmic computation in the case $F=\QQ_2$.
In Appendix A, we provide the tables of arithmetic intersection numbers of three modular correspondences, generated by \textit{computeGK}.

\bigskip

{\bf Acknowledgments.}
This research was partially supported by the JSPS KAKENHI Grant Number 16F16316, 25247001, 17H02834, and 16H03919,  KIAS Grant MG067301, Samsung Science and Technology
Foundation under Project Number SSTF-BA1802-03, and NRF-2018R1A4A1023590.
We thank Cris Poor, Jerry Shurman and David Yuen for allowing their code for Jordan decomposition to be included in the \textit{computeGK } program with modifications. 
Lee would like to thank Richard Borcherds for introducing the results of \cite{Kat} to him.
\bigskip

{\bf Notation.} Let $F$ be a non-archimedean local field of characteristic $0$, and $\frko=\frko_F$ its ring of integers.
The maximal ideal and  the residue field of $\frko$ is denoted by $\frkp$ and $\frkk$, respectively.
We put $q=[\frko:\frkp]$.
$F$ is said to be dyadic if $q$ is even.
We fix a prime element $\vpi$ of $\frko$ once and for all.
The order of $x\in F^\times$ is given by $\mathrm{ord}(x)=n$ for $x\in \vpi^n \frko^\times$.
We understand $\ord(0)=+\infty$.
Put $F^{\times 2}=\{x^2\,|\, x\in F^\times\}$.
Similarly, we put $\frko^{\times 2}=\{x^2\,|\, x\in\frko^\times\}$.
When $R$ is a ring, the set of $m\times n$ matrices with entry in $R$ is denoted by $\mathrm{M}_{mn}(R)$ or $\mathrm{M}_{m,n}(R)$.
As usual, $\mathrm{M}_n(R)=\mathrm{M}_{n,n}(R)$.
The identity matrix of degree  $n$ is denoted by $\mathbf{1}_n$.
For $X_1\in \mathrm{M}_s(R)$ and $X_2\in\mathrm{M}_t(R)$, the matrix $\begin{pmatrix} X_1 & 0 \\ 0 & X_2\end{pmatrix}\in\mathrm{M}_{s+t}(R)$ is denoted by $X_1\perp X_2$.
The diagonal matrix whose diagonal entries are $b_1$, $\ldots$, $b_n$ is denoted by $\mathrm{diag}(b_1, \dots, b_n)=(b_1)\perp\dots\perp (b_n)$.

For two sequences $\AAA=(A_1,\ldots,A_r)$ and $\BB=(B_1,\ldots,B_s)$ of numbers or sets, we denote by $(\AAA,\BB)$ the sequence $(A_1,\ldots,A_r,B_1,\ldots,B_s)$.
\section{Extended GK datum}

The set of symmetric matrices $B\in \mathrm{M}_n(F)$ of degree $n$ is denoted by $\mathrm{Sym}_n(F)$.
For $B\in \mathrm{Sym}_n(F)$ and $X\in\GL_n(F)$, we set $B[X]={}^t\! XBX$.
When $G$ is a subgroup of $\GL_n(F)$, we shall say that two elements $B_1, B_2\in\mathrm{Sym}_n(F)$ are called $G$-equivalent, if there is an element $X\in G$ such that $B_1[X]=B_2$.
We say that $B=(b_{ij})\in \mathrm{Sym}_n(F)$ is a half-integral symmetric matrix if
\begin{align*}
b_{ii}\in\frko_F &\qquad (1\leq i\leq n),  \\
2b_{ij}\in\frko_F& \qquad (1\leq i\leq j\leq n).
\end{align*}
The set of all half-integral symmetric matrices of degree $n$ is denoted by $\calh_n(\frko)$.
An element $B\in\calh_n(\frko)$ is non-degenerate if $\det B\neq 0$.
The set of all non-degenerate elements of $\calh_n(\frko)$ is denoted by $\calh_n(\frko)^{\rm nd}$.
For $B=(b_{ij})_{1\leq i,j\leq n}\in\calh_n(\frko)$ and $1\leq m\leq n$,  we denote the upper left $m\times m$ submatrix $(b_{ij})_{1\leq i, j\leq m}\in\calh_m(\frko)$ by $B^{(m)}$.

When two elements $B, B'\in\calh_n(\frko)$ are $\GL_n(\frko)$-equivalent, we just say they are equivalent and write $B\sim B'$.
The equivalence class of $B$ is denoted by $\{B\}$, i.e., 
$\{B\}=\{B[U]\,|\, U\in\GL_n(\frko)\}$.

\begin{definition}
\label{def.1.1}
Let $B=(b_{ij})\in\calh_n(\frko)^{\rm nd}$.
Let $S(B)$ be the set of all non-decreasing sequences $(a_1, \ldots, a_n)\in\ZZn$ such that
\begin{align*}
&\ord(b_{ii})\geq a_i \qquad\qquad\qquad\quad (1\leq i\leq n), \\
&\ord(2 b_{ij})\geq (a_i+a_j)/2  \qquad\; (1\leq i\leq j\leq n).
\end{align*}
Put
\[
\bfS(\{B\})=\bigcup_{B'\in\{B\}} S(B')=\bigcup_{U\in\GL_n(\frko)} S(B[U]).
\]
The Gross-Keating invariant $\GK(B)$ of $B$ is the greatest element of $\bfS(\{B\})$ with respect to the lexicographic order $\succeq$ on $\ZZn$.
\end{definition}
Here, the lexicographic order $\succeq$ is, as usual, defined as follows.
For distinct sequences $(y_1, y_2, \ldots, y_n),$  $(z_1, z_2, \ldots, z_n)\in \ZZ_{\geq 0}^n$, let $j$ be the largest integer such that $y_i=z_i$ for $i<j$.
Then $(y_1, y_2, \ldots, y_n)\succneqq  (z_1, z_2, \ldots, z_n)$ if $y_j>z_j$.
We define $(y_1, y_2, \ldots, y_n)\succeq  (z_1, z_2, \ldots, z_n)$ if $(y_1, y_2, \ldots, y_n)\succneqq  (z_1, z_2, \ldots, z_n)$ or $(y_1, y_2, \ldots, y_n) = (z_1, z_2, \ldots, z_n)$.

A sequence of length $0$ is denoted by $\emptyset$.
When $B$ is the empty matrix, we understand $\GK(B)=\emptyset$.
By definition, the Gross-Keating invariant $\GK(B)$ is determined only by the equivalence class of $B$.
Note that $\GK(B)=(a_1, \ldots, a_n)$ is also defined by
\begin{align*}
a_1&=\max_{(y_1,  \ldots)\in \bfS(\{B\})} \,\{y_1\}, \\
a_2&=\max_{(a_1, y_2, \ldots)\in \bfS(\{B\})}\, \{y_2\}, \\
&\cdots \\
a_n&=\max_{(a_1, a_2, \ldots, a_{n-1}, y_n)\in \bfS(\{B\})}\, \{y_n\}.
\end{align*}

\begin{definition} 
\label{def.1.2}
$B\in\calh_n(\frko)$ is optimal if $\GK(B)\in S(B)$.
\end{definition}
By definition, a non-degenerate half-integral symmetric matrix $B\in\calh_n(\frko)^{\rm nd}$ is equivalent to an optimal form.

Let $L$ be a free module of rank $n$ over $\frko$, and $Q$ a $\frko$-valued quadratic form on $L$.
The pair $(L, Q)$ is called a quadratic module over $\frko$.
The symmetric bilinear form $(x, y)_Q$ associated to $Q$ is defined by
\[
 (x, y)_Q=Q(x+y)-Q(x)-Q(y), \qquad x, y\in L.
\]
When there is no fear of confusion, $(x, y)_Q$ is simply denoted by $(x, y)$.
If $\underline{\psi}=\{\psi_1, \ldots, \psi_n\}$ is an ordered basis of $L$, we call the triple $(L, Q, \underline{\psi})$ a framed quadratic $\frko$-module.
Hereafter, ``a basis'' means an ordered basis.
For a framed quadratic $\frko$-module $(L, Q, \underline{\psi})$, we define a matrix $B=(b_{ij})\in\calh_n(\frko)$ by
\[
 b_{ij}=\frac12 (\psi_i, \psi_j).
\]
The isomorphism class of $(L, Q, \underline{\psi})$ (as a framed quadratic $\frko$-module) is determined by $B$.
We say that $B\in\calh_n(\frko)$ is associated to the framed quadratic module $(L, Q, \underline{\psi})$.
If $B$ is non-degenerate, we also say $(L, Q)$ or $(L, Q, \underline{\psi})$ is non-degenerate.
The set $S(B)$ is also denoted by $S(\underline{\psi})$.
If $B$ is optimal, then $\underline{\psi}$ is called an optimal basis.
We consider $\mathrm{Aut}(L)$ acting on $L$ from the right.
We note that  the equivalence class of $B$ is determined by the isomorphism class of the quadratic modules $(L, Q)$. Conversely, for $B \in \calh_n(\frko)$, we can take a framed quadratic $\frko$-module $(L,Q,\underline{\psi})$  such  that  $B$ is associated to $(L,Q,\underline{\psi})$. 
Then we write $L$ as $L_B$. Let $B=B_1 \bot B_2$ with $B_i \in \calh_n(\frko) \ (i=1,2)$. Then $L_{B_i}$ can be regarded as a $\frko$-submodule of $L_{B}$ in  a natural way.

For $B\in\calh_n(\frko)^{\rm nd}$, we put $D_B=(-4)^{[n/2]}\det B$.
If $n$ is even, we denote the discriminant ideal of $F(\sqrt{D_B})/F$ by $\frkD_B$. Put 
\[\frke_B=
\begin{cases}
\ord(D_B)-\ord(\frkD_B)   & \text{ if $n$ is even} \\
\ord(D_B)                         & \text{ if $n$ is odd.}
\end{cases}\]
We also put
\[
\xi_B=
\begin{cases} 
1 & \text{ if $D_B\in F^{\times 2}$,} \\
-1 & \text{ if $F(\sqrt{D_B})/F$ is unramified and $[F(\sqrt{D_B}):F]=2$,} \\
0 & \text{ if $F(\sqrt{D_B})/F$ is ramified.} 
\end{cases}
\]
\begin{definition}\label{def.1.3}
For $B\in\calh_n(\frko)^{\rm nd}$, we put
\[
\Del(B)=
\begin{cases}
\ord(D_B) & \text{ if $n$ is odd,} \\
\ord(D_B)-\ord(\mathfrak{D}_B)+1-\xi_B^2 & \text{ if $n$ is even.}
\end{cases}
\]
\end{definition}
Note that if $n$ is even, then
\[
\Del(B)=
\begin{cases}
\ord(D_B) & \text{ if $\ord(\mathfrak{D}_B)=0$,} \\
\ord(D_B)-\ord(\mathfrak{D}_B)+1 & \text{ if $\ord(\mathfrak{D}_B)>0$.}
\end{cases}
\]

For $\ua=(a_1, a_2, \ldots, a_n)\in\ZZn$, we write $|\ua|=a_1+a_2+\cdots+a_n$. We review some results in \cite{IK1}.

\begin{theorem} 
\label{th.1.1} {\rm(\cite{IK1}, Theorem 0.1)}
For $B\in\calh_n(\frko)^{\rm nd}$,  we have
\[
|\GK(B)|=\Del(B).
\]
\end{theorem}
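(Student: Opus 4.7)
The plan is to observe first that both $|\GK(B)|$ and $\Del(B)$ depend only on the equivalence class $\{B\}$: the former is immediate from Definition \ref{def.1.1}, and the latter follows from the fact that $\det B$ is determined on $\{B\}$ up to a square in $\frko^\times$, so $D_B$, $\mathfrak{D}_B$, and $\xi_B$ are all well-defined invariants of $\{B\}$. Since every non-degenerate $B$ is equivalent to an optimal form (by the remark after Definition \ref{def.1.2}), I may assume $B$ is optimal with $\GK(B) = (a_1, \ldots, a_n) \in S(B)$, so that $\ord(b_{ii}) \geq a_i$ and $\ord(2 b_{ij}) \geq (a_i + a_j)/2$ in the given basis.

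Next I would proceed by induction on $n$, using a Jordan-type decomposition of optimal forms into orthogonal blocks of size $1$ or $2$. In the non-dyadic case this is ordinary diagonalization, $B \sim \diag(u_1 \vpi^{a_1}, \ldots, u_n \vpi^{a_n})$ with $u_i \in \frko^\times$, and since $\ord(2) = 0$ both sides of the identity reduce to $\sum a_i$ after a short case check; the correction $-\ord(\mathfrak{D}_B) + 1 - \xi_B^2$ in the even case is absorbed because $\ord(\mathfrak{D}_B) \in \{0,1\}$ is correlated with $\xi_B^2$ across the three cases (trivial, unramified, ramified quadratic extension). The dyadic case additionally requires handling indecomposable $2 \times 2$ blocks.

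For the inductive step, write $B = B_1 \perp B_2$ with both summands optimal and $\GK(B) = (\GK(B_1), \GK(B_2))$, which immediately gives $|\GK(B)| = |\GK(B_1)| + |\GK(B_2)|$. On the other side, $\det B = \det B_1 \cdot \det B_2$, but the factor $(-4)^{[n/2]}$ in $D_B$ produces a combinatorial jump $[n/2] - [n_1/2] - [n_2/2] \in \{0,1\}$, equal to $1$ precisely when both $n_1, n_2$ are odd. The ramification of $F(\sqrt{D_B})/F$ is determined by $F(\sqrt{D_{B_1}})$ and $F(\sqrt{D_{B_2}})$ via the multiplication rule for quadratic characters, so $\mathfrak{D}_B$ and $\xi_B$ are computable from $\mathfrak{D}_{B_i}$ and $\xi_{B_i}$ case-by-case. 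The identity $\Del(B) = \Del(B_1) + \Del(B_2)$ then reduces to a finite check over the combinations of ramification types of the two summands and the parities of $n_1, n_2$.

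The main obstacle is the dyadic case, where the factor $(-4)^{[n/2]}$ contributes $2[n/2]\ord(2)$ nontrivially to $\ord(D_B)$, and this must be balanced exactly by the changes in $-\ord(\mathfrak{D}_B) + 1 - \xi_B^2$ as one combines blocks. Concretely, a detailed analysis is needed for $2 \times 2$ indecomposable optimal blocks $B_0$: one computes $-4 \det B_0$ explicitly from the entries, classifies the resulting quadratic extension $F(\sqrt{D_{B_0}})/F$ according to whether its discriminant is a square, a non-square unit, or a uniformizer-type element, and verifies $\Del(B_0) = a_1 + a_2$ in each case. The extended GK datum introduced later in the paper is precisely the bookkeeping device designed to make this block-by-block analysis systematic.
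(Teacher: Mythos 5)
This statement is quoted from [\cite{IK1}, Theorem 0.1]; the paper under review gives no proof of it, so your argument has to stand on its own. It does not: the inductive step rests on the claim that an optimal $B$ can be written as $B_1\perp B_2$ with both summands optimal and $\GK(B)=(\GK(B_1),\GK(B_2))$, and this is false in the dyadic case. Take $B=\diag(1,1)$ over $\ZZ_2$. A direct change of basis (first column $(a,b)$ with $a$ odd, $b$ even, second column $(c,d)$ with $c,d$ odd) produces an equivalent matrix with $\ord(b_{11})=0$, $\ord(2b_{12})=1$, $\ord(b_{22})=1$, so $(0,1)\in\bfS(\{B\})$ and hence $\GK(B)\succeq(0,1)$; on the other hand any orthogonal splitting realizing the concatenation $((0),(1))$ would force $\ord(\det B)=1$, whereas $\ord(\det B)=0$ is an equivalence invariant. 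So no such decomposition exists, even though $B$ is literally a diagonal (hence orthogonally decomposable) matrix. The same example kills the other half of your step: $\Del((1))+\Del((1))=0$, while $D_B=-4$ generates the ramified extension $\QQ_2(\sqrt{-1})$, so $\Del(B)=2-2+1=1$. Thus $\Del$ is not additive over orthogonal sums, and the ``finite check over ramification types'' you propose would return the wrong answer rather than confirm additivity. Your plan of isolating indecomposable $2\times2$ blocks misses precisely this phenomenon: the obstruction already occurs for decomposable unimodular diagonal pieces, because distinct Jordan blocks interact (this is exactly what conditions (PO3)--(PO6) and Lemma \ref{lem.2.2} of the present paper are bookkeeping, where the increment $\Del(B)-\Del(B_1)$ is $k_r$, $k_r+1$, or $k_r+2$ depending on parities and $\xi$-invariants of $B_1$ -- not simply $\Del(B_2)$).

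The compatibility that actually holds, and that [\cite{IK1}] uses, is with upper-left corners rather than orthogonal summands: if $a_k<a_{k+1}$ then $B^{(k)}$ is optimal with $\GK(B^{(k)})=\ua^{(k)}$ (Theorem \ref{th.1.3}), but $B$ is not $B^{(k)}\perp(\text{complement})$ in an optimal basis -- the admissible involution $\sigma$ of a reduced form pairs indices across different GK-blocks via unit off-diagonal entries, as in the example above. A correct proof needs two genuinely missing ingredients: an upper bound $|\ua|\le\Del(B)$ valid for every $\ua\in\bfS(\{B\})$ (the naive determinant estimate only gives $|\ua|\le n\,\ord(2)+\ord(\det B)$, which must be sharpened by a parity/discriminant argument), and the construction of a reduced form witnessing $|\ua|=\Del(B)$. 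Neither appears in your outline, and the decomposition you would induct on does not exist.
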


For a non-decreasing sequence $\ua=(a_1, a_2, \ldots, a_n)\in\ZZn$, we set
\[
G_{\ua}=
\{g=(g_{ij})\in \GL_n(\frko) \,|\, \text{ $\ord(g_{ij})\geq (a_j-a_i)/2$ \ if $a_i<a_j$}\}.
\]

\begin{theorem} 
\label{th.1.2} {\rm \cite{IK1}, Theorem 0.2)}
Suppose that $B\in\calhnd_n(\frko)$ is optimal and  $\GK(B)=\ua$.
Let $U\in \GL_n(\frko)$.
Then $B[U]$ is optimal if and only if $U\in G_\ua$.
\end{theorem}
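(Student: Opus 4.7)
The plan is to prove the two implications separately: the forward direction is a direct matrix calculation, while the reverse requires more delicate analysis.

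\textbf{Forward direction.} Suppose $U=(u_{ij})\in G_\ua$, which I state uniformly as $\ord(u_{ij})\geq \max\bigl(0,(a_j-a_i)/2\bigr)$. The crucial observation is that $\ord(2b_{kl})\geq (a_k+a_l)/2$ for all $k,l$: off-diagonal by the definition of $S(B)$, and on-diagonal because $\ord(b_{kk})\geq a_k$. Expanding
\[
2(B[U])_{ij}=\sum_{k,l} u_{ki}(2b_{kl})u_{lj},
\]
each summand satisfies
\[
\ord\bigl(u_{ki}\cdot 2b_{kl}\cdot u_{lj}\bigr)\geq \bigl(\ord(u_{ki})+\tfrac{a_k}{2}\bigr)+\bigl(\ord(u_{lj})+\tfrac{a_l}{2}\bigr)\geq \tfrac{a_i+a_j}{2},
\]
where the last inequality reduces to a case check on the signs of $a_i-a_k$ and $a_j-a_l$. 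A parallel, simpler estimate gives $\ord((B[U])_{ii})\geq a_i$, so $\ua\in S(B[U])$. Since $B[U]\sim B$ implies $\GK(B[U])=\GK(B)=\ua$, we conclude that $B[U]$ is optimal.

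\textbf{Reverse direction.} Conversely, assume $B[U]$ is optimal, i.e.\ $\ua\in S(B[U])$. Let $v_j$ denote the $j$-th column of $U$ and $Q$ the quadratic form on $L_B$ associated to $B$. The diagonal and off-diagonal $S$-conditions read $\ord(Q(v_j))\geq a_j$ and $\ord((v_i,v_j)_Q)\geq (a_i+a_j)/2$. Group $\ua$ according to its distinct values $c_1<\cdots<c_r$ with multiplicities $n_1,\ldots,n_r$, and block $U$ accordingly as $(U_{\alpha\beta})_{1\leq\alpha,\beta\leq r}$. The goal is to show that whenever $\alpha<\beta$, every entry of $U_{\alpha\beta}$ has $\ord\geq (c_\beta-c_\alpha)/2$.

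\textbf{Core lemma and main obstacle.} The key technical step is: \emph{for optimal $B$ with $\GK(B)=\ua$, if $v=\sum_k u_k\psi_k\in L_B$ satisfies $\ord(Q(v))\geq c$, then $\ord(u_k)\geq (c-a_k)/2$ for every $k$ with $a_k<c$.} Applied column-by-column to $U$, this yields $U\in G_\ua$ and finishes the proof. The main obstacle is the dyadic case, where a cross-term $u_k u_l(2b_{kl})$ can have the same order as a diagonal term $u_k^2 b_{kk}$, so the naive ``lowest-order term wins'' argument breaks down and one must rule out cancellations. I would handle this by reducing to the weak canonical form of Section 2, in which $B$ is block diagonal with blocks of order $c_\alpha$ and the core lemma decouples block by block into unit-level computations; the general case then follows because, by the forward direction, the basis change to canonical form itself lies in $G_\ua$, so the membership ``$U\in G_\ua$'' is unaffected by this reduction.
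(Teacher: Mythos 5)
Your forward direction is correct: the uniform estimate $\ord(u_{ki})+a_k/2\geq a_i/2$ combined with $\ord(2b_{kl})\geq(a_k+a_l)/2$ does give $\ua\in S(B[U])$, and optimality follows since $\GK(B[U])=\GK(B)=\ua$. (The paper itself offers no proof; Theorem \ref{th.1.2} is quoted from \cite{IK1}.)

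The reverse direction, however, rests on a core lemma that is false, already over a non-dyadic field. Take $B=\mathrm{diag}(1,-1,\vpi^2)$, which is optimal with $\ua=(0,0,2)$, and $v=\psi_1+\psi_2$. Then $Q(v)=0$, so $\ord(Q(v))=+\infty\geq 2$, yet $u_1=u_2=1$, violating the asserted bound $\ord(u_k)\geq(2-a_k)/2=1$. The problem is that your lemma uses only the single diagonal condition $\ord(Q(v))\geq c$ on one vector; in the presence of isotropic vectors in the low-valuation blocks (which exist whenever $\zeta$-type invariants permit, and always for hyperbolic planes in weak canonical form) this condition carries no information about the coordinates. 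Any correct argument must exploit the full package: the off-diagonal conditions $\ord((v_i,v_j))\geq(a_i+a_j)/2$ for \emph{all} pairs, and crucially the fact that $v_1,\ldots,v_n$ form an $\frko$-basis of $L_B$ — in the example above one checks that no completion of $v=\psi_1+\psi_2$ to a basis satisfies all the $S$-conditions for $(0,0,2)$, and that is the only reason the theorem survives. Your proposed reduction to weak canonical form does not rescue the lemma (a hyperbolic block $\frac12\left(\begin{smallmatrix}0&1\\1&0\end{smallmatrix}\right)$ of low valuation already contains vectors with $Q=0$), and it is moreover circular: you invoke the forward direction to claim that the basis change to canonical form lies in $G_\ua$, but the forward direction only says that elements of $G_\ua$ preserve optimality — concluding membership in $G_\ua$ from a property of the transformed matrix is precisely the reverse implication you are trying to prove.
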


For $\underline{a}=(a_1, a_2, \ldots, a_n)\in\ZZ_{\geq 0}^n$, we put $\underline{a}^{(m)}=(a_1, a_2, \ldots, a_m)$ for $m\leq n$.

\begin{theorem} 
\label{th.1.3}{\rm \cite{IK1}, Theorem 0.3)}
Suppose that $B\in\calh_n(\frko)$ is optimal and  $\GK(B)=\ua$.
If $a_k<a_{k+1}$, then $B^{(k)}$ is also optimal and $\GK(B^{(k)})=\ua^{(k)}$.
\end{theorem}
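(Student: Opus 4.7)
The direction $\GK(B^{(k)}) \succeq \ua^{(k)}$ is immediate: the inequalities witnessing $\ua \in S(B)$ restrict trivially to the first $k$ indices to give $\ua^{(k)} \in S(B^{(k)}) \subseteq \bfS(\{B^{(k)}\})$. Once the reverse inequality is proved, $\GK(B^{(k)}) = \ua^{(k)} \in S(B^{(k)})$, and Definition~\ref{def.1.2} then also makes $B^{(k)}$ optimal; both conclusions therefore reduce to showing $\GK(B^{(k)}) \preceq \ua^{(k)}$.

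For the reverse inequality I would argue by induction on $r \in \{1, \ldots, k\}$ that ${\underline{b}}_r \leq a_r$, where ${\underline{b}} := \GK(B^{(k)})$; combined with ${\underline{b}} \succeq \ua^{(k)}$ this forces ${\underline{b}} = \ua^{(k)}$. Assume ${\underline{b}}_i = a_i$ for $i < r$ and suppose, for contradiction, that ${\underline{b}}_r \geq a_r + 1$. Pick $U' \in \GL_k(\frko)$ realizing ${\underline{b}}$, so that $(a_1, \ldots, a_{r-1}, {\underline{b}}_r, \ldots, {\underline{b}}_k) \in S(B^{(k)}[U'])$, and set $V = U' \perp \mathbf{1}_{n-k} \in \GL_n(\frko)$; since $B[V] \sim B$ we still have $\GK(B[V]) = \ua$. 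The plan is to exhibit a sequence $\underline{y} \in S(B[V])$ with $\underline{y} \succ \ua$, contradicting $\GK(B[V]) = \ua$. The natural candidate is $\underline{y} = (a_1, \ldots, a_{r-1}, a_r + 1, a_r + 1, \ldots, a_r + 1)$ of length $n$, which differs from $\ua$ first at position $r$, where $a_r + 1 > a_r$.

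Checking $\underline{y} \in S(B[V])$ splits into three pieces: (i) the upper-left $k \times k$ block conditions, handled by ${\underline{b}} \in S(B^{(k)}[U'])$ together with the inductive hypothesis and the monotonicity ${\underline{b}}_i \geq {\underline{b}}_r \geq a_r + 1$ for $i \geq r$; (ii) the lower-right block $B_{22}$ conditions, handled by $\ua \in S(B)$ together with the crucial jump $a_{k+1} > a_k \geq a_r$, which gives $a_{k+1} \geq a_r + 1$; and (iii) the cross-block inequalities $\ord(2(B[V])_{ij}) \geq (y_i + y_j)/2$ for $i \leq k < j$, using $(B[V])_{ij} = \sum_{p \leq k} U'_{pi} B_{pj}$. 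In the base case $r = 1$ the constant sequence $\underline{y} = (a_1+1, \ldots, a_1+1)$ works because the naive estimate
\[
\ord(2 (B[V])_{ij}) \;\geq\; \min_p \lceil (a_p + a_j)/2 \rceil \;\geq\; \lceil (a_1 + a_{k+1})/2 \rceil \;\geq\; a_1 + 1,
\]
in which the ceiling converts the half-unit gap $a_{k+1} > a_1$ into a full integral unit, already suffices. The main obstacle is piece~(iii) in the inductive step: when some $y_i = a_i$ with $i < r$ is strictly larger than $a_1$, the above naive estimate is too weak, and one has to extract structural divisibilities on $U'$ from the assumption ${\underline{b}}_r \geq a_r + 1$. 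Expanding $(B^{(k)}[U'])_{ii} = \sum_{p,q} U'_{pi} b_{pq} U'_{qi}$ for $i \geq r$ and tracking $\vpi$-orders term-by-term using $\ord(b_{pp}) \geq a_p$ and $\ord(2 b_{pq}) \geq \lceil(a_p + a_q)/2\rceil$, one aims to force lower bounds of the form $\ord(U'_{pi}) \geq \lceil({\underline{b}}_r - a_p)/2 \rceil$ for $p < r$ and $i \geq r$; substituting these refined divisibilities into $\sum_p U'_{pi} B_{pj}$ upgrades the cross-block bound to the required value. I expect this structural analysis of $U'$---in particular controlling the possible interference between diagonal and off-diagonal contributions in the expansion---to be the most delicate step of the argument.
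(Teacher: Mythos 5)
The paper does not actually prove this statement --- it is quoted verbatim from [\cite{IK1}, Theorem 0.3] --- so I can only assess your argument on its own terms. Your reduction is sound: $\ua^{(k)}\in S(B^{(k)})$ is immediate from optimality of $B$, the induction scheme on $r$ is coherent, the candidate sequence $\underline{y}=(a_1,\ldots,a_{r-1},a_r+1,\ldots,a_r+1)$ does satisfy $\underline{y}\succ\ua$, and your base case $r=1$ genuinely closes (the ceiling trick using $a_{k+1}\geq a_1+1$ is correct). The problem is the step you yourself flag as delicate, and it is not merely delicate --- the intermediate claim it rests on is false. You propose to derive $\ord(U'_{pi})\geq\lceil(\underline{b}_r-a_p)/2\rceil$ for $p<r$, $i\geq r$ by ``tracking $\vpi$-orders term-by-term'' in the expansion of $(B^{(k)}[U'])_{ii}$. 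Take $F=\QQ_2$, $B^{(k)}=\diag(1,-1)$ (so $a_1=a_2=0$ in the sense of $S(B^{(k)})$) and $U'=\left(\begin{smallmatrix}1&1\\0&1\end{smallmatrix}\right)$. Then $B^{(k)}[U']=\left(\begin{smallmatrix}1&1\\1&0\end{smallmatrix}\right)$ and $(0,2)\in S(B^{(k)}[U'])$, yet $\ord(U'_{12})=0<1=\lceil(2-0)/2\rceil$. The diagonal entry $(B^{(k)}[U'])_{22}=1\cdot 1+1\cdot(-1)=0$ attains large order through exact cancellation of two order-zero terms, so no term-by-term bookkeeping can extract the entry-wise divisibility you need; and without it your cross-block estimate (iii) in the inductive step has no support.

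This cancellation phenomenon is not a pathology you can engineer around locally: in the dyadic case it is the central difficulty of the whole Gross--Keating theory, and it is why the paper's Theorem \ref{th.1.2} (the statement that optimality-preserving $U$ lie in $G_{\ua}$) is itself a nontrivial theorem rather than a computation, and why [\cite{IK1}] proves Theorems 0.2 and 0.3 together via the machinery of reduced forms and $\ua$-admissible involutions (Definitions \ref{def.1.6}--\ref{def.1.8} here) rather than by direct order estimates on change-of-basis matrices. To repair your argument you would need either to invoke that machinery (e.g.\ first replace $B$ by a reduced form of GK type $(\ua,\sigma)$ and exploit the block structure forced by admissibility) or to find some other global input beyond the diagonal order conditions; as written, the inductive step does not go through.
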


\begin{definition} 
\label{def.1.4}
The Clifford invariant (see Scharlau \cite{scharlau}, p.~333) of $B\in\calhnd_n(\frko)$ is the Hasse invariant of the Clifford algebra (resp.~the even Clifford algebra) of $B$ if $n$ is even (resp.~odd).
\end{definition}
We denote the Clifford invariant of $B$ by $\eta_B$.
If $B$ is $\GL_n(F)$-equivalent to $\mathrm{diag}(b'_1, \ldots, b'_n)$, then 
\begin{align*}
\eta_B
=&
\langle -1, -1 \rangle^{[(n+1)/4]}\langle -1, \det B \rangle^{[(n-1)/2]} \prod_{i < j} \langle b'_i, b'_j \rangle \\
=&\begin{cases}
\langle -1, -1 \rangle^{m(m-1)/2}\langle -1, \det B \rangle^{m-1} \ds\prod_{i < j} \langle b'_i, b'_j \rangle & \text{ if $n=2m$, } \\
\noalign{\vskip 6pt}
\langle -1, -1 \rangle^{m(m+1)/2}\langle -1, \det B \rangle^{m} \ds\prod_{i < j} \langle b'_i, b'_j \rangle & \text{ if $n=2m+1$. } 
\end{cases}
\end{align*}
(See Scharlau \cite{scharlau} pp.~80--81.)
If $H\in\calhnd_2(\frko)$ is $\GL_2(F)$-isomorphic to a hyperbolic plane, then $\eta_{B\perp H}=\eta_B$.
In particular, if $n$ is odd, then we have
\begin{align*}
\eta_B
=&
\begin{cases} 
1 & \text{ if  $B$ is split over $F$,} \\
-1 & \text{ otherwise.} 
\end{cases}
\end{align*}

\begin{theorem} 
\label{th.1.4}{\rm \cite{IK1}, Theorem 0.4)}
Let $B, B_1\in\calhnd_n(\frko)$.
Suppose that $B \sim B_1$ and both $B$ and $B_1$ are optimal.
Let $\ua=(a_1, a_2, \ldots, a_n)=\GK(B)=\GK(B_1)$.
Suppose that $a_k<a_{k+1}$ for $1\leq k < n$.
Then the following assertions (1) and (2) hold.
\begin{itemize}
\item[(1)] If $k$ is even, then $\xi_{B^{(k)}}=\xi_{B_1^{(k)}}$.
\item[(2)] If $k$ is odd, then $\eta_{B^{(k)}}=\eta_{B_1^{(k)}}$.
\end{itemize}
\end{theorem}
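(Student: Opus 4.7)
The plan is to combine Theorem~\ref{th.1.2} and Theorem~\ref{th.1.3} with a careful block analysis of any change of basis realising $B\sim B_1$. Applying Theorem~\ref{th.1.3} to $B$ and to $B_1$ shows that both $B^{(k)}$ and $B_1^{(k)}$ are optimal with GK invariant $\ua^{(k)}=(a_1,\ldots,a_k)$; Theorem~\ref{th.1.1} then forces $\ord(\det B^{(k)})=\ord(\det B_1^{(k)})$, so only the unit parts of the discriminants need to be matched. Next, pick $U\in\GL_n(\frko)$ with $B_1=B[U]$; by Theorem~\ref{th.1.2}, $U\in G_\ua$. Writing
\[
U=\begin{pmatrix}U_{11}&U_{12}\\ U_{21}&U_{22}\end{pmatrix}
\]
relative to $n=k+(n-k)$, every entry $u_{ij}$ of $U_{12}$ satisfies $a_i\le a_k<a_{k+1}\le a_j$, so the $G_\ua$ condition gives $\ord(u_{ij})\ge (a_j-a_i)/2>0$. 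Hence $U_{12}\in\vpi\mathrm{M}_{k,n-k}(\frko)$, and reducing $\det U\in\frko^\times$ modulo $\vpi$ forces $U_{11}\in\GL_k(\frko)$ and $U_{22}\in\GL_{n-k}(\frko)$.

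Writing $B=\begin{pmatrix}B^{(k)}&C\\ {}^tC&D\end{pmatrix}$, the top-left $k\times k$ block of $B_1={}^tUBU$ becomes
\[
B_1^{(k)}=B^{(k)}[U_{11}]+E,\qquad E={}^tU_{11}CU_{21}+{}^tU_{21}\,{}^tC\,U_{11}+{}^tU_{21}DU_{21}.
\]
Since $B^{(k)}[U_{11}]\sim B^{(k)}$ under $\GL_k(\frko)$, both $\xi$ and $\eta$ agree on $B^{(k)}$ and on $B^{(k)}[U_{11}]$, so the theorem reduces to showing that the perturbation $E$ preserves $\xi$ when $k$ is even and preserves $\eta$ when $k$ is odd. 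The orders of the entries of $C$, $D$, and $U_{21}$ all involve $a_{k+1}$ (via optimality of $B$ and the $G_\ua$ condition), which strictly exceeds $a_k$; combining these bounds one estimates $(B^{(k)}[U_{11}])^{-1}E$ entry-wise. For even $k$, one aims to deduce $\det(I+(B^{(k)}[U_{11}])^{-1}E)\in F^{\times 2}$, giving $\det B_1^{(k)}/\det B^{(k)}\in F^{\times 2}$ and hence $\xi_{B_1^{(k)}}=\xi_{B^{(k)}}$. For odd $k$, one diagonalises the forms over $F$ and uses additivity of the Clifford invariant under orthogonal sums to track $\eta$ through the perturbation.

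The main obstacle is this entrywise bookkeeping: $E$ need not have entries of uniformly higher $\vpi$-valuation than the corresponding entries of $B^{(k)}[U_{11}]$, so the required square-class (resp.\ trivial-Hasse-class) statement is not immediate from a crude bound. A more structural route I would try is to realise $B$ and $B_1$ as Gram matrices of a common quadratic $\frko$-module $(L,Q)$ in two ordered bases $\underline{\psi}$ and $\underline{\psi}'=\underline{\psi}\,U$, and to compare the $k$-dimensional $F$-subspaces $V:=\sum_{i\le k}F\psi_i$ and $V_1:=\sum_{i\le k}F\psi'_i$ of $L\otimes F$. The bound on $U_{12}$ makes the projection $V_1\to V$ along $\psi_{k+1},\ldots,\psi_n$ an $F$-isomorphism, and one then modifies $\psi'_1,\ldots,\psi'_k$ by suitable $F$-linear combinations of $\psi_{k+1},\ldots,\psi_n$ to produce an isometry from $V_1$ onto $V$ modulo the claimed invariant, with Witt's theorem controlling the residual ambiguity. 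The parity dependence in the conclusion reflects the fact that the discriminant and the Hasse invariant are affected asymmetrically by such codimension-$(n-k)$ perturbations, so that only the relevant one is preserved in general.
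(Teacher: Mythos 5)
Your reduction is sound as far as it goes: Theorem \ref{th.1.3} does show that $B^{(k)}$ and $B_1^{(k)}$ are both optimal with Gross--Keating invariant $\ua^{(k)}$, and the observation that $U\in G_{\ua}$ forces $U_{12}\equiv 0 \bmod \vpi$, hence $U_{11}\in\GL_k(\frko)$ and $B_1^{(k)}=B^{(k)}[U_{11}]+E$, is correct and is the natural first move. But the argument stops exactly where the theorem becomes nontrivial. You concede that the entrywise bounds on $E={}^tU_{11}CU_{21}+{}^tU_{21}\,{}^tC\,U_{11}+{}^tU_{21}DU_{21}$ do not suffice, and indeed they cannot: the entries of $E$ exceed the generic valuations of the corresponding entries of $B^{(k)}[U_{11}]$ by only about $(a_{k+1}-a_k)/2$, and in the dyadic case a multiplicative perturbation of the determinant by $1+O(\vpi)$ can already change its square class (units of $\QQ_2$ are squares only when $\equiv 1 \bmod 8$). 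So the conclusion genuinely requires more than a size estimate, and no such argument is supplied. (A smaller slip: for even $k$, Theorem \ref{th.1.1} gives $\Del(B^{(k)})=\Del(B_1^{(k)})$, which involves $\ord(\mathfrak{D})$ and $\xi^2$ and does not by itself yield $\ord(\det B^{(k)})=\ord(\det B_1^{(k)})$.)

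The fallback ``structural route'' does not repair this. If one could produce an $F$-isometry between $V=\sum_{i\le k}F\psi_i$ and $V_1=\sum_{i\le k}F\psi_i'$, then \emph{all} $F$-invariants of these spaces --- discriminant class and Hasse invariant alike --- would agree, and both assertions (1) and (2) would hold for every $k$ regardless of parity. The theorem is deliberately asymmetric (and Definition \ref{def.1.5} records only $\xi$ at even levels and only $\eta$ at odd levels precisely because the other invariant is not well defined there), so $V$ and $V_1$ are not isometric over $F$ in general, and Witt's theorem cannot ``control the residual ambiguity'' in the way you hope; the phrase ``modulo the claimed invariant'' is carrying the entire burden of the proof without content. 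Note also that the present paper does not prove this statement: it is quoted from \cite{IK1}, Theorem 0.4, where the proof runs through the classification of optimal forms by reduced forms attached to $\ua$-admissible involutions (the machinery recalled in Definitions \ref{def.1.6}--\ref{def.1.8} and Theorems \ref{th.1.5}--\ref{th.1.6}) together with an explicit computation of $\xi$ and $\eta$ for reduced forms, rather than through a direct perturbation estimate on $E$.
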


\begin{definition} 
\label{def.1.5}
Let  $B \in \calh_n(\frko )$ be an optimal form such that $\GK(B)=\ua$.
Write 
\[\mathrm{GK}(B)=(\underbrace{m_1,\ldots,m_1}_{n_1},\ldots,\underbrace{m_r,\ldots,m_r}_{n_r})\]
with $m_1<\cdots<m_r$ and $n=n_1+\cdots+n_{r-1}+n_r$.
For $j=1,2,\ldots,r$ put
$$n_j^\ast=\sum_{u=1}^j n_u.$$ 
We define $\zeta_s=\zeta_s(B)$ by 
\[
\zeta_s=\zeta_s(B)=
\begin{cases}
\xi_{B^{(n_s^\ast)}} & \text{ if $n_s^\ast$ is even,} \\
\eta_{B^{(n_s^\ast)}} & \text{ if $n_s^\ast$ is odd.} 
\end{cases}
\]
Then put
$\EGK(B)=(n_1,\ldots,n_r;m_1,\ldots,m_r;\zeta_1,\ldots,\zeta_r)$.
For $B \in \calhnd_n(\frko )$, we define $\EGK(B)=\EGK(B')$, where $B'$ is an optimal form equivalent to $B$.
\end{definition}
By  Theorem  \ref{th.1.4}, this definition does not depend on the choice of $B'$.
Thus, $\EGK(B)$ depends only on the equivalence class of $B$.
We call $\EGK(B)$ the extended GK datum of $B$.

From now on, we assume that $F$ is a dyadic field.  Now we recall the notion of reduced form.
We denote by $\frkS_n$ the symmetric group of degree $n$. Recall that a permutation $\sig\in \frkS_n$ is an involution if $\sig^2=\mathrm{id}$.

\begin{definition}
\label{def.1.6}

For an involution $\sig \in \frkS_n$ and a non-decreasing sequence $\ua=(a_1,\ldots,a_n)$ of non-negative integers, we set
\begin{align*}
\calp^0&=\calp^0(\sigma)=\{i\,| 1\leq i\leq n,\;  i=\sig(i)\}, \\
\calp^+&=\calp^+(\sigma)=\{i\,| 1\leq i\leq n,\;  a_i>a_{\sig(i)}\}, \\
\calp^-&=\calp^-(\sigma)=\{i\,| 1\leq i\leq n,\;  a_i<a_{\sig(i)}\}. 
\end{align*}
Write $\ua$ as 
\[\ua=(\underbrace{m_1,\ldots,m_1}_{n_1},\ldots,\underbrace{m_r,\ldots,m_r}_{n_r})\]
with $m_1<\cdots<m_r$ and $n=n_1+\cdots+n_{r-1}+n_r$.
For $j=1,2,\ldots,r$ put
$$n_j^\ast=\sum_{u=1}^j n_u$$
and $$I_s=\{n_{s-1}^\ast+1,\ldots,n_s^\ast\}.$$
We  say that an involution $\sig\in\frkS_n$ is an $\underline{a}$-admissible involution if the following two conditions are satisfied.
\begin{itemize}
\item[(i)] 
$\calp^0$ has at most two elements.
If $\calp^0$ has two distinct elements $i$ and $j$, then $a_i\not\equiv a_j \text{ mod $2$}$. 
Moreover, if $i \in  I_s\cap \calp^0$, then $i$ is the maximal element of $I_s$, and
\[i=\max\{j \ | \ j \in \calp^0 \cup \calp^+ , a_j \equiv a_i \text{ mod } 2 \}.\]
\item[(ii)]
For $s=1, \ldots, r$,  there is at most one element in $I_s\cap\calp^-$.
If $i \in  I_s\cap\calp^-$, then $i$ is the maximal element of $I_s$ and
\[
\sig(i)=\min\{j\in \calp^+ \,| \, j>i,\, a_j\equiv a_i \text{ mod } 2\}.
\]
\item[(iii)]
For $s=1, \ldots, r$,  there is at most one element in $I_s\cap\calp^+$.
If $i \in  I_s\cap\calp^+$, then $i$ is the minimal element of $I_s$ and
\[
\sig(i)=\max\{j\in \calp^- \,| \, j<i,\, a_j\equiv a_i \text{ mod } 2\}.
\]
\item[(iv)]
If $a_i=a_{\sig(i)}$, then $|i -\sig(i)| \le 1$. 
\end{itemize}
\end{definition}
This is called a standard $\underline{a}$-admissible involution in \cite{IK1}, but in this paper we omit the word ``standard'', since we do not consider an $\underline{a}$-admissible involution which is not standard.

\begin{definition}
\label{def.1.7}
For $\ua=(a_1, \ldots, a_n)\in\ZZ_{\geq 0}^n$, put
\begin{align*}
\calm(\ua)&=\left\{B=(b_{ij})\in\calh_n(\frko)\,\vrule\, \begin{array}{ll}\ord(b_{ii})\geq a_i, \\ \ord(2 b_{ij})\geq (a_i+a_j)/2,\end{array} \; (1\leq i < j\leq n)\right\},  \\
\calm^0(\ua)&=\left\{B=(b_{ij})\in\calh_n(\frko)\,\vrule\, \begin{array}{ll}\ord(b_{ii}) > a_i, \\ \ord(2 b_{ij}) > (a_i+a_j)/2,\end{array} \; (1\leq i < j\leq n)\right\}.
\end{align*}
\end{definition}

\begin{definition}
\label{def.1.8}
Let $\sig\in\frkS_n$ be an $\ua$-admissible involution.
We say that $B=(b_{ij})\in \calm(\ua)$ is a reduced form with GK-type $(\ua, \sig)$ if the following conditions are satisfied.
\begin{itemize}
\item[(1)]  If $i\notin\calp^0$ and $i\leq j=\sig(i)$, then 
\[ \begin{cases}
\ord(2 b_{i\,\sig(i)})={\ds \frac{a_i+a_{\sig(i)}}2} & \text{ if }  i\notin\calp^0, \\ 
\noalign{\vskip 6pt}
\ord(b_{ii})=a_i & \text{ if } i\in\calp^-.
 \end{cases}
 \]
\item[(2)] If $i\in\calp^0$, then
\[
\ord(b_{ii})=a_i.
\]
\item[(3)] If $j\neq i, \sig(i)$, then
\[
\ord(2 b_{ij})>\frac{a_i+a_j}2,
\]
\end{itemize}
\end{definition}
We often say that $B$ is a reduced form with GK-type $\ua$ without mentioning $\sig$.
We formally think of a matrix of degree $0$ as a reduced form with GK-type $\emptyset$.
The following theorems are fundamental in our theory.
\begin{theorem}
\label{th.1.5}
{\rm (\cite[Corollary 5.1]{IK1})} Let $B$ be a reduced form of $\GK$ type $(\ua,\sigma)$. Then we have $\GK(B)=\ua$.
\end{theorem}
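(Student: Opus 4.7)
Since $B$ is a reduced form of GK-type $(\ua,\sigma)$, the inequalities of Definition~\ref{def.1.8} (together with $B\in\calm(\ua)$ built into Definition~\ref{def.1.7}) immediately give $\ua\in S(B)\subseteq\bfS(\{B\})$, hence $\GK(B)\succeq\ua$ by the maximality characterization of $\GK$ recorded after Definition~\ref{def.1.1}. The substance of the theorem is therefore the reverse inequality $\GK(B)\preceq\ua$.

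My plan is to induct on $r$, the number of distinct values appearing in $\ua$ (equivalently on $n$ after suitable bookkeeping). The key structural observation is that conditions (1)--(3) of Definition~\ref{def.1.8}, combined with condition (iv) of Definition~\ref{def.1.6} (which forces $|i-\sigma(i)|\le 1$ whenever $a_i=a_{\sigma(i)}$), make $B$ look like a block-diagonal form modulo entries of strictly greater order: each fixed point $i\in\calp^0$ contributes a $1\times 1$ block of exact diagonal order $a_i$, and each pair $\{i,\sigma(i)\}$ with $i<\sigma(i)$ contributes a $2\times 2$ block whose off-diagonal order is exactly $(a_i+a_{\sigma(i)})/2$. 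A Hensel-type cleaning step using elements of $G_\ua$ (which preserve optimality by Theorem~\ref{th.1.2}) should upgrade this congruence decomposition to an honest orthogonal decomposition $B\sim B_1\perp\cdots\perp B_t$ in which each $B_k$ is either one of these $1\times 1$ blocks or one of the $2\times 2$ blocks.

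Given such a decomposition, the conclusion follows in two further steps. First, for each summand $B_k$ one checks $\GK(B_k)$ by hand: the $1\times 1$ case is trivial, and for a $2\times 2$ block with off-diagonal order $(a_i+a_{\sigma(i)})/2$ a direct calculation shows that no $\GL_2(\frko)$-change of basis can push the first diagonal order above $a_i$. Second, one combines the $\GK(B_k)$ into $\GK(B)$ via the sorted concatenation rule for orthogonal sums (using Theorem~\ref{th.1.3} to control submatrices at the jumps $m_s<m_{s+1}$), recovering $\ua$ and thus $\GK(B)\preceq\ua$. The main obstacle is the Hensel-type cleaning step: over a dyadic field the usual orthogonal decomposition of a quadratic form is obstructed, and the $\ua$-admissible involutions with their $2\times 2$ blocks are designed precisely to encode this obstruction. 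One has to verify that the strict valuation inequalities in Definition~\ref{def.1.8}(3) and the pairing constraints of Definition~\ref{def.1.6} provide enough slack to carry out the cleaning inside $G_\ua\subseteq\GL_n(\frko)$ without disturbing the valuation pattern $\ua$, and that the parity bookkeeping in conditions (i)--(iii) prevents any $2\times 2$ block from ``merging'' with another to create a smaller first invariant than predicted by $\ua$.
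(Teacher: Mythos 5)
Your first step is fine: $\ua\in S(B)$ is immediate from Definitions \ref{def.1.7} and \ref{def.1.8}, so $\GK(B)\succeq\ua$, and the entire content of the theorem is the reverse inequality. (Note that this paper gives no proof of the statement --- it is quoted from [IK1, Corollary 5.1] --- so the comparison below is with how that argument, and the analogous arguments in Sections 2--3 of this paper, actually run.)

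The genuine gap is in your final step. There is no ``sorted concatenation rule for orthogonal sums'': the Gross--Keating invariant of $B_1\perp B_2$ is not determined by $\GK(B_1)$ and $\GK(B_2)$; it is only bounded below by their sorted concatenation and can strictly exceed it. Over $\ZZ_2$ one has $\GK(\mathrm{diag}(1,3))=(0,2)$ (by Theorem \ref{th.1.1}, since $\QQ_2(\sqrt{-12})/\QQ_2$ is unramified quadratic), while each $1\times 1$ summand has $\GK=(0)$. This failure is precisely the difficulty that the reduced-form and admissible-involution formalism is designed to control, so for the blocks of a reduced form the concatenation statement is essentially equivalent to the theorem being proved and cannot be invoked as a known tool. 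Theorem \ref{th.1.3} does not repair this: it runs in the wrong direction, deducing $\GK(B^{(k)})=\ua^{(k)}$ from the hypotheses that $B$ is optimal and $\GK(B)=\ua$, which is exactly what you are trying to establish. What the upper bound actually requires is the mass formula $|\GK(B)|=\Delta(B)$ of Theorem \ref{th.1.1}, one-sided inequalities of the type recorded in Lemma \ref{lem.2.1}, and an induction over the blocks that tracks the invariants $\xi$ and $\eta$ of the partial sums (this is how Proposition \ref{prop.3.1} and Theorem \ref{th.3.1} proceed for pre-optimal forms, and how [IK1, Section 5] proves the cited corollary). Your parenthetical worry about blocks ``merging'' identifies the right danger, but conditions (i)--(iv) of Definition \ref{def.1.6} only forbid certain configurations; showing that no improvement of $\ua$ is possible in the configurations that remain is where the real work lies, and the proposal does not supply it. The Hensel-type cleaning step is also asserted rather than carried out, but it is the concatenation step that is fatal as written.
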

\begin{theorem}
\label{th.1.6} 
{\rm  (\cite[Theorem 4.1]{IK1})}
Assume that $\mathrm{GK}(B)=\underline{a}$ for $B\in \calh_n(\frko)^\mathrm{nd}$.
Then $B$ is $GL_n(\frko)$-equivalent to a reduced form of $\GK$ type $(\underline{a}, \sigma)$ for some   $\underline{a}$-admissible involution $\sigma$.
\end{theorem}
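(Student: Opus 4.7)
The plan is to start from an optimal $B$ with $\GK(B)=\underline{a}$ — such a form exists by the remark following Definition 1.2 — and to transform it by a suitable element of $\GL_n(\frko)$ into a reduced form while simultaneously constructing a compatible $\underline{a}$-admissible involution $\sigma$. Write $\underline{a}=(m_1^{n_1},\ldots,m_r^{n_r})$ with $m_1<\cdots<m_r$ and intervals $I_s=\{n_{s-1}^\ast+1,\ldots,n_s^\ast\}$ as in Definition 1.6.

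First I would carry out a cross-block decoupling. Using row/column operations that lie in $G_{\underline{a}}$ (so that optimality is preserved throughout by Theorem \ref{th.1.2}), I would modify $B$ until every off-diagonal entry $b_{ij}$ with $i\in I_s$, $j\in I_t$, $s\neq t$, satisfies the strict inequality $\ord(2b_{ij})>(a_i+a_j)/2$, except possibly at a single pair $(i,\sigma(i))$ to be recorded in $\sigma$. This is possible because a column indexed by $I_t$ can absorb $\vpi^{(m_t-m_s)/2}$-multiples of any column indexed by $I_s$ (with $s<t$) without leaving $G_{\underline{a}}$, which amounts to a block-triangularization in the admissible group.

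Second, I would analyze each within-block restriction $B_{I_s}$ separately. After the scaling by $\vpi^{-m_s}$, each block becomes a half-integral form whose reduction modulo $\frkp$ decomposes as an orthogonal sum of anisotropic unary pieces — each contributing a fixed point of $\sigma$ in $I_s\cap\calp^0$ — and hyperbolic or $A$-type binary pieces, each contributing an adjacent swapped pair $(i,\sigma(i))$ with $|\,i-\sigma(i)\,|=1$ satisfying condition (iv). The dyadic nature of $F$ enters precisely here: a binary summand is in general not diagonalizable, which is exactly why the admissible involution is needed. Further operations in $G_{\underline{a}}$ permute the basis so that within each $I_s$ the pairs form adjacent transpositions and at most one index is left unmatched. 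An unmatched index either becomes a fixed point in $\calp^0$, or, when its residual unit class is incompatible with a within-block pairing, is assigned via $\calp^+/\calp^-$ to the corresponding unmatched index in an earlier or later block of matching parity.

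The main obstacle is to reconcile Steps 1 and 2 with the strict admissibility constraints of Definition 1.6. Condition (i) requires at most two elements in $\calp^0$, necessarily of distinct parities, while (ii)–(iii) force at most one element of each $I_s$ to be paired outside its block, and always at the extremal position. This reflects a nontrivial consolidation phenomenon for dyadic quadratic forms: leftover unit classes across several blocks of the same parity can always be merged into a single cross-block pair. Organizing the inductive rearrangement so that this consolidation is actually realized by a single $\GL_n(\frko)$-equivalence, and then verifying the four admissibility conditions for the resulting $\sigma$ together with (1)–(3) of Definition 1.8, is the technical heart of the argument.
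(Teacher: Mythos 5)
First, a remark on the target: the paper does not prove this theorem at all --- it is imported wholesale from \cite{IK1}, Theorem 4.1 --- so the only ``proof'' in the present text is the citation, and your attempt has to be measured against the long inductive argument given in that reference rather than against anything written here.

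Your outline identifies the correct architecture (pass to an optimal form, use Theorem \ref{th.1.2} to stay inside $G_{\ua}$, split each scaled block into unary and binary even pieces as in a dyadic Jordan decomposition, and record the unavoidable cross-block couplings in an involution), but it has a genuine gap exactly where you yourself place ``the technical heart.'' What must be proved is not merely that the cross-block entries can be pushed strictly above the threshold $(a_i+a_j)/2$ except at some pairs $(i,\sig(i))$; it is that the surviving pairs can be forced into the very rigid pattern of Definition \ref{def.1.6}: at most one element of each $I_s$ in $\calp^+$ and at most one in $\calp^-$, each at an extremal position of $I_s$, matched by the precise $\min$/$\max$ rule among indices $j$ with $a_j$ of the same parity, together with at most two fixed points whose $a_i$ have distinct parities. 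Your Step 1 only asserts ``except possibly at a single pair,'' and your consolidation claim (``leftover unit classes across several blocks of the same parity can always be merged into a single cross-block pair'') is stated, not derived; this \emph{is} the content of the theorem, and without it conditions (i)--(iii) of admissibility remain unverified. There is also a smaller but real flaw in the decoupling step: clearing a cross term $2b_{ij}$ (with $i\in I_s$, $j\in I_t$, $s<t$) by a column operation in $G_{\ua}$ uses the diagonal entry $b_{ii}$ as a pivot, which requires $\ord(b_{ii})=a_i$; in the dyadic case this fails precisely when $i$ sits in a binary even summand (where $\ord(b_{ii})>a_i$ is permitted), and one must pivot on the partner $\sig(i)$ instead --- but $\sig$ is not yet defined at that stage of your argument. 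Hence the construction of $\sig$ and the reduction cannot be run as two sequential steps as you propose; they have to be intertwined in an induction on the blocks, which is how the actual proof in \cite{IK1} is organized.
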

By Theorem \ref{th.1.6}, any non-degenerate  half-integral symmetric matrix $B$ over $\frko$ is $GL_n(\frko)$-equivalent to  a reduced form $B'$. Then we say that $B$ has a reduced decomposition $B'$. For later purpose, we give the following:
\begin{proposition}
\label{prop.1.1}
Let $B =(b_{ij})_{n \times n}$ be a reduced form of type $(\underline{a}, \sigma)$.
\begin{itemize}
\item[(1)] Assume that $n$ is odd and let $i_0 \in \calp^0$. Then  
$$\mathrm{ord}(b_{i_0,i_0}) \equiv \mathrm{ord}(\det B) \text{ mod } 2.$$
\item[(2)] Assume that $n$ is even and $\xi_B=0$.  Then, for any integer $k$,  there is an integer $i_0 \in \calp^0$ such that   
$$\mathrm{ord}(b_{i_0,i_0}) \equiv k \text{ mod } 2.$$
\end{itemize}
\end{proposition}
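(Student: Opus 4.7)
My plan is to combine the exact identity $|\ua|=\Delta(B)$ from Theorem~\ref{th.1.1} with a short Leibniz analysis of $\det B$.

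\textbf{Part (1).} Put $e=\ord(2)$. Since $n$ is odd, $\ord(D_B)=(n-1)e+\ord(\det B)$, so Theorem~\ref{th.1.1} gives
\[
\ord(\det B)\;=\;|\ua|-(n-1)e\;\equiv\;|\ua|\pmod 2,
\]
because $n-1$ is even. The cardinality $|\calp^0|$ is odd (since $M_\sigma$ is a disjoint union of $\sigma$-pairs) and $\leq 2$ by Definition~\ref{def.1.6}(i), so $|\calp^0|=1$; write $\calp^0=\{i_0\}$. Admissibility (ii)--(iv) of Definition~\ref{def.1.6} forces $a_i\equiv a_{\sigma(i)}\pmod 2$ in every $\sigma$-pair, hence
\[
|\ua|\;=\;a_{i_0}+\sum_{\sigma\text{-pairs}}\bigl(a_i+a_{\sigma(i)}\bigr)\;\equiv\;a_{i_0}\pmod 2,
\]
and Definition~\ref{def.1.8}(2) gives $\ord(b_{i_0,i_0})=a_{i_0}$, completing (1).

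\textbf{Part (2).} Now $n$ is even and $\xi_B=0$; parity forces $|\calp^0|\in\{0,2\}$, and I will rule out $|\calp^0|=0$. Assume $|\calp^0|=0$, so $\sigma$ is fixed-point-free. In the Leibniz expansion, the bounds defining $\calm(\ua)$ yield, for every $\tau\in\frkS_n$,
\[
\sum_{i=1}^n\ord(b_{i,\tau(i)})\;\geq\;|\ua|-e|M_\tau|,\qquad M_\tau:=\{i:\tau(i)\neq i\},
\]
with equality forcing $\tau(i)=\sigma(i)$ for every $i\in M_\tau$ (by Definition~\ref{def.1.8}(1),(3)) and $i\in\calp^-$ for every fixed point of $\tau$ (since $\calp^0=\emptyset$). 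But $F_\tau:=\{1,\ldots,n\}\setminus M_\tau$ must be $\sigma$-stable, while every $\sigma$-pair either meets $\calp^+$ (admissibility (ii)--(iii)) or is an equal-parity pair disjoint from $\calp^-$; hence $F_\tau=\emptyset$ and $\tau=\sigma$. Consequently every $\tau\neq\sigma$ contributes a term of order strictly larger than $|\ua|-en$: either $|M_\tau|<n$, and the lower bound $|\ua|-e|M_\tau|$ itself exceeds $|\ua|-en$, or $|M_\tau|=n$ and the $\sigma$-equality fails. Therefore $\ord(\det B)=|\ua|-en$, so
\[
\ord(D_B)\;=\;ne+\ord(\det B)\;=\;|\ua|.
\]
Theorem~\ref{th.1.1} with $\xi_B=0$ reads $|\ua|=\ord(D_B)-\ord(\frkD_B)+1$, giving $\ord(\frkD_B)=1$. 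This contradicts the fact that every ramified quadratic extension of a dyadic $F$ has $\ord(\frkD)\in\{2e,2e+1\}\geq 2$. Hence $|\calp^0|=2$, and Definition~\ref{def.1.6}(i) forces its two elements to carry $a$-values of opposite parity; Definition~\ref{def.1.8}(2) then supplies, for every $k\in\ZZ$, some $i_0\in\calp^0$ with $\ord(b_{i_0,i_0})=a_{i_0}\equiv k\pmod 2$.

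\textbf{Main obstacle.} The delicate step is the uniqueness of $\tau=\sigma$ as the leading term in the Leibniz expansion when $|\calp^0|=0$, which rests squarely on the strict inequalities in Definition~\ref{def.1.8}(3) together with the admissibility structure of $\sigma$; once this is established, the rest is routine bookkeeping with Theorem~\ref{th.1.1} and the standard classification of dyadic quadratic discriminants.
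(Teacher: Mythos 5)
Your proof is correct, and it is exactly the expansion of the paper's one-line proof (``The assertion follows from Theorem \ref{th.1.1}''): in both parts the backbone is $|\GK(B)|=\Delta(B)$ combined with reading off $\ord(\det B)$ (or its parity) from the reduced-form conditions via the Leibniz expansion, together with $\ord(b_{i_0,i_0})=a_{i_0}$ for $i_0\in\calp^0$ and the opposite-parity clause of Definition \ref{def.1.6}(i). Two harmless blemishes: the intermediate claim that equality forces every fixed point of $\tau$ to lie in $\calp^-$ is not justified by Definition \ref{def.1.8} (which only bounds diagonal orders from below outside $\calp^-\cup\calp^0$) but is also not needed, since your closing dichotomy ($|M_\tau|<n$, using $\ord(2)\geq 1$, versus $|M_\tau|=n$ with the $\sigma$-equality failing) already disposes of all $\tau\neq\sigma$; and the discriminant valuations of ramified quadratic extensions of a dyadic field form the set $\{2,4,\ldots,2e\}\cup\{2e+1\}$ rather than $\{2e,2e+1\}$, though the inequality $\ord(\frkD_B)\geq 2$ that you actually use is correct.
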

\begin{proof} (1) Let $n$ be odd. By Theorem \ref{th.1.1} and the Definition \ref{def.1.8} (2), we have
\[\mathrm{ord}(\det B)=\sum_{i=1}^n a_i -(n-1) \text{ and } \mathrm{ord}(b_{i_0,i_0})=a_{i_0}.\]
Since $n$ is odd, we have $\calp^0=\{i_0 \}$, and 
$\sum_{1 \le i \le n, i \not=i_0} a_i$ is even.  This proves the assertion (1).

(2) Let $n$ be even. By Definitions \ref{def.1.6} and \ref{def.1.8}, we have $\calp^0=\{i_1,i_2 \}$, and $\ord(b_{i_1,i_1})=a_{i_1} \not\equiv a_{i_2}=\ord(b_{i_2,i_2}) \text{ mod } 2$. Thus, the assertion (2) holds.
\end{proof}

\section{Pre-optimal forms}
Throughout this section and the next, we assume that  $F$ is a finite unramified extension of $\ZZ_2$, and $\frko$ be its ring of integers in $F$. 
 We denote by $S_m(\frko)_e$ (resp. $S_m(\frko)_d$) the set of even integral symmetric (resp. diagonal) matrix of degree $m$ with entries in $\frko.$ For a sequence ${\bf a}=(a_1,\cdots,a_n) \in \ZZ^n,$ put $|{\bf a}|= a_1+\cdots+a_n,$
and ${\bf a}_i=a_i.$  For two sequences ${\bf a}=(a_1,\cdots,a_r)$ and ${\bf b}=(b_1,\cdots,b_s)$ we write ${\bf b} \subset {\bf a}$ if $r \ge s$ and
$b_i=a_i$ for any $i \le s.$

\begin{definition}
\label{def.2.1}
We say that an element $B$ of $\calh_n(\frko)^{\rm nd}$ is a weak canonical form if it satisfies the following conditions:
\begin{itemize}
\item[(1)] $B$ can be expressed as 
$$B=2^{k_1}J_1 \bot \cdots \bot 2^{k_r}J_r$$
with $k_1 \le \cdots \le k_r$, where $J_i \in S_{n_i}(\frko)_d \cap GL_{n_i}(\frko)$ with $1 \le n_i \le 2$  or $J_i \in \frac{1}{2}(S_2(\frko)_e \cap GL_2(\frko))$.
\item[(2)] If $J_i \in S_{n_i}(\frko)_d \cap GL_{n_i}(\frko)$, then $k_i <k_{i+1}$ and for any $j \le i-1$ such that $k_j=k_i$, $J_j$ belongs to  $\frac{1}{2}(S_2(\frko)_e \cap GL_2(\frko))$.
\item[(3)] Let  $J_{j}$ and $J_i$  be diagonal matrices and $J_{j-1},\ldots, J_{i+1} \in \frac{1}{2}(S_2(\frko)_e \cap GL_2(\frko))$ with $j >i$. Suppose that $k_j=k_i + 1$ and that  $\deg(2^{k_1} J_1 \bot \cdots \bot 2^{k_i}J_i)$ and $\mathrm{ord}( \det(2^{k_1} J_1 \bot \cdots \bot 2^{k_i}J_i))$ are even. Then $\xi_{2^{k_1} J_1 \bot \cdots \bot 2^{k_i}J_i}=0$.
\end{itemize}
\end{definition}
The notion of `weak canonical form' is essentially weaker than the `canonical form' in \cite{Wat} though a canonical form is not necessarily a weak canonical form. We say that an element $B$ of $\calh_n(\frko)^{\rm nd}$  has a weak canonical decomposition if there is a weak canonical form $B'$ such that $B \sim B'$.
By using the same argument as in the proof of the main result in \cite{Wat} combined with Theorem 2.4 of \cite{Cho}, we can prove that every $B \in \calh_n(\frko)^{\rm nd}$  has a weak canonical decomposition. See also Section \ref{sec:reduction} for a simple algorithm in the case $F=\QQ_2$.
\begin{definition}
\label{def.2.2}
Let  $B$ be an element of ${\mathcal H}_{n}(\frko)$ such that
$$B=2^{k_1}C_1 \bot \cdots \bot 2^{k_r}C_r $$
with $k_i \ge 0,$ where $C_i$ is a unimodular diagonal matrix or belongs to $\frac{1}{2}(S_2(\frko)_e \cap GL_2(\frko))$.
For a positive integer $j \le r$ put
$B^{[j]}=2^{k_1}C_1 \bot \cdots \bot 2^{k_j}C_j$.  Moreover, for a non-negative integer $m$ put 
$$\cald_m=\{ 1 \le j \le r \ | \ k_j=m \text{ and } C_j \text{ is diagonal} \},$$
and
$$\cale_m=\{ 1 \le j \le r \ | \ k_j=m \text{ and } C_j \in \frac{1}{2}(S_2(\frko)_e \cap GL_2(\frko)) \}.$$
We say that $B$ is pre-optimal if $B$ satisfies the following conditions:
\begin{itemize}
\item[(PO1)] For any non-negative integer $m$ we have
$$\sum_{j \in \cald_m} \deg C_j \le 2$$
and there is an integer $1 \le i \le j$ such that
$$\cale_m=\{i, i+1,\ldots,j-1\}.$$  
Here we make the convention that $\cale_m=\phi$ if $i=j$.
\item[(PO2)] Let $1 \le i < j \le r$.
\begin{itemize}
\item[(1)] Suppose that $i \in  \cald_{m_1}$ and $j \in \cald_{m_2}$. Then $m_1 \le m_2$.
\item[(2)]  Suppose that $i \in  \cale_{m_1}$ and $j \in \cale_{m_2}$. Then $m_1 \le m_2$.
\item[(3)]  Suppose that $i \in  \cald_{m_1}$ and $j \in \cale_{m_2}$. Then $m_1 \le m_2-1$.
\item[(4)]  Suppose that $i \in \cale_{m_1}$ and $j \in \cald_{m_2}$. Then $m_1 \le m_2+1$.
\end{itemize}
\item[(PO3)] If $C_i$ is a diagonal unimodular matrix of degree $2$, then, $i \ge 2$ and  one of the following conditions holds:
\begin{itemize}
\item[(1)] $\deg B^{[i]}$ is even and $\xi_{B^{[i-1]}}=\xi_{B^{[i]}}=0$.
\item[(2)] $\deg B^{[i]}$ is odd  and $\mathrm{ord}(\det B^{[i-1]})+k_i$ is even.
\end{itemize}
\item[(PO4)] Suppose that  $k_i=k_{i-1}-1$, $C_i$ is diagonal, and that  $C_{i-1} \in \frac{1}{2}(S_2(\frko)_e \cap GL_2(\frko))$.
Then $\deg C_i=2$, or $\deg C_i=1$ and   one of the following conditions holds:
\begin{itemize}
\item[(1)] $\deg B^{[i]}$ is even and $\mathrm{ord}(\det B^{[i]})$ is even
\item[(2)] $\deg B^{[i]}$ is odd  and $\xi_{B^{[i-1]}}=0$.
\end{itemize}
\item[(PO5)] Suppose that $C_i$ is diagonal,  $C_{i+1} \in \frac{1}{2} (S_2(\frko)_e \cap GL_2(\frko))$ and that $k_i = k_{i+1}-1$.
Then $\deg C_i=1$, and one of the following conditions holds:
\begin{itemize}
\item[(1)] $\deg B^{[i]}$ is even and $\mathrm{ord}(\det (B^{[i]}))$ is odd.
\item[(2)] $\deg B^{[i]}$ is odd and $\xi_{B^{[i-1]}}\not=0$ if $i \ge 2$.
\end{itemize}
\item[(PO6)] We have $\xi_{B^{[i]}}=0$ if $\deg B^{[i]}$ is even, $C_i$ and $C_{j}$ are unimodular diagonal, $C_{j-1},\ldots, C_{i+1} \in \frac{1}{2}(S_2(\frko)_e \cap GL_2(\frko))$ and $k_j=k_i+1.$ 
\end{itemize}
We call $(2^{k_1}C_1,\cdots,2^{k_r}C_r)$ the set of pre-optimal components of $B$ and denote it  by $\mathbb{POC}(B)$.
\end{definition}

\begin{proposition}
\label{prop.2.1}
Any  element $B$ of ${\mathcal H}_{n}(\frko)^{\rm nd}$ is $GL_n(\frko)$-equivalent to a  pre-optimal form.
\end{proposition}
\begin{proof}
We may suppose that $B$ is a weak canonical form:
\[2^{k_1}J_1 \bot \cdots \bot 2^{k_r} J_r\]
with $k_1 \le \cdots \le k_r$, and $J_i \in S_{n_i} \cap GL_{n_i}(\frko) \ (1 \le n_i \le 2)$ or $J_i \in \frac{1}{2}(S_2(\frko)_e \cap GL_2(\frko)) \ (i=1,\ldots,r)$. By induction on $r$, we prove that there is a 
pre-optimal form $\widetilde B$ which is equivalent to $B$ with $\mathbb{POC}(\widetilde B)=(2^{l_1}C_1,\ldots,2^{l_s}C_s)$ such that $l_s \le k_r$.
  Let $r=1$ and put $B=2^{k_1} J_r$. If $J_1 \in S_1(\frko) \cap GL_1(\frko)$ or $J_1 \in \frac{1}{2}(S_2(\frko)_e \cap GL_2(\frko))$, then $2^{k_1}J_1$ is itself a pre-optimal form with $\mathbb{POC}(B)=(2^{k_1}J_1)$. If $J_1=C_1 \bot C_2$ with $C_i \in 
S_1(\frko) \cap GL_1(\frko)$, then $2^{k_1}C_1 \bot 2^{k_1}C_2$ is a pre-optimal form with $\mathbb{POC}(2^{k_1}C_1 \bot 2^{k_1}C_2)=(2^{k_1}C_1,2^{k_1}C_2)$.

Let $r \ge 2$ and suppose that the assertion holds for any $B' \in \calh_{r'}(\frko)^{\rm nd}$ with $1 \le r' \le r-1$.

(1) Suppose that $\deg J_r =1$, and put $B_1=2^{k_1}J_1 \bot \cdots \bot  2^{k_{r-1}} J_{r-1}$. Then, by the induction assumption, there is a pre-optimal form $\widetilde B_1$ such that $\widetilde B_1$ is equivalent to $B_1$ with $\mathbb{POC}(\widetilde B_1)=(2^{l_1}C_1,\ldots,2^{l_t}C_t)$ such that $l_t \le k_{r-1}$. Put $\widetilde B=\widetilde B_1 \bot 2^{k_r}J_r$.  Then, $2^{k_r}J_r$ and $2^{l_t}C_t$ satisfy the conditions (PO1),(PO2) (1),(4), (PO4),(PO6). Hence $\widetilde B$ is a pre-optimal form with $\mathbb{POC}(\widetilde B)=(\mathbb{POC}(\widetilde B_1),2^{k_r}J_r)$ satisfying the required condition.

(2) Suppose that $J_r=u_1 \bot u_2$ with  $u_1,u_2 \in \frko^{\times}$, and put $B_1=2^{k_1}J_1 \bot \cdots \bot  2^{k_{r-1}} J_{r-1}$. Then, by the induction assumption, 
there is a pre-optimal form $\widetilde B_1$ such that $\widetilde B_1$ is equivalent to $B_1$ with $\mathbb{POC}(\widetilde B_1)=(2^{l_1}C_1,\ldots,2^{l_t}C_t)$ such that $l_t \le k_{r-1}$. 

(2.1) Suppose that either $n$ is even and $\xi_{B_1}=\xi_B=0$, or $n$ is odd and $\mathrm{ord}(\det B_1)+k_r$ is even.  Put $\widetilde B:=\widetilde B_1 \bot 2^{k_r}J_r$. Then, $2^{k_r}J_r$ and $2^{l_t}C_t$ satisfy the conditions (PO1),(PO2) (1),(4), (PO3),(PO4),(PO6). Hence $\widetilde B$ is a pre-optimal form with $\mathbb{POC}(\widetilde B)=(\mathbb{POC}(\widetilde B_1),2^{k_r}J_r)$ satisfying the required condition.

(2.2) Suppose that $B$ does not satisfy the condition in (2.1).  Put $\widetilde B:=\widetilde B_1 \bot 2^{k_r}u_1 \bot 2^{k_r}u_2$. Then, $2^{k_r}u_1,2^{k_r}u_2$ and $2^{l_t}C_t$ satisfy  the conditions (PO1),(PO2) (1),(4), (PO4),(PO6).
Hence $\widetilde B$ is a pre-optimal form with $\mathbb{POC}(\widetilde B)=(\mathbb{POC}(\widetilde B_1),2^{k_r}u_1, 2^{k_r}u_2)$ satisfying the required condition.

 (3) Suppose that $J_r=\bot_{i=1}^{m}  K_i$ with  $K_i \in \frac{1}{2} (S_2(\frko)_e \cap GL_2(\frko))$, and put
 $B_1=2^{k_1} J_1 \bot \cdots 2^{k_{r-m-1}}J_{r-m-1}$. 
Then, by the induction assumption,  there is a pre-optimal form $\widetilde B_1$ such that $\widetilde B_1$ is equivalent to $B_1$ with $\mathbb{POC}(\widetilde B_1)=(2^{l_1}C_1,\ldots,2^{l_t}C_t)$ such that $l_t \le k_{r-m}$. 

(3.1) Suppose that $k_r =k_{r-m}+1$ and $J_{r-m}=u$ with  $u \in \frko^{\times}$. Then $2^{l_t}C_t=2^{k_{r-m}}J_{r-m}$.

(3.1.1)  Suppose that  one of the following conditions holds
\begin{itemize}
\item[(a)] $\deg  B^{[r-m]}$ is even and $\mathrm{ord}(\det B^{[r-m]})$ is even.
\item[(b)] $r \ge m+2, \ \deg B^{[r-m-1]}$ is odd,  and $\xi_{B^{[r-m-1]}} =0$. 
\end{itemize}
Put $\widetilde B:=\widetilde B_1 \bot \bot_{i=1}^{m}  2^{k_r}K_i \bot 2^{k_{r-m}}J_{r-m}$. Then, $2^{k_{r-m}}J_{r-m}$
and $K_m$ satisfy the conditions (PO1),(PO2) (1),(4), (PO4),(PO6), and $2^{k_r}K_1$ and $2^{l_{t-1}}C_{t-1}$ satisfy
the conditions  (PO1),(PO2) (2),(3), (PO5). Hence $\widetilde B$ is a pre-optimal form with \\
$\mathbb{POC}(\widetilde B)=(\mathbb{POC}(\widetilde B_1), 2^{k_r}K_1,\ldots,2^{k_r}K_m, 2^{k_{r-m}}J_{r-m})$\\ satisfying the required condition.
 
(3.1.2) Suppose that $B$ does not satisfy the condition in (3.1.1).  Then, in a way similar to above, we can prove that\\
 $\widetilde B=\widetilde B_1 \bot 2^{k_{r-m}} J_{r-m} \bot  \bot_{i=1}^{m}  2^{k_r}K_i$  is a pre-optimal form with \\
$\mathbb{POC}(\widetilde B)=(\mathbb{POC}(\widetilde B_1), 2^{k_{r-m}}J_{r-m},2^{k_r}K_1,\ldots,2^{k_r}K_m)$\\ satisfying the required condition.

(3.2) Suppose that $k_r =k_{r-m}+1, \ J_{r-m}=u_1 \bot u_2$ with  $u_1,u_2 \in \frko^{\times}$.

(3.2.1)  Suppose that one of the following conditions holds
\begin{itemize}
\item[(a)] $r=m+1$.
\item[(b)] $r \ge m+2$, $\deg B^{[r-m-1]}$ is even, and $\xi_{B^{[r-m-1]}} \not=0$.
\item[(c)] $\deg B^{[r-m-1]}$ is odd, and  $\mathrm{ord}(\det (B^{[r-m-1]})+ k_{r-m}$ is odd.
\end{itemize}
Put $\widetilde B:=\widetilde B_1  \bot  2^{k_{r-m}}u_1 \bot \bot_{i=1}^{m}  2^{k_r}K_i  \bot 2^{k_{r-m}}u_2$. Then 
$2^{k_{r-m}}u_2$ and $2^{k_r}K_m$ satisfy the conditions (PO1),(PO2) (1),(4),(PO4),(PO6), and  $2^{k_r}K_1$ and 
$2^{k_{r-m}}u_2$ satisfy the conditions (PO1),(PO2) (2),(3), (PO5). Hence $\widetilde B$ 
 is a pre-optimal form with \\
$\mathbb{POC}(\widetilde B)=(\mathbb{POC}(\widetilde B_1),2^{k_{r-m}}u_1, 2^{k_r}K_1,\ldots,2^{k_r}K_m, 2^{k_{r-m}}u_2)$ satisfying the required condition.

(3.2.2) Suppose that one of the following conditions holds
\begin{itemize}
\item[(a)] $r \ge m+2$, $\deg B^{[r-m-1]}$ is even, and $\xi_{B^{[r-m-1]}} =\xi_{B^{[r-m]}}=0$.
\item[(b)] $\deg B^{[r-m-1]}$ is odd, and  $\mathrm{ord}(\det (B^{[r-m-1]})+ k_{r-m}$ is even.
\end{itemize}
Put $\widetilde B:=\widetilde B_1  \bot  \bot_{i=1}^{m}  2^{k_r}K_i \bot 2^{k_{r-m}}J_{r-m}$. Then, $2^{k_{r-m}}J_{r-m}$ and $2^{k_r}K_m$ satisfy the conditions (PO1),(PO2) (1),(4), (PO3),(PO6). Hence $\widetilde B$ is a pre-optimal form with $\mathbb{POC}(\widetilde B)=(\mathbb{POC}(\widetilde B_1), 2^{k_r}K_1,\ldots,2^{k_r}K_m, 2^{k_{r-m}}J_{r-m})$\\ satisfying the required condition.

(3.2.3) Suppose that $r \ge m+2$, $\deg B^{[r-m-1]}$ is even, $\xi_{B^{[r-m-1]}} =0$, and $\xi_{B^{[r-m]}}\not=0$.
Then, in a way similar to above, we can prove that   
$\widetilde B=\widetilde B_1 \bot \bot_{i=1}^{m}  2^{k_r}K_i  \bot  2^{k_{r-m}}u_1 \bot 2^{k_{r-m}}u_2$ is a pre-optimal form with \\
$\mathbb{POC}(\widetilde B)=(\mathbb{POC}(\widetilde B_1), 2^{k_r}K_1,\ldots,2^{k_r}K_m, 2^{k_{r-m}}u_1,2^{k_{r-m}}u_2)$\\ satisfying the required condition.

(3.3)  Suppose that $k_r \ge k_{r-m}+2$, or that
$k_r=k_{r-m}+1$ and $J_{r-m} \in \frac{1}{2}(S_2(\frko)_e \cap GL_2(\frko))$. 
Put $\widetilde B=\widetilde B_1 \bot \bot_{i=1}^{m}  2^{k_r}K_i$. Then $2^{k_r}K_1$ and $2^{l_t}C_t$ satisfy the conditions (PO1),(PO2) (2),(3), (PO5). Hence $\widetilde B$  is a pre-optimal form with  $\mathbb{POC}(\widetilde B)=(\mathbb{POC}(\widetilde B_1), 2^{k_r}K_1,\ldots,2^{k_r}K_m)$\\ satisfying the required condition.

\end{proof}

Let
$$B=2^{k_1}C_1 \bot \cdots \bot 2^{k_r}C_r $$
be a pre-optimal form, and put $B^{[j]}=2^{k_1}C_1 \bot \cdots \bot 2^{k_j}C_j$ as stated above. For $i$ let $l(i)$ be the least integer such that $i \le \deg B^{[l(i)]}.$

\begin{example}
\label{exp.2.1}  From now on for $i=1,2,3,4$ let $u_i \in \frko^{\times}$ and $K_i \in \frac{1}{2}(GL_2(\frko) \cap S_2(\frko)_e)$.\\
(I) The following (1) $\sim$ (6) are weak canonical forms.
\begin{itemize}
\item[(1)] $B=2^{k_1}K_1 \bot 2^{k_2}K_2$ with $k_1 \le k_2$ is a pre-optimal form and $\mathbb{POC}(B)=(2^{k_1}K_1, 2^{k_2}K_2)$.
\item[(2)] $B=2^{k_1}K_1 \bot 2^{k_2}u_2 \bot 2^{k_3}u_3$ with $k_1 \le k_2 \le k_3$ is a pre-optimal form and $\mathbb{POC}(B)=(2^{k_1}K_1, 2^{k_2}u_2,2^{k_2}u_3)$.
\item[(3)] $B=2^{k_1}u_1 \bot 2^{k_2}K_2 \bot 2^{k_3}u_3$ with $k_1 < k_2 \le k_3$ is a pre-optimal form and $\mathbb{POC}(B)=(2^{k_1}u_1, 2^{k_2}K_2,2^{k_3}u_3)$.
\item[(4)] $B=2^{k_1}u_1 \bot 2^{k_2}u_2 \bot 2^{k_3}K_3$ with $k_1 \le k_2 < k_3$. \\
\begin{itemize}
\item[(4.1)] Suppose that $k_3 \ge k_2+2$ or $k_3=k_2+1$ and $k_1+k_2$ is odd. Then, $B$ is pre-optimal and
 $\mathbb{POC}(B)=(2^{k_1}u_1, 2^{k_2}u_2,2^{k_3}K_3)$.
\item[(4.2)] Suppose that  $k_3=k_2+1$ and $k_1+k_2$ is even. Then, $B$ is not pre-optimal.
 But $B'=2^{k_1}u_1 \bot  2^{k_3}K_3 \bot 2^{k_2}u_2$ is equivalent to $B$ and pre-optimal, and 
 $\mathbb{POC}(B')= (2^{k_1}u_1, 2^{k_3}K_3, 2^{k_2}u_2)$.
\end{itemize}
\item[(5)] $B=2^{k_1}u_1 \bot 2^{k_2}u_2 \bot 2^{k_3}u_3 \bot 2^{k_4}u_4$ with $k_1 \le k_2, \ k_2+2 \le  k_3 \le k_4 $ is pre-optimal. \\
\begin{itemize}
\item[(5.1)] Suppose that $k_3=k_4$ and $\xi_{B^{(2)}}=\xi_B=0$. Then  
 $\mathbb{POC}(B)=(2^{k_1}u_1, 2^{k_2}u_2,2^{k_3} u_3 \bot 2^{k_3}u_4)$.
\item[(5.2)] Suppose that  $B$ does not satisfy the condition in (5.1). Then, 
 $\mathbb{POC}(B)= (2^{k_1}u_1, 2^{k_2}u_2, 2^{k_3}, 2^{k_4}u_4)$.
\end{itemize}
\item[(6)] $B=2^{k_1}u_1 \bot 2^{k_2}u_2 \bot 2^{k_3}u_3 \bot 2^{k_4}u_4$ with $k_1 \le k_2= k_3 < k_4 $ is pre-optimal. \\
\begin{itemize}
\item[(6.1)] Suppose that $k_1+k_2$ is even.  Then  
 $\mathbb{POC}(B)=(2^{k_1}u_1, 2^{k_2}u_2 \bot 2^{k_3} u_3,2^{k_4}u_4)$.
\item[(6.2)] Suppose that  $k_1+k_2$ is odd.  Then, 
 $\mathbb{POC}(B)= (2^{k_1}u_1, 2^{k_2}u_2, 2^{k_3}u_3, 2^{k_4}u_4)$.
\end{itemize}
\end{itemize}
(II) Let $B=2^{k_1}u_1 \bot 2^{k_2}u_2 \bot 2^{k_3}u_3 \bot 2^{k_4}u_4$ with $k_1 \le k_2, \ k_2+1 = k_3 \le k_4 $
\begin{itemize}
\item[(II.1)] Suppose that $\xi_{B^{(2)}} =0$. Then $B$ is weak-canonical and pre-optimal, and the same property  as (5) holds.
\item[(II.2)] Suppose that $\xi_{B^{(2)}} \not=0$. Then $B$ is neither weak-canonical nor  pre-optimal. But there are $u_1',u_2' \in \frko^{\times}$ such that $2^{k_1}u_1' \bot 2^{k_2}u_2' \bot 2^{k_3}u_3 \bot 2^{k_4}u_4$ is equivalent to $B$ and satisfies the same condition as (II.1).
\end{itemize}
\end{example}

\begin{lemma} 
\label{lem.2.1}
 Let 
$$B=2^{k_1}C_1 \bot \cdots \bot 2^{k_r}C_r $$
be a pre-optimal form in ${\mathcal H}_n(\frko)$ as above. Let 
$$B_1=2^{k_1}C_1 \bot \cdots \bot 2^{k_{r-1}}C_{r-1}$$
and
$$B_2= 2^{k_r}C_r.$$
Then we have  the following. 
\begin{itemize}
\item[(1)] We have
$$(\GK(B)_1,\cdots,\GK(B)_{n_1}) \preceq \GK(B_1).$$
\item[(2)] If there are integers $b_{n_1+1},\cdots,b_{n_1+n_2}$ such that \\
$(\GK(B_1),b_{n_1+1},\cdots,b_{n_1+n_2}) \in \bfS(\{B\}),$
then 
$$\GK(B_1)=(\GK(B)_1,\cdots,\GK(B)_{n_1}).$$

\end{itemize}
\end{lemma}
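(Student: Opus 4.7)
Part~(2) is a formal consequence of~(1) via a lex-order argument: if $\beta:=(\GK(B_1),b_{n_1+1},\ldots,b_{n_1+n_2})\in\bfS(\{B\})$, then $\beta\preceq\GK(B)$ by the maximality of $\GK(B)$ in $\bfS(\{B\})$, and passing to length-$n_1$ prefixes yields $\GK(B_1)\preceq(\GK(B)_1,\ldots,\GK(B)_{n_1})$; combined with~(1), both lex inequalities on length-$n_1$ prefixes are equalities, forcing $\GK(B_1)=(\GK(B)_1,\ldots,\GK(B)_{n_1})$.

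For~(1), write $\ua=\GK(B)=(a_1,\ldots,a_n)$. The strategy is to show $(a_1,\ldots,a_{n_1})\in\bfS(\{B_1\})$, after which the maximality of $\GK(B_1)$ in $\bfS(\{B_1\})$ immediately yields $(a_1,\ldots,a_{n_1})\preceq\GK(B_1)$. Choose $U\in\GL_n(\frko)$ satisfying $\ua\in S(B[U])$, which exists since $B$ is $\GL_n(\frko)$-equivalent to an optimal form, and decompose $U=\begin{pmatrix}U_{11}&U_{12}\\U_{21}&U_{22}\end{pmatrix}$ relative to $n=n_1+n_2$. Restricting the $S$-conditions to the upper-left block yields $(a_1,\ldots,a_{n_1})\in S((B[U])^{(n_1)})$, where $(B[U])^{(n_1)}=U_{11}^{\,t}B_1U_{11}+U_{21}^{\,t}B_2U_{21}$.

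Theorem~\ref{th.1.2} allows us to replace $U$ by $UV$ for any $V\in G_{\ua}$ while preserving $\ua\in S(B[UV])$. Because the defining inequalities of $G_{\ua}$ impose no condition on entries with row index $>n_1$ and column index $\leq n_1$ (since $a_i\geq a_j$ there), we may take $V=\begin{pmatrix}I&0\\V_{21}&I\end{pmatrix}$ and choose $V_{21}\in\mathrm{M}_{n_2,n_1}(\frko)$ so that $(UV)_{11}=U_{11}+U_{12}V_{21}$ has unit determinant; this is possible because the first $n_1$ rows of $U$ span a rank-$n_1$ subspace modulo~$\frkp$, so an affine expression of the form $\bar U_{11}+\bar U_{12}\bar V_{21}$ can be made invertible in $\GL_{n_1}(\frkk)$ and then lifted. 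The upper-left block of $B[UV]$ then splits as $B_1[(UV)_{11}]+E$ with $E:=(UV)_{21}^{\,t}B_2(UV)_{21}$ having entries in $2^{k_r}\frko$. The main remaining obstacle is to descend the $S$-conditions from $B_1[(UV)_{11}]+E$ to $B_1[(UV)_{11}]$: this is immediate when $a_{n_1}\leq k_r$, since then $E$ contributes only terms of order at least $k_r$ and leaves all $S$-inequalities intact, but the delicate case $a_{n_1}>k_r$ requires a finer argument exploiting the pre-optimal conditions~(PO1)--(PO6) to absorb $E$ into a further adjustment within $\GL_{n_1}(\frko)$ without leaving $\bfS(\{B_1\})$.
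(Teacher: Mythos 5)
Your proof of part (2) coincides with the paper's: maximality of $\GK(B)$ in $\bfS(\{B\})$ gives $(\GK(B_1),b_{n_1+1},\ldots,b_{n_1+n_2})\preceq\GK(B)$, passing to length-$n_1$ prefixes gives $\GK(B_1)\preceq(\GK(B)_1,\ldots,\GK(B)_{n_1})$, and combining with (1) forces equality. That part is fine.

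The problem is part (1), which is the only non-formal content of the lemma. The paper disposes of it by citing [IK1, Lemma~1.2], a general statement about orthogonal summands $B=B_1\perp B_2$ that requires no pre-optimality. You instead attempt a direct proof and explicitly leave it unfinished: after reducing to the claim that $(a_1,\ldots,a_{n_1})\in S\bigl(B_1[(UV)_{11}]\bigr)$ given that it lies in $S\bigl(B_1[(UV)_{11}]+E\bigr)$ with $E=(UV)_{21}^{\,t}B_2(UV)_{21}$, you handle only the case $a_{n_1}\le k_r$ and defer the case $a_{n_1}>k_r$ to ``a finer argument exploiting (PO1)--(PO6)'' that you never supply. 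That case is not vacuous for pre-optimal forms: take $B=u_1\perp 2K\perp u_2$ with $K\in\tfrac12(S_2(\frko)_e\cap\GL_2(\frko))$ and $\ord(\det B)$ even; this is pre-optimal by (PO4), $B_1=u_1\perp 2K$ has $\GK(B_1)=(0,1,1)$, so $a_{n_1}=1$ while $k_r=0$, and $E$ then has entries whose order may be $0<a_{n_1}$, so it cannot simply be discarded and no mechanism for ``absorbing'' it is given. Hence the proof is incomplete exactly where the lemma has substance. A further caution: an argument that genuinely needs (PO1)--(PO6) at this point would at best establish a pre-optimal-specific statement, whereas the fact the paper invokes holds for arbitrary orthogonal decompositions; if you want a self-contained proof of (1), you should either reproduce the argument of [IK1, Lemma~1.2] or find a decomposition-free route, rather than patching the error term case by case.
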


\begin{proof}
The first assertion follows from \cite[Lemma 1.2]{IK1}. 
Suppose that there are integers $b_{n_1},\cdots,b_n$ 
and $U \in GL_n(\frko)$ such that $(\GK(B_1),b_{n_1},\cdots,b_n) \in S(B[U]).$
Then we have  $(\GK(B_1),b_{n_1+1},\cdots,b_{n}) \preceq (a_1,\cdots,a_n),$ and in particular $\GK(B_1) \preceq (a_1,\cdots,a_{n_1}).$
This proves the assertion. 
\end{proof}

\begin{theorem}
\label{th.2.1} Let
 $$2^{k_1}C_1 \bot \cdots \bot 2^{k_r}C_r$$
be a pre-optimal form in $\calh_n(\frko)$, and $B_1 =  2^{k_1} C_1 \bot \cdots \bot 2^{k_{r-1}}C_{r-1}$. 
Put $n_1=\deg B_1$, and  $m=k_j$ or $m=k_j-2$ according as $C_j$ is unimodular diagonal or not. Suppose that the following conditions hold
\begin{itemize}
\item[(1)] There are integers $b_{n_1+1},\ldots,b_n$ such that 
$(\GK(B_1),b_{n_1+1},\ldots,b_n) \in \bfS(\{B\})$
\item[(2)] $a_{n_1+1} \le m +2$. 
\end{itemize}
Then there is an optimal  basis $\{u_i \}_{1 \le i \le n}$ of  $L_B$ such that
$\{u_i \}_{1 \le i \le n_1}$ is an optimal basis of $L_{B_1}$.
\end{theorem}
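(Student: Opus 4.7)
The proof plan is as follows. Writing $\ua=(a_1,\ldots,a_n)=\GK(B)$, I first observe that hypothesis~(1) together with Lemma~\ref{lem.2.1}(2) forces $\GK(B_1)=(a_1,\ldots,a_{n_1})$. Since $B_1$ is $\GL_{n_1}(\frko)$-equivalent to an optimal form, I fix an optimal basis $\{u_1,\ldots,u_{n_1}\}$ of $L_{B_1}$ whose Gram matrix $B'_1\in\calh_{n_1}(\frko)$ is optimal with $\GK(B'_1)=(a_1,\ldots,a_{n_1})$. What remains is to complete $\{u_1,\ldots,u_{n_1}\}$ to a basis of $L_B$ whose Gram matrix is optimal with $\GK$-invariant $\ua$.

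Let $\{\psi_i\}_{1\le i\le n}$ be the basis of $L_B$ realising the pre-optimal decomposition $B=B_1\bot B_2$, so that $\psi_{n_1+1},\ldots,\psi_n\in L_{B_2}$ and $L_{B_1}\bot L_{B_2}$ inside $L_B$. Any extension of the form $u_{n_1+j}=\psi_{n_1+j}+w_j$ with $w_j\in L_{B_1}$ yields a basis of $L_B$ whose first $n_1$ vectors are still the chosen optimal basis of $L_{B_1}$. Writing $w_j=\sum_{i=1}^{n_1}c_{ij}u_i$ and $C=(c_{ij})\in\Mat_{n_1,n_2}(\frko)$ with $n_2:=\deg B_2$, a direct computation (using $B=B_1\bot B_2$ in the $\psi$-basis) produces the block form
\[
M=\begin{pmatrix} B'_1 & B'_1 C \\ C^T B'_1 & C^T B'_1 C+B_2 \end{pmatrix}
\]
for the Gram matrix of $\{u_1,\ldots,u_n\}$. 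Since $M\sim B$, we have $\GK(M)=\ua$ automatically, so it suffices to find some $C$ rendering $\ua\in S(M)$, equivalently making $M$ optimal.

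The required conditions split block by block. The top-left block is settled by optimality of $B'_1$. The off-diagonal blocks require
\[
\ord\bigl(2(B'_1C)_{ij}\bigr)\ge\tfrac{1}{2}(a_i+a_{n_1+j}),\qquad 1\le i\le n_1,\;1\le j\le n_2,
\]
and the bottom-right block requires that the entries of $C^T B'_1 C+B_2$ have orders dictated by $(a_{n_1+1},\ldots,a_n)$. Hypothesis~(2), namely $a_{n_1+1}\le m+2$ with $m=k_r$ when $C_r$ is unimodular diagonal and $m=k_r-2$ otherwise, ensures that the gap between these required orders and the orders naturally carried by the entries of $B_2$ (roughly $k_r$) is at most $2$. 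The construction of $C$ thus reduces to a Hensel-style lifting problem inside $L_{B_1}$: given the unit datum from $B_2$, find $w\in L_{B_1}$ whose diagonal value $(w,w)_{B_1}/2$ has the prescribed leading term modulo $\vpi^{a_{n_1+1}+1}$.

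The main obstacle is verifying the existence of such a $w\in L_{B_1}$ in every case. This is precisely what the pre-optimal axioms (PO1)--(PO6) are calibrated to guarantee: they encode, via conditions on $\xi_{B^{[i]}}$, $\ord(\det B^{[i]})$ and the parities of the $k_i$, which diagonal values are actually represented by sublattices of $L_{B_1}$. The verification thus becomes a finite case analysis on (i) the type and degree of $C_r$, (ii) the gap $a_{n_1+1}-m\in\{0,1,2\}$, and (iii) the local structure of $B_1$ at its top block --- notably whether $k_{r-1}=k_r$, $k_r-1$, or smaller, and whether $C_{r-1}$ is diagonal or in $\tfrac12(S_2(\frko)_e\cap\GL_2(\frko))$. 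In each sub-case the required $w_j$ are produced as explicit linear combinations of the $u_i$, with small Hensel-type modifications when the $a_{n_1+j}$ strictly exceed the natural orders of the entries of $B_2$.
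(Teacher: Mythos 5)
Your opening reduction is fine: hypothesis (1) together with Lemma \ref{lem.2.1} does force $\GK(B_1)=(a_1,\ldots,a_{n_1})$, and if you can exhibit a matrix $C$ making your block matrix $M$ optimal, the theorem follows. The difficulty is that this reformulation is essentially a restatement of the theorem, and the step carrying all of its content --- producing the vectors $w_j\in L_{B_1}$ --- is deferred to a ``finite case analysis'' that is never executed. That existence question is not a routine Hensel lift. In the extreme case $a_{n_1+1}=m+2$ you must find $w\in L_{B_1}$ with $Q(w)\equiv -(B_2)_{jj}$ modulo $\vpi^{a_{n_1+1}}$, hence with $\ord(Q(w))=m=a_{n_1+1}-2$, while simultaneously $\ord\bigl((u_i,w)\bigr)\ge (a_i+a_{n_1+1})/2$ for every $i\le n_1$ (and, when $\deg C_r=2$, with the cross-terms $(w_j,w_k)$ also controlled). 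Whether such a $w$ exists depends on fine arithmetic of $B_1$ --- the parities of $\ord(b_{ii})$ at fixed points of the admissible involution, the invariants $\xi_{B^{[i]}}$, and so on --- which is exactly the content of Proposition \ref{prop.1.1} and Lemma \ref{lem.3.1}; moreover those statements are formulated for \emph{reduced} forms, not for an arbitrary optimal basis of $L_{B_1}$, so you would need to prove their analogues in your setting. As it stands the central step is missing.

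The paper sidesteps the representation problem entirely by arguing in the opposite direction. It starts from an optimal basis $\{\psi_i\}$ of $L_B$ (which exists by definition of $\GK$), writes $\psi_i=\phi_i+c_i\widetilde\phi_n$ with $\phi_i\in L_{B_1}$, takes the largest $i_0$ with $c_{i_0}\in\frko^{\times}$, and replaces $\phi_i$ by $\phi_i-c_{i_0}^{-1}c_i\phi_{i_0}$ for $i<i_0$. The resulting vectors lie in $L_{B_1}$ and differ from $\psi_i+d_i\psi_{i_0}$ only by elements of $2L_{B_2}$, so every error term in the pairings has order at least $m+3$, which exceeds the thresholds $(a_i+a_j)/2\le m+2$ precisely by hypothesis (2). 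Only soft estimates are needed; no value has to be represented, and no appeal to (PO1)--(PO6) is required in the proof itself. If you wish to salvage the bottom-up construction, you must actually carry out, for each configuration permitted by the pre-optimality axioms, the existence proof for $w_j$; until then the proposal has a genuine gap at its key step.
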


\bigskip

\begin{proof}
Let  $\GK(B)=(a_1,\cdots,a_n)$. Then by the assumption (1) and Lemma \ref{lem.2.1}, we have $\GK(B_1)=(a_1,\ldots,a_{n_1})$. 
We note that we have $a_{n_1} \le m+2$  by the assumption (2). 
Let 
$\{\psi_1,\cdots,\psi_n\}$ be  an optimal  basis of $L_B.$ 
First suppose that  
$B_2=2^k \epsilon.$ Then $m=k.$
Let $\widetilde \phi_n$ be a basis of $L_{B_2}.$ 
For $i=1,\cdots,n,$ write $\psi_i=\phi_i +  c_{i}\widetilde \phi_{n}$ with $c_i \in \frko,$ where
$\phi_i$ is an element $L_{B_1}.$ Then there is an integer $1 \le i \le n$ such that $c_i \in \frko^*.$ Take the greatest integer $i_0$ satisfying 
such a condition, and for $1 \le i \le n$  such that $i \not=i_0$ put 
$$\phi_i'=\begin{cases}
\phi_i-c_{i_0}^{-1}c_i\phi_{i_0} & \text{ if } i \le i_0-1 \\
\phi_{i} & \text{ if }  i \ge i_0+1,
\end{cases}$$
and $\Phi'=\{ \phi_i ' \ (1 \le i \le n, i \not= i_0) \}$.  Then $\Phi'$ forms a basis of $L_{B_1},$ and can be expressed as
$$\phi_i'=\psi_i+d_i\psi_{i_0}+2e_i\widetilde \phi_n$$
with some $d_i,e_i \in \frko$ such that $d_i=0$ if $i > i_0.$
 We note that
\begin{align*}
&(\phi_i',\phi_j')+4e_ie_j(\widetilde \phi_n,\widetilde \phi_n) \\
&=(\psi_i,\psi_j)+d_i(\psi_{i_0},\psi_j)+d_j(\psi_{i_0},\psi_i)+d_id_j(\psi_{i_0},\psi_{i_0})
\end{align*}
and 
$${\rm ord}(4e_ie_j(\widetilde \phi_n,\widetilde \phi_n)) \ge m+3$$
for any $1 \le i,j \le n$ such that $i \not=i_0, j \not=i_0.$
Hence we have 
$${\rm ord}(2^{-\delta_{i,j}}(\phi_i',\phi_j')) \ge (a_i+a_j)/2$$
 for any $1 \le i,j \le n$ such that $i \not=i_0$ and $j \not=i_0$. 
This implies that the sequence $(b_1,\ldots,b_{n-1})$ defined by
$$b_i=\begin{cases} a_i & \text{ if } 1 \le i \le i_0-1\\
a_{i+1} & \text{ if } i_0 \le i \le n-1
\end{cases}$$
belongs to $S(\Phi')$, and hence we have $(b_1,\cdots,b_{n-1}) =\GK(B_1)$
remarking that $(a_1,\ldots,a_{n-1}) \preceq (b_1,\ldots,b_{n-1})$.
Put $\Phi=\Phi' \cup \{\psi_n \}$. Then $\Phi$ forms a basis of $L_B$, and  since we have  $a_n \le k_r+2$,  
we easily see that 
$$\mathrm{ord}((\phi_i',\psi_n)) \ge (a_n+a_i)/2$$
for any $1 \le i \le n$ such that $i \not=i_0$.
Hence we have $(a_1,\ldots,a_n) \in S(\Phi)$, and $\Phi$ is an optimal basis of $L_B$.
This implies that $\Phi$ satisfies the required property.

Suppose that $B_2=2^kC$ with $\deg C=2.$ Then $m=k$ or $m=k-2$ according as $C$ is  unimodular diagonal or not.
Let $\widetilde \phi_{n1},\widetilde \phi_{n2}$ be a basis of $L_{B_2}$. 
Then by using the same argument as above, we can show that 
there are two integers $1 \le i_1 <i_2 \le n$ and a basis $\phi_{i}' \ (i \not=i_1,i_2)$  of $L_{B_1}$ such that
$$\phi_i'=\psi_i+d_{i1}\psi_{i_1}+d_{i2}\psi_{i_2}+2e_{i1}\widetilde \phi_{n1}+2e_{i2}\widetilde \phi_{n2},$$
where $d_{ij} ,e_{ij} \in \frko$ such that $d_{i1}=0$ if $i >i_1$ and $d_{i2}=0$ if $i >i_2.$ 
Put $\Phi'=\{ \phi_i' \ (1 \le i \le n, \ i \not=i_1,i_2)\} $ and 
$$b_i=\begin{cases}
a_i & \text{ if } 1 \le i \le i_1-1 \\
a_{i+1} & \text{ if } i_1 \le i \le i_2-1 \\
a_{i+2} & \text{ if } i_2 \le i \le n-1.
\end{cases}$$
We note that
\begin{align*}
&(\phi_i',\phi_j')+4\{e_{i1}e_{j1}(\widetilde \phi_{n1},\widetilde \phi_{n1})+(e_{i1}e_{j2}+e_{i2}e_{j1})(\widetilde \phi_{n1},\widetilde \phi_{n2})+e_{i2}e_{j2}(\widetilde \phi_{n},\widetilde \phi_{n2})\}\\
&=(\psi_i,\psi_j)+d_{i1}(\psi_{i_1},\psi_j)+d_{i2}(\psi_{i_2},\psi_j)+ d_{j1}(\psi_{i_1},\psi_i)+d_{j2}(\psi_{i_2},\psi_i)\\
&+d_{i1}d_{j1}(\psi_{i_1},\psi_{i_1})
+(d_{i1}d_{j2}+d_{j1}d_{i2})(\psi_{i_1},\psi_{i_2})+d_{i2}d_{j2}(\psi_{i_2},\psi_{i_2})
\end{align*}
and
$${\rm ord}(4\{e_{i1}e_{j1}(\widetilde \phi_{n1},\widetilde \phi_{n1})+(e_{i1}e_{j2}+e_{i2}e_{j1})(\widetilde \phi_{n1},\widetilde \phi_{n2})+e_{i2}e_{j2}(\widetilde \phi_{n},\widetilde \phi_{n2})\}) \ge m+3$$
for any $1 \le i,j \le n$ such that $i \not=i_1, i_2, j \not=i_1,i_2.$
Put $\Phi=\Phi' \cup \{\psi_{n-1},\psi_n \}$.
Then similarly to above we can show that 
$${\rm ord}(2^{-\delta_{i,j}}(\phi_i',\phi_j')) \ge (a_i+a_j)/2$$
 for any $1 \le i,j \le n$ such that $i \not=i_1,i_2, j \not=i_1,i_2$,
$$\mathrm{ord}((\phi_i',\psi_j)) \ge (a_i+a_j)/2$$
for any $1 \le i \le n-2, n-1 \le j \le $, and
$$\mathrm{ord}(2^{-\delta_{ij}}(\psi_i,\psi_j) \ge (a_i+a_j)/2$$
for any $n-1 \le i,j \le n$.
Thus, by using the same argument as above, we can prove the assertion.

\end{proof}


\bigskip
Let $B \in {\mathcal H}_n(\frko).$
Let $n$ be odd. Then we  recall that 
$$\Delta(B)=\mathrm{ord}(\det B)+n-1.$$
Let $n$ be even. Then we remark that 
$$\Delta(B)=\left\{\begin{array}{ll}
\mathrm{ord}(\det B)+n-2 & \ {\rm if} \ \mathrm{ord}(\det B) \equiv 1 \ {\rm mod} \ 2 \\
\mathrm{ord}(\det B)+n-1 & \ {\rm if} \ \mathrm{ord}(\det B) \equiv 0 \ {\rm mod} \ 2 \ {\rm and } \ \xi_B=0 \\
\mathrm{ord}(\det B)+n & \ {\rm if} \  \ \xi_B\not=0.
\end{array}
\right.$$
Hence, we easily obtain:

\bigskip

\begin{lemma}
\label{lem.2.2}
 Let $B=2^{k_1}C_1 \bot \cdots \bot 2^{k_{r}}C_{r}$ 
be a pre-optimal form in ${\mathcal H}_n(\frko).$
Put $B_1 = 2^{k_1}C_1 \bot \cdots \bot 2^{k_{r-1}}C_{r-1},$ 
and $B_2 = 2^{k_r}C_r.$ 
\begin{itemize}
\item [(1)] Let  $n_2=1.$
\begin{itemize}
\item [(1.1)] Let $n_1$ be even. Then
$$\Delta(B)-\Delta(B_1)=
\left\{ \begin{array}{ll}
k_r+2  & \ {\rm if} \ \mathrm{ord}(\det B_1) \ {\rm is \ odd}\\
k_r +1   &    \ {\rm if} \mathrm{ord}(\det B_1) \ {\rm is \ even \ and } \ \xi_{B_1}=0\\
k_r & \ {\rm if} \ \xi_{B_1} \not=0.
\end{array} \right.$$
\item[(1.2)] Let $n_1$ be odd. Then
$$\Delta(B)-\Delta(B_1)=
\left\{ \begin{array}{ll}
k_r  & \ {\rm if} \ \mathrm{ord}(\det B) \ {\rm is \ odd}\\
k_r +1   &    \ {\rm if} \ \mathrm{ord}(\det B) \ {\rm is \ even \ and } \ \xi_{B}=0\\
k_r+2 & \ {\rm if} \ \xi_{B} \not=0.
\end{array} \right.$$
\end{itemize}
\item [(2)] Let $n_2=2$ and $C_r$ is unimodular diagonal. 
Then
$$\Delta(B)=\Delta(B_1)+2k_r+2.$$
\item [(3)] Let $C_r=H$ or $Y.$ Then 
$$\Delta(B)=\Delta(B_1)+2k_r.$$
\end{itemize}
\end{lemma}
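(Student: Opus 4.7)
The proof is an essentially mechanical case analysis using the explicit formulas for $\Delta$ displayed immediately before the lemma, together with the multiplicativity $\mathrm{ord}(\det B)=\mathrm{ord}(\det B_1)+\mathrm{ord}(\det B_2)$. The strategy is: (i) in each case compute $\mathrm{ord}(\det B_2)$ from the shape of $C_r$, namely $k_r$ when $n_2=1$, $2k_r$ when $C_r$ is rank-two diagonal unimodular, and $2k_r-2$ when $C_r\in\{H,Y\}$ (since for the latter two $4\det C_r\in\frko^\times$); (ii) insert these into the trichotomy for $\Delta$; (iii) invoke the pre-optimality axioms where a global constraint on $\xi$ is needed.

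For case (1), exactly one of $B$, $B_1$ has even degree, so the trichotomy applies to only one side. In (1.1), $n=n_1+1$ is odd, so $\Delta(B)=\mathrm{ord}(\det B_1)+k_r+n_1$ is a single expression, while $\Delta(B_1)$ splits into the three branches indexed by the parity of $\mathrm{ord}(\det B_1)$ and by $\xi_{B_1}$; subtracting gives $k_r+2$, $k_r+1$, $k_r$ respectively. Case (1.2) is symmetric: $\Delta(B_1)$ is a single expression in the odd-degree formula, and the trichotomy is now applied to $\Delta(B)$ with $\mathrm{ord}(\det B)=\mathrm{ord}(\det B_1)+k_r$, which reproduces the three displayed values.

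Case (2) is where pre-optimality enters. Since $C_r$ is rank-two diagonal unimodular, $n$ and $n_1$ have the same parity, and condition (PO3) of Definition~2.2 forces one of the two alternatives: either $n_1$ is even with $\xi_{B_1}=\xi_B=0$, or $n_1$ is odd with $\mathrm{ord}(\det B_1)+k_r$ even. In the even subcase both $\Delta(B_1)$ and $\Delta(B)$ use the middle branch of the trichotomy (or both use the top branch, depending on parity of $\mathrm{ord}(\det B_1)$), and direct subtraction collapses everything to $2k_r+2$. In the odd subcase one just applies $\Delta=\mathrm{ord}(\det)+n-1$ to both sides, and the parity hypothesis ensures no exceptional cancellation occurs, again yielding $2k_r+2$.

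Case (3) requires one auxiliary observation: when $C_r\in\{H,Y\}$, we have $\xi_B\neq 0\Longleftrightarrow \xi_{B_1}\neq 0$. This is because $D_{2^{k_r}H}=2^{2k_r}$ is a square, while $D_{2^{k_r}Y}$ is a unit of unramified quadratic character, so multiplying the square class of $D_{B_1}$ by $D_{B_2}$ preserves whether $F(\sqrt{D_{B_1}})/F$ is ramified. With this equivalence, each branch of the trichotomy for $B_1$ matches the corresponding branch for $B$, and term-by-term subtraction (using $\mathrm{ord}(\det B_2)=2k_r-2$ and the extra $+1$ or $+2$ that appears in the $\Delta$ formula for even degree) yields the claimed $\Delta(B)-\Delta(B_1)=2k_r$. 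The bookkeeping in this last case, plus the Hilbert-symbol-type verification that $\xi_B\neq 0\Leftrightarrow \xi_{B_1}\neq 0$, is the only mildly nontrivial step; the rest is substitution into pre-established formulas.
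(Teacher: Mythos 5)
Your proposal is correct and follows exactly the route the paper intends: the paper gives no written proof beyond "we easily obtain" from the displayed case formulas for $\Delta$ in terms of $\mathrm{ord}(\det B)$, its parity, and $\xi_B$, and your case-by-case substitution (with (PO3) supplying the needed constraint in case (2), and the observation that $D_{B_2}$ lies in the subgroup of square classes giving unramified-or-trivial extensions in case (3)) is precisely the omitted verification. Note only that in the middle branch of (1.2) the paper's "$\xi_{B_1}=0$" is evidently a typo for "$\xi_B=0$", which your argument implicitly corrects.
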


\section{Explicit formula for GK invariant}

\begin{theorem}
\label{th.3.1}
Let $r \ge 2$. 
Let $B=2^{k_1}C_1 \bot \cdots \bot 2^{k_r}C_r$ be a pre-optimal form of degree $n$, and let $\GK(B)=(a_1,\ldots,a_n)$. For each $1 \le s \le r$ put
$\widetilde n_s=\deg C_1+\cdots +\deg C_s$. Then, for any $1 \le s \le r$  we have the following:
\begin{itemize}
\item[(1)] Let $C_s \in \frac{1}{2}(S_2(\frko)_e \cap GL_2(\frko))$. Then 
\[(a_{\widetilde n_s-1},a_{\widetilde n_s})=(k_s,k_s).\]
\item[(2)] Let $C_s$ be a unimodular diagonal matrix of degree $1$. 
\begin{itemize}
\item[(2.1)] Suppose that $\widetilde n_s$ is odd. Then
\[a_{\widetilde n_s}=
\begin{cases} k_s+2 & \text{ if } \mathrm{ord}(\det B^{[s-1]}) \text{ is odd} \\
k_s+1 &   \text{ if } \mathrm{ord}(\det B^{[s-1]}) \text{ is even and } \xi_{B^{[s-1]}}=0 \\
k_s &   \text{ if }  \xi_{B^{[s-1]}}\not=0.
\end{cases}
\]
\item[(2.2)] Suppose that $\widetilde n_s$ is even. Then
\[a_{\widetilde n_s}=
\begin{cases} k_s & \text{ if } \mathrm{ord}(\det B^{[s]}) \text{ is odd} \\
k_s+1 &  \text{ if } \mathrm{ord}(\det B^{[s]}) \text{ is even and } \xi_{B^{[s]}}=0 \\
k_s +2 &  \text{ if }  \xi_{B^{[s]}}\not=0.
\end{cases}
\]
\end{itemize}
\item[(3)]  Let $C_s$ be a unimodular diagonal matrix of degree $2$.
Then
\[(a_{\widetilde n_s-1},a_{\widetilde n_s})=(k_s+1,k_s+1)\]
\end{itemize}
\end{theorem}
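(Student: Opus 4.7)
The plan is to proceed by induction on $s$. For the base case $s=1$, the form $B^{[1]} = 2^{k_1} C_1$ is a single block, and $\GK(B^{[1]})$ can be read off directly from Definition \ref{def.1.1} together with Theorem \ref{th.1.1} in each of the three block types. For example, when $C_1 \in \frac{1}{2}(S_2(\frko)_e \cap GL_2(\frko))$, the off-diagonal entry of $2^{k_1} C_1$ has $\ord(2b_{12}) = k_1$, so the constraint $(a_1+a_2)/2 \le k_1$ combined with the identity $a_1 + a_2 = \Delta(B^{[1]}) = 2k_1$ and monotonicity forces $(a_1, a_2) = (k_1, k_1)$. The degree-1 unimodular and degree-2 unimodular diagonal base cases are handled analogously, using (PO3) to ensure the requisite $\xi$-hypothesis in the latter.

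For the inductive step, assume the formula is established for $B^{[s-1]}$, so $\GK(B^{[s-1]}) = (a_1,\ldots,a_{\widetilde{n}_{s-1}})$ is known. I would apply Theorem \ref{th.2.1} to $B^{[s]} = B^{[s-1]} \perp 2^{k_s} C_s$ to obtain an optimal basis of $L_{B^{[s]}}$ whose first $\widetilde{n}_{s-1}$ elements form an optimal basis of $L_{B^{[s-1]}}$. Combined with Lemma \ref{lem.2.1}(2), this shows that the length-$\widetilde{n}_{s-1}$ prefix of $\GK(B^{[s]})$ equals $\GK(B^{[s-1]})$. Theorem \ref{th.1.1} then yields $a_{\widetilde{n}_{s-1}+1} + \cdots + a_{\widetilde{n}_s} = \Delta(B^{[s]}) - \Delta(B^{[s-1]})$, and a direct case-matching with Lemma \ref{lem.2.2} shows this sum equals the sum of the claimed values in each of the cases (1), (2.1), (2.2), (3). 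Case (2) has only one new entry, so the identification is immediate.

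To pin down the individual entries in cases (1) and (3) (where $\deg C_s = 2$), I use that $(a_{\widetilde{n}_s - 1}, a_{\widetilde{n}_s})$ is nondecreasing, has the given sum, and satisfies an upper bound read off from $B^{[s]}$. In case (1) the off-diagonal valuation $\ord(2b_{\widetilde{n}_s - 1, \widetilde{n}_s}) = k_s$ forces $a_{\widetilde{n}_s - 1} + a_{\widetilde{n}_s} \le 2k_s$; together with the sum equaling $2k_s$ and monotonicity, this yields $(k_s, k_s)$. In case (3) the diagonal entries of $2^{k_s} C_s$ have valuation $k_s$, and the only competing candidate $(k_s, k_s+2)$ is ruled out by the lex-maximality of $\GK$, which prefers $(k_s+1, k_s+1)$.

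The main obstacle is verifying the hypotheses of Theorem \ref{th.2.1} at each inductive step, particularly the bound $a_{\widetilde{n}_{s-1}+1} \le m+2$ with $m = k_s$ or $m = k_s - 2$ according as $C_s$ is diagonal or not. This requires a careful case-by-case use of the pre-optimal conditions (PO3)--(PO6), which were introduced precisely to control the parity of $\ord(\det B^{[s-1]})$ and the value of $\xi_{B^{[s-1]}}$ so that the next GK entry does not exceed $m+2$. Matching each sub-case of the pre-optimal definition with the corresponding $\Delta$-increment formula in Lemma \ref{lem.2.2} is the bookkeeping heart of the proof, and is where the most delicate verification lies.
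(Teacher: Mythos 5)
Your skeleton---induction on $s$, the prefix property via Lemma \ref{lem.2.1}(2), and the sum of the new entries via Theorem \ref{th.1.1} and Lemma \ref{lem.2.2}---is indeed the paper's strategy. But there is a genuine gap: you never establish that $\GK(B^{[s-1]})$ extends to an element of $\bfS(\{B^{[s]}\})$, which is the hypothesis both of Lemma \ref{lem.2.1}(2) and of condition (1) of Theorem \ref{th.2.1}. Routing the argument through Theorem \ref{th.2.1} begs the question, since its condition (1) is literally the statement you need (and its conclusion, the compatible optimal basis, is not required here---Lemma \ref{lem.2.1}(2) already yields the prefix once that condition holds). The paper devotes Proposition \ref{prop.3.1} to exactly this point, and its proof is the real case analysis: in several configurations (e.g.\ $k_s=k_{s-1}+1$ with both blocks diagonal, or $k_s=k_{s-1}-1$) one cannot read the extension off the given matrix but must pass to a reduced form of $B^{[s-1]}$, use Proposition \ref{prop.1.1} to locate a diagonal entry in $\calp^0$ of the right parity, and apply the explicit change of basis of Lemma \ref{lem.3.1} to raise the relevant valuations. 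You single out condition (2) of Theorem \ref{th.2.1} (the bound $a_{\widetilde n_{s-1}+1}\le m+2$) as the main obstacle, but for Theorem \ref{th.3.1} the obstacle is condition (1); condition (2) only becomes relevant for Theorem \ref{th.4.1}.

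The omission also undermines your determination of the individual entries when $\deg C_s=2$. Knowing the sum and monotonicity does not suffice: in case (3) the sum is $2k_s+2$ and $(k_s,k_s+2)$ is a non-decreasing competitor; lex-maximality rules it out only after one has \emph{exhibited} $(\GK(B^{[s-1]}),k_s+1,k_s+1)$ as an element of $\bfS(\{B^{[s]}\})$, which is precisely the Lemma \ref{lem.3.1} construction (the raw diagonal entries of $2^{k_s}C_s$ only give $(k_s,k_s)$). Likewise, in case (1) your upper bound $a_{\widetilde n_s-1}+a_{\widetilde n_s}\le 2k_s$ drawn from the off-diagonal entry of the particular matrix $B$ is not legitimate, because $\bfS(\{B\})$ is a union over all $\GL_n(\frko)$-translates of $B$; the correct upper bound is the $\Delta$-count of Lemma \ref{lem.2.2}, and the matching lower bound $a_{\widetilde n_s-1}\ge k_s$ again requires exhibiting the extension. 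So the conclusions are right, but the load-bearing step (Proposition \ref{prop.3.1}) is missing from your outline.
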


To prove the above theorem, we need some preliminary results.
\begin{lemma}
\label{lem.3.1} Let $n, m$ and $k$ be non-negative integers such that $1 \le m \le 2$ and $n > m$. Let $B_1$ be a reduced form of degree $n-m$ with $\GK$ type $(\ua',\sigma')$ and $C$  a diagonal unimodular matrix of degree $m$, and put $B=B_1 \bot 2^{k}C$.  Assume that
there is an integer $j_0 \in \calp^0(\sigma')$ such that $\mathrm{ord}(b_{j_0,j_0}) \equiv k \text{ mod } 2$. 
Then there 
is a matrix $B'$ which is equivalent to $B$ such that 
$(B')^{(n-m)}=B^{(n-m)}$,
and $(\ua',k+1,k+1) \in S(B')$.
\end{lemma}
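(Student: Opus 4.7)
The plan is to exhibit an explicit change of basis of $L_B$ that fixes the first $n-m$ basis vectors and modifies each of the last $m$ by adding a scalar multiple of $\psi_{j_0}$, thereby killing the $2^k$-leading term of the diagonal entries from the $2^kC$-block. Fix a framed basis $\{\psi_1,\dots,\psi_n\}$ realizing $B$, so that $\{\psi_{n-m+1},\dots,\psi_n\}$ spans $L_{2^kC}$ and is orthogonal to $L_{B_1}$; write $C=\diag(c_1,\dots,c_m)$ with $c_i\in\frko^\times$ and $b_{j_0,j_0}=2^{a'_{j_0}}v$ with $v\in\frko^\times$, using $\sigma'(j_0)=j_0$ together with Definition \ref{def.1.8}(2) to see $\ord(b_{j_0,j_0})=a'_{j_0}$.

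Since $(\ua',k+1,k+1)$ must be non-decreasing to lie in $S(B')$ and $a'_{j_0}\equiv k\pmod 2$, it must hold that $a'_{j_0}\le k$; set $t=(k-a'_{j_0})/2\ge 0$. For each $1\le i\le m$ we seek $\alpha_i\in\frko$ making $\psi'_{n-m+i}:=\psi_{n-m+i}+\alpha_i\psi_{j_0}$ satisfy $\ord((\psi'_{n-m+i},\psi'_{n-m+i}))\ge k+2$. Orthogonality of the two blocks reduces this demand to
\[
2^{k+1}c_i + 2^{a'_{j_0}+1}v\alpha_i^2 \equiv 0 \pmod{2^{k+2}},
\]
and the ansatz $\alpha_i=2^t\beta_i$ collapses it further to $\beta_i^2\equiv -c_i/v\pmod{2\frko}$. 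Because $\frko/2\frko$ is a finite field of characteristic $2$, the Frobenius map $x\mapsto x^2$ is bijective, so a unit $\beta_i$ with this property exists; define $\alpha_i=2^t\beta_i$. Setting $\psi'_j=\psi_j$ for $j\le n-m$ yields a Gram matrix $B'$ automatically satisfying $B'\sim B$ and $(B')^{(n-m)}=B_1$.

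The verification of $(\ua',k+1,k+1)\in S(B')$ is then routine: by construction $\ord(b'_{n-m+i,n-m+i})\ge k+1$; for $m=2$, $(\psi'_{n-1},\psi'_n)=\alpha_1\alpha_2\cdot 2b_{j_0,j_0}=2^{k+1}\beta_1\beta_2 v$ gives $\ord(2b'_{n-1,n})\ge k+1$; for $i\le n-m<j$, $(\psi'_i,\psi'_j)=\alpha_{j-n+m}\cdot 2b_{i,j_0}$, and the reduced-form conditions on $B_1$ (Definition \ref{def.1.8}(2) when $i=j_0$, and (3) otherwise) yield $\ord(2b_{i,j_0})\ge(a'_i+a'_{j_0}+1)/2$, whence $\ord(2b'_{ij})\ge t+(a'_i+a'_{j_0}+1)/2=(a'_i+k+1)/2$; the upper-left $(n-m)\times(n-m)$ block is unchanged, so the inequalities for $i,j\le n-m$ follow from $\ua'\in S(B_1)$. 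The only genuinely non-routine step is the solvability of $\beta_i^2\equiv -c_i/v\pmod{2\frko}$, and this is where both halves of the hypothesis combine: the parity condition makes $t=(k-a'_{j_0})/2$ integral, and the characteristic-$2$ bijectivity of Frobenius on the residue field provides the square root $\beta_i$.
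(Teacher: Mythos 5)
Your proof is correct and follows essentially the same route as the paper: an explicit unipotent change of basis adding $\frko$-multiples of $\psi_{j_0}$ to the last $m$ vectors, with the needed units produced by the bijectivity of squaring on the residue field of characteristic $2$ (the paper's $u_{j_0}v_{j_0,n-1}^2-u_{n-1}\in 2\frko$); the only cosmetic difference is that for $m=2$ the paper corrects the second diagonal entry by adding a multiple of $\psi_{n-1}$ rather than of $\psi_{j_0}$. Your observation that one must have $a'_{j_0}\le k$ (so that $2^{(k-a'_{j_0})/2}\in\frko$) is an implicit hypothesis shared by the paper's own proof, so it is not a defect of your argument.
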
 
\begin{proof} Put $\ua'=(a_1,\ldots,a_{n-m})$ and $b_{j_0,j_0}=2^{k_0}u_{j_0}$ with $u_{j_0} \in \frko^{\times}$.  First let $C=u_{n-1} \bot u_{n}$ with $u_{n-1},u_{n} \in \frko^{\times}$.
We can take  elements $v_{j_0,n-1}$ and $v_{n-1,n-1}$ of $\frko^{\times}$ such that
$u_{j_0}v_{j_0,n-1}^2-u_{n-1} \in 2\frko$ and $u_{n-1}v_{n-1,n}^2-u_{n} \in 2\frko$.
Put 
$$B'=(b_{ij}')=B \left[\left(\begin{smallmatrix}  1 & 0  & 0      & 0           & 0          & 0 \\
                                                0 & 1 & 0      &0            & 2^{(k-k_{0})/2}v_{j_0,n-1} & 0 \\
                                                0 & 0 & 1      &0            & 0          & 0 \\
                                                0 & 0 & \ddots &             & \vdots & \vdots\\
                                                0 & 0 & 0 & \hdots        &  1          &  v_{n-1,n} \\
                                                0 & 0 & 0 &\hdots         &  0          &  1 
\end{smallmatrix}\right)\right].$$
Then, we have $(B')^{(n-m)}=B^{(n-m)},$ and 
$$\mathrm{ord}(2b_{i,j}') \ge (a_i +k+1)/2 \text{ for any } 1 \le i \le n-m, n-m \le j \le n,$$
and
$$\mathrm{ord}(2^{1-\delta_{ij}}b_{ij}') \ge k+1 \text{ for any } n-m+1 \le i,j \le n.$$
Namely, $B'$ satisfies the required conditions.
Similarly, the assertion holds in the case $\deg C_r=1$.

\end{proof}

\begin{proposition}
\label{prop.3.1}
 Let $r \ge 2$.
Let $B=2^{k_1}C_1 \bot \cdots \bot 2^{k_r}C_r$ be a pre-optimal form of degree $n$,  and $B_1=2^{k_1}C_1 \bot \cdots \bot 2^{k_{r-1}}C_{r-1}$. Put $n_r=\deg C_r$ and $\GK(B_1)=(b_1,\ldots,b_{n-n_r})$. Suppose that $b_1,\ldots,b_{n-n_r}$ satisfy  the condition in Theorem \ref{th.3.1}.
Then there are integers $c_{n-n_r+1},\ldots,c_n$ such that
$$(\GK(B_1),c_{n-n_r+1},\ldots,c_n) \in \bfS(\{B\}).$$
\end{proposition}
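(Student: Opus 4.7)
The plan is to construct explicitly a matrix $B'\sim B$ and integers $c_{n-n_r+1},\ldots,c_n$ such that $(b_1,\ldots,b_{n-n_r},c_{n-n_r+1},\ldots,c_n)\in S(B')$. I work with the orthogonal decomposition $B=B_1\perp 2^{k_r}C_r$. By hypothesis, $\GK(B_1)=(b_1,\ldots,b_{n-n_r})$ matches the formulas of Theorem \ref{th.3.1}, and by Theorem \ref{th.1.6} we may replace $B_1$ by an equivalent reduced form of the same GK-type, in which case the defining basis of $L_{B_1}$ already puts $(b_1,\ldots,b_{n-n_r})$ in $S(B_1)$. Extending by the standard basis of $L_{2^{k_r}C_r}$ yields a basis of $L_B$ with vanishing cross-terms, so the ord conditions across the two blocks are automatic and only the conditions inside the $C_r$-block remain to be checked.

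Next I would take the values of $c_i$ prescribed by Theorem \ref{th.3.1} applied to the last block, and split into three cases. If $C_r\in\tfrac12(S_2(\frko)_e\cap GL_2(\frko))$, take $(c_{n-1},c_n)=(k_r,k_r)$; the off-diagonal of $2^{k_r}C_r$ has ord exactly $k_r$ and its diagonal entries have ord $\geq k_r$, so the standard basis works. If $C_r$ is a diagonal unimodular block and the prescribed value satisfies $c_{n-n_r+i}\leq k_r$, the standard basis again suffices. In the remaining sub-cases $c_{n-n_r+i}\in\{k_r+1,k_r+2\}$ exceeds the ord of the diagonal of $2^{k_r}C_r$, so the basis must be modified. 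Here I apply Lemma \ref{lem.3.1} to a reduced-form representative of $B_1$, producing $B'\sim B$ whose last diagonal entries have ord $\geq k_r+1$; for the further upgrade to $k_r+2$ (which occurs only when $\xi\ne 0$ at the preceding step), a second application of Lemma \ref{lem.3.1} with an index of $\calp^0$ of opposite parity is available. The parity condition $\mathrm{ord}(b_{j_0,j_0})\equiv k_r\pmod 2$ demanded by Lemma \ref{lem.3.1} is exactly what the pre-optimal conditions (PO3)--(PO5), combined with Proposition \ref{prop.1.1}, guarantee: (PO3)(2) and Proposition \ref{prop.1.1}(1) handle the odd-degree case, while (PO3)(1) together with Proposition \ref{prop.1.1}(2) handle the even-degree, $\xi=0$ case.

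The last point to verify is that the extended sequence is non-decreasing, i.e., $b_{n-n_r}\leq c_{n-n_r+1}$. By Theorem \ref{th.3.1} applied to $B_1$, the value $b_{n-n_r}$ is $k_{r-1}$, $k_{r-1}+1$ or $k_{r-1}+2$ according to the type of $C_{r-1}$ and on parities of $\mathrm{ord}(\det B^{[r-2]})$ and $\deg B^{[r-1]}$, and values of $\xi_{B^{[r-2]}}$ or $\xi_{B^{[r-1]}}$; similarly $c_{n-n_r+1}$ is a small adjustment of $k_r$. The pre-optimal axioms (PO1)--(PO6) are precisely the constraints that force each such pair to satisfy the inequality. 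I expect this final bookkeeping to be the main obstacle: in particular the transitions where $C_{r-1}$ and $C_r$ have different types (diagonal unimodular versus $\tfrac12 S_2(\frko)_e$-type) and the gap $k_r-k_{r-1}\in\{0,1\}$ is small, where one must combine (PO4), (PO5) and (PO6) with Theorem \ref{th.1.4} to pin down the $\xi$- and $\eta$-invariants of the relevant truncations and conclude $b_{n-n_r}\leq c_{n-n_r+1}$ in each configuration.
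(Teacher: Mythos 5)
Your overall strategy is the paper's: replace $B_1$ by a reduced form of the same GK type, extend by the last block $2^{k_r}C_r$, and use Proposition \ref{prop.1.1} together with Lemma \ref{lem.3.1} to raise the order of the last diagonal entries when $k_r$ alone is too small, with the non-decreasing condition $b_{n-n_r}\le c_{n-n_r+1}$ checked against the pre-optimal axioms. But one step does not work as written. You insist on taking for $c_{n-n_r+1},\ldots,c_n$ the exact values prescribed by Theorem \ref{th.3.1}, and in the sub-cases where that value is $k_r+2$ you propose ``a second application of Lemma \ref{lem.3.1} with an index of $\calp^0$ of opposite parity.'' Lemma \ref{lem.3.1} takes a \emph{reduced form} as input and only ever delivers a bump of one (its conclusion is $(\ua',k+1,k+1)\in S(B')$), and its output is no longer a reduced form, so a second application is not licensed. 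Reaching $k_r+2$ would instead require a refinement of the lemma (choosing the unit $v_{j_0,n-1}$ so that $u_{j_0}v_{j_0,n-1}^2-u_{n-1}\in 4\frko$, which is possible precisely when the relevant $\xi$-invariant is nonzero), not an iteration of it.

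The good news is that this entire branch is unnecessary. The proposition asks only for \emph{some} integers $c_{n-n_r+1},\ldots,c_n$ with $(\GK(B_1),c_{n-n_r+1},\ldots,c_n)\in\bfS(\{B\})$; it does not ask that these coincide with the tail of $\GK(B)$. That weaker statement is exactly what Lemma \ref{lem.2.1}(2) needs to conclude $\GK(B_1)=(a_1,\ldots,a_{n-n_r})$, after which the actual values $a_{n-n_r+1},\ldots,a_n$ are recovered from Theorem \ref{th.1.1} and Lemma \ref{lem.2.2} in the proof of Theorem \ref{th.3.1}. The paper's proof accordingly stops at $c=k_r$ or $c=k_r+1$ in every case --- at most one application of Lemma \ref{lem.3.1} --- and never needs $k_r+2$. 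If you drop the attempt to realize the Theorem \ref{th.3.1} values and settle for $k_r+1$ in the problematic cases, your argument coincides with the paper's and is correct; the remaining monotonicity check is then the case analysis on (PO2)--(PO6) that the paper carries out case by case.
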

\begin{proof}
Let $\widetilde B_1$ be a reduced  form with $\GK$ type $(\GK(B_1),\sigma_1)$ such that $\widetilde B_1 \sim B_1$ and put $\widetilde B=\widetilde B_1 \bot 2^{k_r}C_r$.
Put ${\bf k}_r=(k_r)$ or ${\bf k}_r=(k_r,k_r)$ according as $n_r=1$ or $2$. We divide the proof into several cases.\\
(1) Suppose that $k_r \ge k_{r-1}+2$. Then, by the assumption, $b_{n-n_r} \le k_r$, and $(\GK(B_1),{\bf k}_r) \in S(\widetilde B)$.\\
(2) Suppose that $C_{r-1} \in \frac{1}{2}(S_2(\frko)_e \cap GL_2(\frko))$ and $k_{r-1} \le k_r$.  Then, by the assumption $b_{n-n_r}=k_{r-1} \le k_r$.
Hence $(\GK(B_1),{\bf k}_r) \in S(\widetilde B)$.\\
(3) Suppose that $k_r=k_{r-1}+1$,  $C_{r-1}$ is a diagonal  matrix, and that 
$C_r \in \frac{1}{2}(S_2(\frko)_e \cap GL_2(\frko))$. 
Then, by (PO5), and by the assumption, $b_{n-2}=k_{r-1}$, and hence
$(\GK(B_1),k_r,k_r) \in S(\widetilde B)$.\\
(4) Suppose that $k_r=k_{r-1}+1$ and that $C_{r-1}$ and $C_r$ are diagonal. 
First suppose that $\deg C_r=2$. If $\deg B_1$ is even, then $\xi_{B_1}=\xi_B=0$ by (PO3) (1). Hence, by the assumption $b_{n-2} \le k_{r-1}+1$, and $(\GK(B_1),k_r,k_r) \in S(\widetilde B)$. If $\deg B_1$ is odd, then $\mathrm{ord}(\det B_1)+k_r$ is even. Hence, by Proposition \ref{prop.1.1}, there is an integer $j_0 \in \calp^0(\sigma_1)$ such that  $\mathrm{ord}(b_{j_0,j_0}) \equiv k_r \text{ mod } 2$.  Then, by Lemma \ref{lem.3.1}, there is an element $\widetilde B' \in \calh_n(\frko)$ such that
$\widetilde B' \sim \widetilde B$ and $(\GK(B_1),k_r+1,k_r+1) \in S(\widetilde B')$. Next suppose that $\deg C_r=1$. If $\deg B_1$ is even, then $\xi_{B_1}=0$, and $b_{n-1} \le k_{r-1}+1$. Hence, $(\GK(B_1),k_r) \in S(\widetilde B)$. Suppose that $\deg B_1$ is odd. If $b_{n-1} \le k_{r-1}+1$, then 
$(\GK(B_1),k_r) \in S(\widetilde B)$. If $b_{n-1}=k_{r-1}+2$, then $r \ge 3$,  $\deg C_{r-1}=1$ and $\mathrm{ord}(\det B^{(n-2)})$ is odd. This implies that $\mathrm{ord}(\det B_1)+k_r$ is even. 
Then, by Proposition \ref{prop.1.1}, there is an integer $j_0 \in \calp^0(\sigma_1)$ such that $\mathrm{ord}(b_{j_0,j_0})  \equiv k_r \text{ mod } 2$. Then, by Lemma \ref{lem.3.1}, 
there is an element $B' \in \calh_n(\frko)$ such that
$\widetilde B' \sim \widetilde B$ and $(\GK(B_1),k_r+1) \in S(\widetilde B')$.\\
(5) Suppose that $k_r=k_{r-1}$ and that $\deg C_{r-1}=\deg C_r=1$. 
If $r=2$, then the assertion holds. Suppose that $r \ge 3$. 
If $\deg B^{(n-2)}$ is even, then $\mathrm{ord}(\det B^{(n-2)})$ 
is even by (PO2). Then $b_{n-1} \le k_{r-1}+1$ and using the same argument as above, we can prove that  there is a matrix $\widetilde B'$ which is equivalent to $\widetilde B$ such that $(\GK(B_1),k_r+1) \in S(\widetilde B')$. If $\deg B^{(n-2)}$ is odd, then $\mathrm{ord}(\det B_1)$ is odd again by (PO2). Then $b_{n-1}=k_r$ and $(\GK(B_1),k_r) \in S(\widetilde B)$. \\
(6) Suppose that $k_r=k_{r-1}-1$. Then $r \ge 3$, 
$C_{r-1} \in \frac{1}{2} (S_2(\frko)_e \cap GL_2(\frko))$ and $C_r$ is diagonal by (PO3). Moreover, $b_{n-n_r}=k_{r-1}$.
First suppose that $\deg C_r =1$. 
If $\deg B$ is even, then $\mathrm{ord}(\det B)$ is even by (PO3) (1).
Then, there is an integer $1 \le j_0 \le n-3$ such that
$b_{j_0,j_0} \not=0$ and $\mathrm{ord}(b_{j_0,j_0}) \equiv k_r \text{ mod } 2$. Hence, by Lemma \ref{lem.3.1}, there is a matrix $\widetilde B'$ such that $\widetilde B'$ is equivalent to $\widetilde B$ and $(\GK(B_1),k_r+1) \in S(\widetilde B')$. \\
If $\deg B$ is odd, then  $\xi_{B_1}=0$. Then, by Proposition \ref{prop.1.1} and  Lemma \ref{lem.3.1}, there is a matrix $\widetilde B'$ such that $\widetilde B' \sim \widetilde B$ and $(\GK(B_1),k_r+1) \in S(\widetilde B')$.
Next suppose that $\deg C_r =2$. If $\deg B$ is even, then $\mathrm{ord}(\det B)$ is even. If $\deg B$ is odd, then $\mathrm{ord}(\det B_1)+k_r$ is even. In any case, by using the same argument as above we can prove the assertion.

\end{proof}

{\bf Proof of Theorem \ref{th.3.1}} It suffices to prove the assertion  for the case $s=r$. Clearly the assertion holds for $r=1$. Let $r>1$ and suppose that the assertion holds for $r-1$. Then, by Proposition \ref{prop.3.1} and Lemma \ref{lem.2.2}, we have $\GK(B^{[r-1]})=(a_1,\ldots,a_{n-n_r})$ with $n_r=\deg C_r$. Then the assertion follows from Lemma \ref{lem.2.2} and Theorem \ref{th.1.1}.

\section{Explicit formula for a naive EGK datum}\label{sec:negk}

First we introduce some definitions.
Put ${\mathcal Z}_3=\{0,1,-1 \}$. 
\begin{definition} 
\label{def:4.1}
An element $H=(a_1,\ldots,a_n;\vep_1,\ldots,\vep_n)$
of $\ZZ_{\ge 0}^n \times {\mathcal Z}_3^n$ is said to be a naive  EGK datum of length $n$  if the following conditions hold:
\begin{itemize}
\item [(N1)] $a_1 \le \cdots \le a_n$.
\item [(N2)] Suppose that $i$ is even. 
Then $\vep_i \not=0$ if and only if $a_1+\cdots+a_i$ is even.
\item [(N3)] If $i$ is odd, then $\vep_i \not=0$. 
\item [(N4)] $\vep_1=1$.
\item [(N5)] If $i  \ge 3$ is odd and $a_1+\cdots + a_{i-1}$ is even, then $\vep_i=\vep_{i-2}\vep_{i-1}^{a_i+a_{i-1}}$.
\end{itemize}
We denote the set of naive EGK data of length $n$ by $\mathcal{NEGK}_n$.
\end{definition}

\begin{definition} 
\label{def:4.2}
Let $G=(n_1,\ldots, n_r; m_1, \ldots, m_r; \zeta_1, \ldots, \zeta_r)$ be an element of $\ZZ_{>0}^r \times \ZZ_{\ge 0}^r  \times {\mathcal Z}_3^r$.
Put $n^\ast_s=\sum_{i=1}^s n_i$ for $s \leq r$.
We say that $G$ is an EGK datum of length $n$  if the following conditions hold:
\begin{itemize}
\item [(E1)] $n^\ast_r=n$ and $m_1 <\cdots < m_r$.
\item[(E2)] Suppose that $n^\ast_s$ is even. 
Then $\zeta_s \not=0$ if and only if $m_1 n_1+\cdots+m_s n_s$ is even.
\item [(E3)] Suppose that $n^\ast_s$ is odd. 
Then $\zeta_s \not=0$. 
Moreover, we have
\begin{itemize}
\item[(a)] Suppose that $n^\ast_i$ is even for any $i<s$.
Then we have
\[
\zeta_s
=\zeta_1^{m_1+m_2}
\zeta_2^{m_2+m_3}
\cdots
\zeta_{s-1}^{m_{s-1}+m_s}.
\]
In particular, $\zeta_1=1$ if $n_1$ is odd.
\item[(b)]
Suppose that $m_1n_1+\cdots + m_{s-1}n_{s-1}+m_s(n_s-1)$ is even and that $n^\ast_i$ is odd for some $i<s$.
Let $t<s$ be the largest number such that $n^\ast_t$ is odd.
Then we have
\[
\zeta_s=
\zeta_t
\zeta_{t+1}^{m_{t+1}+m_{t+2}} 
\zeta_{t+2}^{m_{t+2} + m_{t+3}}
\cdots
\zeta_{s-1}^{m_{s-1}+m_s}.
\]
In particular, $\zeta_s=\zeta_t$ if $t+1=s$.
\end{itemize}
\end{itemize}
We denote the set of EGK data of length $n$ by $\mathcal{EGK}_n$.
Thus $\mathcal{EGK}_n\subset \coprod_{r=1}^n (\ZZ_{>0}^r \times \ZZ_{\ge 0}^r  \times {\mathcal Z}_3^r)$.
\end{definition}

Let $H=(a_1,\ldots,a_n;\vep_1,\ldots,\vep_n)$ be a naive EGK datum.
We define $n_1, n_2, \dots, n_r$ by
\begin{align*}
&a_1=\cdots=a_{n_1} <a_{n_1+1}, \\
&a_{n_1}<a_{n_1+1}=\cdots=a_{n_1+n_2}<a_{n_1+n_2+1}, \\
&\cdots \\
&a_{n_1+\cdots+n_{r-1}}<a_{n_1+\cdots+n_{r-1}+1}=\cdots =a_{n_1+\cdots+n_r}
\end{align*}
with $n=n_1+\cdots+n_r$.
For $s=1, 2, \ldots, r$, we set
\[ 
n^\ast_s=\sum_{u=1}^{s} n_u, \; m_s=a_{n^\ast_s}, \text{ and }
\zeta_s=\vep_{n^\ast_s}.
\]
The following proposition can be easily verified.
\begin{proposition} 
\label{prop.4.1}
Let $H=(a_1,\ldots,a_n;\vep_1,\ldots,\vep_n)$ be a naive $\EGK$ datum.
Then $$G=(n_1, \ldots, n_r; m_1, \ldots, m_r; \zeta_1, \ldots, \zeta_r)$$ is an $\EGK$ datum.
\end{proposition}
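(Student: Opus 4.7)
The plan is to verify (E1), (E2), (E3) in turn for $G$, using the identifications $m_s = a_{n^\ast_s}$, $\zeta_s = \vep_{n^\ast_s}$, and $a_j = m_u$ for $j \in I_u := \{n^\ast_{u-1}+1, \ldots, n^\ast_u\}$ that are built into the construction. Condition (E1) is immediate from defining the $n_u$'s as the lengths of the maximal constancy runs of $(a_1 \leq \cdots \leq a_n)$. For (E2), when $n^\ast_s$ is even, (N2) says $\vep_{n^\ast_s} \neq 0$ iff $a_1 + \cdots + a_{n^\ast_s}$ is even; rewriting this sum as $\sum_{u=1}^s n_u m_u$ via the block structure gives (E2).

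The content is in (E3). Non-vanishing $\zeta_s \neq 0$ when $n^\ast_s$ is odd is (N3). For the explicit formulas I iterate the recurrence (N5) downward from $i = n^\ast_s$, using two elementary moves. \emph{Within-block stability:} if $i$ and $i-2$ both lie in $I_u$, $i$ is odd, and $a_1+\cdots+a_{i-1}$ is even, then (N5) yields $\vep_i = \vep_{i-2}\vep_{i-1}^{2m_u}$; since $i-1$ is even with $a_1+\cdots+a_{i-1}$ even, (N2) forces $\vep_{i-1} \in \{\pm 1\}$, so $\vep_{i-1}^{2m_u}=1$ and $\vep_i = \vep_{i-2}$. \emph{Cross-block step:} if $n^\ast_{u-1}$ is even so that $i = n^\ast_{u-1}+1 \geq 3$ is odd, and $a_1 + \cdots + a_{n^\ast_{u-1}} = \sum_{v<u} n_v m_v$ is even, then (N5) at $i$ reads
\[
\vep_{n^\ast_{u-1}+1} = \vep_{n^\ast_{u-1}-1}\,\zeta_{u-1}^{m_{u-1}+m_u}.
\]

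Combining these in case (E3)(a), where every $n^\ast_u$ with $u<s$ is even, I stabilize within $I_s$ to reduce $\vep_{n^\ast_s}$ to $\vep_{n^\ast_{s-1}+1}$, apply a cross-block step to land on $\vep_{n^\ast_{s-1}-1}$ while picking up $\zeta_{s-1}^{m_{s-1}+m_s}$, stabilize within $I_{s-1}$, and repeat, eventually reaching $\vep_1 = 1$ by (N4). This gives $\zeta_s = \prod_{u=1}^{s-1}\zeta_u^{m_u+m_{u+1}}$, matching (E3)(a). For case (E3)(b) the same descent stops at the largest $t<s$ with $n^\ast_t$ odd, where $\vep_{n^\ast_t} = \zeta_t$ supplies the base value, producing the formula in (E3)(b).

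The main (in fact, only nontrivial) obstacle is verifying that the parity hypothesis $a_1+\cdots+a_{i-1}$ even required by (N5) holds at every odd $i$ visited during the descent. In case (a) this reduces to the evenness of $n_1,\ldots,n_{s-1}$ combined with $a_j = m_u$ on $I_u$. In case (b) the hypothesis that $m_1n_1+\cdots+m_{s-1}n_{s-1}+m_s(n_s-1)$ is even, combined with the parity pattern of the $n^\ast_u$ for $t \leq u < s$, supplies exactly what is needed; a small separate check handles the subcase $t = s-1$, where the descent inside $I_s$ must use (N5) across the odd boundary $n^\ast_{s-1}$ to hit $\vep_{n^\ast_t}$ in a single step. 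All remaining pieces are routine translations from the definitions.
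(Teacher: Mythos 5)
The paper offers no proof of this proposition (it is stated with ``can be easily verified''), so there is nothing to compare against; your write-up is exactly the intended routine verification. Your argument is correct: (E1) and (E2) are immediate translations, and your descent through (N5) — within-block stabilization using (N2) to kill the even powers $\vep_{i-1}^{2m_u}$, plus the cross-block step picking up $\zeta_{u-1}^{m_{u-1}+m_u}$ — together with the parity bookkeeping you describe (including the $t=s-1$ boundary case) correctly yields (E3)(a) and (E3)(b).
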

We define a map $\Ups=\Ups_n:\mathcal{NEGK}_n\rightarrow \mathcal{EGK}_n$ by $\Ups(H)=G$.
We call $G=\Ups(H)$ the EGK datum associated to a naive EGK datum $H$.
We also write $\Ups(\ua)=(n_1, \ldots, n_r; m_1, \ldots, m_r)$, if there is no fear of confusion.

\begin{proposition} 
\label{prop.4.2}
The map $\Ups:\mathcal{NEGK}_n\rightarrow\mathcal{EGK}_n$ is surjective.
Thus for any $\EGK$ datum 
\[
G=(n_1, \ldots, n_r; m_1, \ldots, m_r; \zeta_1, \ldots, \zeta_r)
\]
of length $n$, there exists a naive $\EGK$ datum $H$ such that $\Ups(H)=G$.
\end{proposition}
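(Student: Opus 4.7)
For a given EGK datum $G=(n_1,\ldots,n_r;m_1,\ldots,m_r;\zeta_1,\ldots,\zeta_r)$ I would construct an explicit naive EGK datum $H=(a_1,\ldots,a_n;\vep_1,\ldots,\vep_n)$ with $\Ups(H)=G$. The exponent sequence is forced by the definition of $\Ups$: set $a_i=m_s$ whenever $n^\ast_{s-1}<i\le n^\ast_s$, which gives (N1). I would then define the $\vep_i$ by induction on $i$, starting with $\vep_1=1$. At each $i\ge 2$, the value is either forced---by (N2) when $i$ is even and $a_1+\cdots+a_i$ is odd, giving $\vep_i=0$, or by (N5) when $i\ge 3$ is odd and $a_1+\cdots+a_{i-1}$ is even---in which case I assign the forced value; otherwise $\vep_i$ is free in $\{\pm 1\}$, in which case I set $\vep_i=\zeta_s$ if $i=n^\ast_s$ and $\vep_i=1$ otherwise.

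With this definition, conditions (N1)--(N5) hold by construction. The remaining task is to verify that $\vep_{n^\ast_s}=\zeta_s$, which needs checking only at those boundary indices where $\vep_{n^\ast_s}$ is forced rather than freely assigned. When $n^\ast_s$ is even and $a_1+\cdots+a_{n^\ast_s}$ is odd, (N2) forces $\vep_{n^\ast_s}=0$; this agrees with (E2), which gives $\zeta_s=0$ in exactly the same parity range. When $n^\ast_s$ is odd and $a_1+\cdots+a_{n^\ast_s-1}$ is even, (N5) forces $\vep_{n^\ast_s}=\vep_{n^\ast_s-2}\,\vep_{n^\ast_s-1}^{\,m_s+a_{n^\ast_s-1}}$, and we must show this equals $\zeta_s$ as given by (E3)(a) or (E3)(b).

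My strategy for this last check is a propagation lemma proved by induction on $s$. Two elementary moves control the $\vep$'s at odd indices: first, whenever (N5) applies at an odd $i$ in block $s$ with $i-1$ also in block $s$, the exponent $a_i+a_{i-1}=2m_s$ is even and $\vep_{i-1}\in\{\pm 1\}$, so $\vep_i=\vep_{i-2}$; second, at the first odd index of block $s$ where (N5) fires and whose predecessor lies in block $s-1$, the formula picks up a factor $\zeta_{s-1}^{\,m_{s-1}+m_s}$. Telescoping these moves gives a closed product for $\vep_{n^\ast_s}$ that matches the right-hand side of (E3)(a) when every $n^\ast_j$ ($j<s$) is even.

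The main obstacle lies in the mixed case (E3)(b), where the telescope must ``restart'' at the largest $t<s$ with $n^\ast_t$ odd. The key parity fact, to be verified by a short computation, is that the hypothesis of (E3)(b)---namely $m_1n_1+\cdots+m_{s-1}n_{s-1}+m_s(n_s-1)$ even---forces the parity condition making (N5) fire at every odd index of blocks $t+1,\ldots,s$. The cross-block move then yields $\zeta_t$ without an exponent at the restart (because $n^\ast_t$ is itself an odd boundary position), reproducing exactly the right-hand side of (E3)(b). Conversely, if that parity condition fails then $a_1+\cdots+a_{n^\ast_s-1}$ must be odd, so $\vep_{n^\ast_s}$ is free and no verification is needed.
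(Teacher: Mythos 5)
The paper states Proposition \ref{prop.4.2} without proof, so there is nothing to compare your argument against; judged on its own, your construction is correct and is presumably the intended argument. The two "moves" and the termination of the telescope at $\vep_1=1$ (case (E3)(a)) or at the odd boundary $\vep_{n^\ast_t}=\zeta_t$ (case (E3)(b)) do reproduce the right-hand sides of (E3), and the parity fact you defer is genuinely short: for even $j$ the difference $T_j-T_{j-2}=a_j+a_{j-1}$ ($T_i=a_1+\cdots+a_i$) is even unless $j-1$ is an \emph{odd} block boundary, and no such boundary occurs strictly between $n^\ast_t$ and $n^\ast_s$ (nor anywhere below $n^\ast_s$ in case (a)), so evenness of $T_{n^\ast_s-1}$ propagates down the whole chain and (N5) fires at every odd index visited. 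One point worth making explicit in case (a): there is no parity hypothesis in (E3)(a), but since all blocks below $s$ then have even length, $T_{n^\ast_s-1}$ is automatically even, so the boundary value is always forced there and your verification is never vacuous; conversely the free assignments $\vep_i=1$ at non-boundary odd indices with $T_{i-1}$ odd never feed into a later application of (N5), so they cause no conflict.
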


\begin{proposition} 
\label{prop.4.3} Let $F$ be a non-archimedean local field of characteristic $0$, and $\frko$ the ring of integers in $F$. 
Let  $B \in \calhnd_n(\frko )$. 
Then $\EGK(B)$ is an $\EGK$ datum of length $n$.
\end{proposition}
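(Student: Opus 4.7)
The plan is to verify the three axioms (E1)--(E3) in Definition \ref{def:4.2} for the tuple $G=(n_1,\ldots,n_r;m_1,\ldots,m_r;\zeta_1,\ldots,\zeta_r)=\EGK(B)$. By Theorem \ref{th.1.4} and Definition \ref{def.1.5}, $G$ depends only on the equivalence class of $B$, so I replace $B$ by an optimal representative $B'$ with $\GK(B')=\ua=(\underbrace{m_1,\ldots,m_1}_{n_1},\ldots,\underbrace{m_r,\ldots,m_r}_{n_r})$. Axiom (E1), namely $n_r^*=n$ and $m_1<\cdots<m_r$, is then immediate from the way the $n_i$ and $m_i$ are read off.

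For (E2), fix $s$ with $n_s^*$ even. When $s<r$, the inequality $m_s<m_{s+1}$ lets me invoke Theorem \ref{th.1.3} to see that $(B')^{(n_s^*)}$ is itself optimal with $\GK$-invariant $\ua^{(n_s^*)}$; the case $s=r$ is trivial. Writing $A=(B')^{(n_s^*)}$, Theorem \ref{th.1.1} yields
\[
m_1n_1+\cdots+m_sn_s=|\GK(A)|=\Delta(A).
\]
Since $\deg A$ is even, Definition \ref{def.1.3} gives $\Delta(A)=\ord(D_A)-\ord(\mathfrak{D}_A)+1-\xi_A^2$. Invoking the standard parity $\ord(D_A)\equiv\ord(\mathfrak{D}_A)\pmod 2$ for the quadratic étale algebra $F(\sqrt{D_A})/F$, one obtains $\Delta(A)\equiv 1-\xi_A^2\pmod 2$. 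Hence $\zeta_s=\xi_A\ne 0$ iff $\Delta(A)$ is even iff $m_1n_1+\cdots+m_sn_s\in 2\ZZ$, which is exactly (E2).

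For (E3), when $n_s^*$ is odd the invariant $\zeta_s=\eta_{(B')^{(n_s^*)}}\in\{\pm 1\}$ is automatically nonzero, so only the product formulas in (a) and (b) require work. I apply Theorem \ref{th.1.6} to replace $B'$ by a reduced form of $\GK$-type $(\ua,\sigma)$ (whose GK invariant is still $\ua$ by Theorem \ref{th.1.5}), and exploit the block structure imposed by the $\ua$-admissible involution $\sigma$. Since $\xi$ and $\eta$ are invariant under $\GL(F)$-equivalence, I may further diagonalise each block over $F$ and apply the multiplicative law $\eta_{A\perp C}=\eta_A\,\eta_C\,\langle\det A,\det C\rangle$ together with its $\xi$-counterpart at every even intermediate stage. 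Under the hypothesis of (a) every $n_i^*$ with $i<s$ is even, so (E2) guarantees each $\zeta_i=\xi_{(B')^{(n_i^*)}}\ne 0$ and controls the cumulative determinant modulo $F^{\times 2}$; the Hilbert symbols $\langle\det (B')^{(n_i^*)},\det(\text{new block})\rangle$ produced at each step telescope to the prescribed $\prod_{i<s}\zeta_i^{m_i+m_{i+1}}$. Under the hypothesis of (b) I first split $(B')^{(n_s^*)}=(B')^{(n_t^*)}\perp C$ up to $\GL(F)$-equivalence; the complementary block $C$ is handled by the case-(a)-style computation, and the extra factor $\zeta_t=\eta_{(B')^{(n_t^*)}}$ is precisely what appears on the right.

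The main obstacle is the Hilbert-symbol bookkeeping in step (E3): at each stage one must simultaneously track the parity of the cumulative orders $\sum_{j\le i}m_jn_j$, the value of $\xi_{(B')^{(n_i^*)}}$ (which by (E2) is nonzero precisely when that sum is even), and the Hilbert-symbol cross-terms produced by adjoining new blocks, and then verify that these contributions assemble into the precise telescoping product prescribed by (E3)(a) and (E3)(b). Once Theorem \ref{th.1.6} supplies a reduced form that makes the block decomposition manifest, the calculation is lengthy but mechanical, and the whole argument amounts to combining Theorems \ref{th.1.1}, \ref{th.1.3}, \ref{th.1.4}, \ref{th.1.5}, and \ref{th.1.6} with standard identities for the Hilbert symbol.
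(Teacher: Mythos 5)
The paper itself states this proposition without proof (it is imported from \cite{IK1}), so your argument has to stand on its own. Your verification of (E1) and (E2) does: (E1) is definitional, and for (E2) the parity identity $\ord(D_A)\equiv\ord(\mathfrak{D}_A)\bmod 2$ is correct (the ideals $(4D_A)$ and $\mathfrak{D}_A$ differ by the square of a conductor), so Theorems \ref{th.1.1} and \ref{th.1.3} together with Definition \ref{def.1.3} give $\Delta(A)\equiv 1-\xi_A^2\bmod 2$, which is exactly (E2).

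The gap is in (E3), and it is not merely a matter of length. First, the ``multiplicative law'' $\eta_{A\perp C}=\eta_A\eta_C\langle\det A,\det C\rangle$ is the law for the Hasse invariant $\prod_{i<j}\langle b_i',b_j'\rangle$, not for the Clifford invariant of Definition \ref{def.1.4}, which carries the extra factors $\langle -1,-1\rangle^{[(n+1)/4]}\langle -1,\det B\rangle^{[(n-1)/2]}$; these do not cancel when passing from $A$, $C$ to $A\perp C$, and they are essential. Already for $B=\mathrm{diag}(1,u,p^a)$ over $\QQ_p$ ($p$ odd, $u\in\ZZ_p^\times$) one has $\zeta_1=\xi_{B^{(2)}}=\chi(-u)$ and $\zeta_2=\eta_B=\chi(-u)^a=\zeta_1^{m_1+m_2}$, whereas your cross term $\langle\det B^{(2)},\det C_2\rangle=\chi(u)^a$ differs by the omitted factor $\langle -1,p^a\rangle$; so the symbols you name do not telescope to $\prod_{i<s}\zeta_i^{m_i+m_{i+1}}$ as asserted. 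Second, the step converting $\langle x,D_A\rangle$ into $\xi_A^{\ord(x)}$ is valid only when $\xi_A\neq 0$, and the identification of the resulting exponent with $m_i+m_{i+1}$ is never justified: in case (a) the adjoined blocks have even size, so $\ord(\det C_{i+1})\equiv m_{i+1}n_{i+1}\equiv 0\bmod 2$, and the exponent $m_i+m_{i+1}$ in fact arises from a one-row-at-a-time induction (in effect \cite{IK1}, Lemma 3.4, which this paper invokes in the proof of Theorem \ref{th.4.3}), not from adjoining whole blocks. Finally, in case (b) you must also show that $\zeta_j\neq 0$ for $t<j<s$ under the stated parity hypothesis, since otherwise the right-hand side of (E3)(b) would vanish while $\zeta_s=\pm1$. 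These points are where the actual content of the proposition lies, and relegating them to a ``lengthy but mechanical'' computation leaves the proof of (E3) incomplete.
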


We  say that  $H_B$ is a naive  EGK data of  $B$ if $\Ups(H_B)=\EGK(B)$.
We note that $H_B$ is not necessarily uniquely determined by $B$.

We give an explicit formula for a naive EGK datum. We assume that $F$ is a finite unramified extension of $\QQ_2$ in Theorems \ref{th.4.1},\ref{th.4.2},\ref{th.4.3} and Example \ref{exp.4.1}.

\begin{theorem}
\label{th.4.1} Let $B=2^{k_1}C_1 \bot \cdots \bot 2^{k_r}C_r \in \calh_n(\frko)$ be  a pre-optimal form. 
Then there is an optimal basis $\{\phi_1,\ldots,\phi_n \}$ of $L_B$ such that  $\{\phi_1,\ldots,\phi_{n-n_r} \}$ is an optimal basis of $L_{B^{[r-1]}}$.
\end{theorem}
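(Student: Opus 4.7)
The plan is to apply Theorem \ref{th.2.1} directly, taking $B_1 = B^{[r-1]}$ and $B_2 = 2^{k_r}C_r$. With this choice the conclusion of Theorem \ref{th.2.1} is exactly the conclusion of Theorem \ref{th.4.1}, so the task reduces to verifying the two hypotheses of Theorem \ref{th.2.1} for this splitting. The base case $r=1$ is trivial, since any optimal basis of $L_B$ works vacuously.

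For hypothesis (1), I need integers $b_{n_1+1}, \ldots, b_n$ (where $n_1 = n - n_r$) with $(\GK(B^{[r-1]}), b_{n_1+1}, \ldots, b_n) \in \bfS(\{B\})$. Note that $B^{[r-1]}$ is itself pre-optimal, since conditions (PO1)--(PO6) are inherited by truncation, and hence Theorem \ref{th.3.1} applies to describe $\GK(B^{[r-1]})$. Therefore Proposition \ref{prop.3.1} applied to $B$ produces exactly the integers required, yielding hypothesis (1).

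For hypothesis (2), I need $a_{n_1+1} \leq m + 2$, where $m = k_r$ if $C_r$ is a unimodular diagonal matrix and $m = k_r - 2$ otherwise. I would verify this by a short case analysis using Theorem \ref{th.3.1} applied to $B$ at $s = r$:
\begin{itemize}
\item If $C_r \in \tfrac{1}{2}(S_2(\frko)_e \cap GL_2(\frko))$, then Theorem \ref{th.3.1}(1) gives $(a_{n-1}, a_n) = (k_r, k_r)$, so $a_{n_1+1} = k_r = m + 2$.
\item If $C_r$ is unimodular diagonal of degree $1$, then Theorem \ref{th.3.1}(2) bounds $a_n$ by $k_r + 2$, so $a_{n_1+1} = a_n \leq m + 2$.
\item If $C_r$ is unimodular diagonal of degree $2$, then Theorem \ref{th.3.1}(3) gives $a_{n_1+1} = a_{n-1} = k_r + 1 \leq m + 2$.
\end{itemize}
In every case the bound holds, so hypothesis (2) is satisfied.

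With both hypotheses verified, Theorem \ref{th.2.1} directly produces an optimal basis $\{\phi_i\}_{1 \le i \le n}$ of $L_B$ whose truncation $\{\phi_i\}_{1 \le i \le n - n_r}$ is an optimal basis of $L_{B^{[r-1]}}$, which is exactly the desired conclusion. The proof is therefore essentially a clean application of the machinery already developed in Sections 2 and 3; the only point requiring genuine care is coordinating the indexing conventions and matching the correct value of $m$ to the nature (degree and type) of the terminal block $C_r$. All of the hard analytic work has been absorbed into Theorem \ref{th.2.1}, Theorem \ref{th.3.1}, and Proposition \ref{prop.3.1}.
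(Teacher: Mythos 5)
Your proposal is correct and follows exactly the route the paper takes: the paper's entire proof is the one-line citation of Theorem \ref{th.2.1}, Theorem \ref{th.3.1}, and Proposition \ref{prop.3.1}, and your write-up simply makes explicit the verification of the two hypotheses of Theorem \ref{th.2.1} (including the correct matching of $m=k_r$ versus $m=k_r-2$ to the type of $C_r$) that the paper leaves implicit.
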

\begin{proof}
The assertion follows from Theorems \ref{th.2.1} and \ref{th.3.1}, and Proposition \ref{prop.3.1}
\end{proof}
By rewriting the above theorem we obtain, 
\begin{corollary}
\label{cor.4.1}
 Let the notation be as above.  Then there is an optimal form $\widetilde B$ which is equivalent to $B$ such that
$\widetilde B^{[i]}$ is an optimal form which is equivalent to $B^{[i]}$ for any $1 \le i \le r$.
\end{corollary}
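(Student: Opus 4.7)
The plan is to proceed by induction on $r$. The base case $r=1$ is immediate from Theorem \ref{th.4.1} applied to $B=2^{k_1}C_1$, since then $\widetilde B^{[1]}=\widetilde B$ and the conclusion reduces to ``$\widetilde B$ is optimal and equivalent to $B$''.

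For the inductive step, I would first observe that the truncation $B^{[r-1]}=2^{k_1}C_1\perp\cdots\perp 2^{k_{r-1}}C_{r-1}$ is again pre-optimal, because every one of the conditions (PO1)--(PO6) of Definition \ref{def.2.2} only refers to components with index at most $r-1$ (together with internal comparisons among them) and is therefore inherited from $B$. Applying Theorem \ref{th.4.1} to $B$, one obtains an optimal basis $\underline{\phi}=\{\phi_1,\ldots,\phi_n\}$ of $L_B$ whose initial segment $\{\phi_1,\ldots,\phi_{n-n_r}\}$ is an optimal basis of $L_{B^{[r-1]}}$. Let $\widetilde B$ denote the Gram matrix of $\underline{\phi}$; then $\widetilde B\sim B$ is optimal and its upper-left principal submatrix $\widetilde B^{(n-n_r)}$ is optimal and equivalent to $B^{[r-1]}$. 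By the inductive hypothesis applied to $B^{[r-1]}$, there is an optimal $\widetilde B_1\sim B^{[r-1]}$ such that $\widetilde B_1^{[i]}$ is optimal and equivalent to $B^{[i]}$ for every $1\le i\le r-1$.

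The key step is to glue $\widetilde B_1$ into $\widetilde B$ without destroying optimality on $L_B$. By Theorem \ref{th.1.2} there is $U_1\in G_{\GK(B^{[r-1]})}\subset \GL_{n-n_r}(\frko)$ with $\widetilde B^{(n-n_r)}[U_1]=\widetilde B_1$; set $U:=U_1\perp\mathbf{1}_{n_r}\in \GL_n(\frko)$. To see $U\in G_{\GK(B)}$, write $\GK(B)=(a_1,\ldots,a_n)$. Proposition \ref{prop.3.1} together with Lemma \ref{lem.2.2} yields $\GK(B^{[r-1]})=(a_1,\ldots,a_{n-n_r})$, and Theorem \ref{th.3.1} shows that the tail $a_{n-n_r+1},\ldots,a_n$ is constant (in each of the three cases for $C_r$ it forms a block of length $n_r=1$ or $n_r=2$ with a single common value). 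Since $U$ is block diagonal with vanishing off-diagonal blocks, the non-trivial entry constraints defining $G_{\GK(B)}$ reduce to those within the $U_1$ block (satisfied by hypothesis) and within the $\mathbf{1}_{n_r}$ block (trivial, as the corresponding $a_i$ are equal). Hence Theorem \ref{th.1.2} implies $\widetilde B[U]$ is optimal and equivalent to $B$. Replacing $\widetilde B$ by $\widetilde B[U]$, its upper-left $\widetilde n_i\times \widetilde n_i$ submatrix coincides with $\widetilde B_1^{[i]}$ for $i\le r-1$, giving all required equivalences, while $\widetilde B$ itself handles $i=r$.

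The hardest part, in my view, is the structural verification that $U_1\perp\mathbf{1}_{n_r}\in G_{\GK(B)}$: this rests entirely on the constancy of the final block of $\GK(B)$, which is precisely the content of Theorem \ref{th.3.1} read across all three cases of $C_r$. Without this constancy the gluing would be obstructed by the $(a_j-a_i)/2$ entry constraints in the definition of $G_{\underline{a}}$; with it, the rest of the argument is essentially bookkeeping.
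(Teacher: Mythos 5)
Your proof is correct and follows the route the paper intends (the corollary is stated without proof as a consequence of Theorem \ref{th.4.1}, to be obtained by iterating that theorem down the chain $B \supset B^{[r-1]} \supset \cdots$). Your explicit gluing step — using Theorem \ref{th.1.2} to replace $\widetilde B^{(n-n_r)}$ by the inductively constructed $\widetilde B_1$ via a block-diagonal element of $G_{\GK(B)}$, with $\GK(B^{[r-1]})=(a_1,\ldots,a_{n-n_r})$ supplied by Proposition \ref{prop.3.1} and Lemma \ref{lem.2.2} — is precisely the detail the paper leaves implicit, and it is handled correctly.
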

\begin{theorem}
\label{th.4.2} Let $B=2^{k_1}C_1 \bot \cdots \bot 2^{k_r}C_r$ be a pre-optimal form of degree $n$, and $\GK(B)$ as
\[\mathrm{GK}(B)=(\underbrace{m_1,\ldots,m_1}_{n_1},\ldots,\underbrace{m_s,\ldots,m_s}_{n_s})\]
with $m_1<\cdots<m_s$ and $n=n_1+\cdots+n_{s-1}+n_s$.
For $j=1,2,\ldots,s$ put
$$n_j^\ast=\sum_{u=1}^j n_u.$$ 
Then we have the following
\begin{itemize}
\item [(1)] For any $1 \le j \le s$ there is a positive  integer $l_j \le r$ such that $B^{(n_j^*)}=B^{[l_j]}$.
\item [(2)] For each $1 \le j \le s$ define $\zeta_j$ as
$$\zeta_j=\begin{cases} \eta_{B^{[l_j]}} & \text{ if } n_j^* \text{ is odd} \\
\xi_{B^{[l_j]}} & \text{ if } n_j^* \text { is even} 
\end{cases}.$$
Then $\mathrm{EGK}(B)=(n_1,\ldots,n_s;m_1,\ldots,m_s;\zeta_1,\ldots,\zeta_s)$. 
\end{itemize}
\end{theorem}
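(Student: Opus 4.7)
The plan is to reduce the statement to Theorem~\ref{th.3.1} for part~(1) and to Corollary~\ref{cor.4.1} together with the definition of $\EGK$ for part~(2). Throughout, write $\widetilde n_s=\deg C_1+\cdots+\deg C_s$ and $\GK(B)=(a_1,\ldots,a_n)$.

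For part~(1), I need to show that every index $n_j^\ast$ at which $\GK(B)$ strictly increases lies in the set $\{\widetilde n_1,\ldots,\widetilde n_r\}$ of pre-optimal block boundaries. When $\deg C_s = 2$---in both the unimodular-diagonal case and the $\tfrac12(S_2(\frko)_e\cap GL_2(\frko))$-case---parts~(1) and~(3) of Theorem~\ref{th.3.1} give $a_{\widetilde n_s - 1} = a_{\widetilde n_s}$, so no jump of $\GK(B)$ can occur inside such a block. Since $\GK(B)$ is non-decreasing, every jump must occur at some $\widetilde n_s$; one then defines $l_j$ to be the unique index with $\widetilde n_{l_j} = n_j^\ast$. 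Because $B^{[l_j]}$ is by definition the upper-left $n_j^\ast\times n_j^\ast$ block of $B$, this yields the identity $B^{[l_j]} = B^{(n_j^\ast)}$.

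For part~(2), I apply Corollary~\ref{cor.4.1} to obtain an optimal form $\widetilde B \sim B$ such that $\widetilde B^{[i]}$ is optimal and $\widetilde B^{[i]} \sim B^{[i]}$ for every $i$. In particular $\widetilde B^{[l_j]}$ is the upper-left $n_j^\ast\times n_j^\ast$ block of $\widetilde B$, so $\widetilde B^{[l_j]} = \widetilde B^{(n_j^\ast)}$. Since $\widetilde B$ is optimal with $\GK(\widetilde B) = \GK(B)$, Definition~\ref{def.1.5} yields
\[
\EGK(B) = \EGK(\widetilde B) = (n_1,\ldots,n_s;\, m_1,\ldots,m_s;\, \zeta_1',\ldots,\zeta_s'),
\]
where $\zeta_j' = \eta_{\widetilde B^{(n_j^\ast)}}$ when $n_j^\ast$ is odd and $\zeta_j' = \xi_{\widetilde B^{(n_j^\ast)}}$ when $n_j^\ast$ is even. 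Because $\eta$ and $\xi$ depend only on the $\GL_n(F)$-equivalence class of a non-degenerate half-integral symmetric matrix (see the discussion preceding Definition~\ref{def.1.4} and the definition of $\xi_B$), and $\widetilde B^{(n_j^\ast)} \sim B^{[l_j]}$, we conclude $\zeta_j' = \zeta_j$ as defined in the statement.

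The main obstacle is the bookkeeping needed in step~(1): one must verify that the identification $n_j^\ast = \widetilde n_{l_j}$ is consistent with the multiplicity information encoded in the $n_i$'s, namely that each pre-optimal block contributes exactly one level-set entry (possibly with multiplicity $2$). This follows cleanly from Theorem~\ref{th.3.1}, since a block with $\deg C_s = 1$ contributes a single new value at position $\widetilde n_s$, while a block with $\deg C_s = 2$ contributes two equal values at positions $\widetilde n_s - 1, \widetilde n_s$. Together with the explicit formulas of Theorem~\ref{th.3.1} these determine the level-set decomposition $(n_1,m_1),\ldots,(n_s,m_s)$ of $\GK(B)$, after which part~(2) is a formal consequence of Corollary~\ref{cor.4.1} and Theorem~\ref{th.1.4}.
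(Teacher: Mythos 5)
Your proposal is correct and follows essentially the same route as the paper: part (1) is deduced from the explicit formula of Theorem \ref{th.3.1} (jumps of $\GK(B)$ can only occur at block boundaries of the pre-optimal decomposition), and part (2) from Corollary \ref{cor.4.1} together with the $\GL$-invariance of $\xi$ and $\eta$ via Definition \ref{def.1.5}. The paper's proof is just a terser version of the same argument, so no further comparison is needed.
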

\begin{proof}
The assertion (1) follows from Theorem \ref{th.3.1}.  Let $\widetilde B$ be that in Corollary \ref{cor.4.1}. Then, we have $B^{[l_j]} \sim \widetilde B^{[l_j]}$. Thus the assertion (2) holds.
\end{proof}

\begin{theorem}
\label{th.4.3}
Let $B=2^{k_1}C_1 \bot \cdots \bot 2^{k_r}C_r$ be a pre-optimal form of degree $n$ with $n_i = \deg C_i$. Let $\GK(B)=(a_1,\ldots,a_n)$.
We define $\vep_i$ as
\[
\vep_i=\begin{cases}
1 & \text{ if } i=1 \\
\xi_{B^{(i)}}  & \text{ if } i \text{ is even and } i=n_1+\cdots+n_s \\ { } & \text{ with some } 1 \le s \le r \\
\eta_{B^{(i)}} & \text{ if } i \text{ is odd and } i=n_1+\cdots+n_s \ge 3 \\
{ } & \text{ with some } 1 \le s \le r \\
\eta_{B^{(i+1)}}\xi_{B^{(i+1)}}^{a_i} & \text{ if } i  \text{ is odd and }  i=n_1+\cdots+n_s-1 \ge 3 \\
{ } & \text{ with some } 1 \le s \le r  \text{ such that }  n_s=2 \\
{ } & \text { and } a_1+\cdots+a_{i+1} \text { is even}\\
0 & \text{ if } i \text{ is even and }  i=n_1+\cdots+n_s-1 \\
{ } & \text{ with some } 1 \le s \le r  \text{ such that } n_s=2 \\
{ } & \text { and } a_1+\cdots+a_{i} \text{ is odd}\\
\pm 1 & \text{ if } i=n_1+\cdots+n_s-1 \ge 2  \\
{ } &  \text{ with some } 1 \le s \le r \text{ such that } n_s=2 \\
{ } &  \text{ and } i+1+a_1+\cdots+a_{2[(i+1)/2]} \text { is } odd,
\end{cases}    
\]
and, for any $1 \le s \le r$, and put $H^{[s]}=(a_1,\ldots,a_{n_1+\cdots+n_s};\vep_1,\ldots,\vep_{n_1+\cdots+n_s})$. 
Then $H^{[s]}$ is a naive $\EGK$ datum of  $B^{[s]}$.
\end{theorem}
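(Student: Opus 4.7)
The plan is to proceed by induction on $s$, verifying two things simultaneously: that $H^{[s]}$ satisfies conditions (N1)--(N5) of Definition \ref{def:4.1}, and that $\Upsilon(H^{[s]}) = \mathrm{EGK}(B^{[s]})$. The base case $s=1$ can be handled by direct inspection of the single block $2^{k_1}C_1$.

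For the identification with $\mathrm{EGK}(B^{[s]})$, Theorem \ref{th.4.2} already gives
\[
\mathrm{EGK}(B^{[s]}) = (n_1,\ldots,n_s; m_1,\ldots,m_s;\zeta_1,\ldots,\zeta_s),
\]
with $\zeta_j = \eta_{B^{(n_j^*)}}$ when $n_j^*$ is odd and $\zeta_j = \xi_{B^{(n_j^*)}}$ when $n_j^*$ is even; Theorem \ref{th.3.1} identifies the block structure of $(a_1,\ldots,a_{n_s^*})$ with $(n_1,\ldots,n_s;m_1,\ldots,m_s)$. The definition of $\epsilon_i$ at every block boundary $i = n_j^*$ yields $\epsilon_i = \zeta_j$, so the two EGK data coincide, once we observe that the interior positions $i = n_j^*-1$ with $n_j=2$ are discarded by $\Upsilon$, which reads off only the distinct-value boundaries.

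Conditions (N1) and (N4) are immediate: (N1) follows from $\mathrm{GK}(B^{[s]})$ being non-decreasing, and (N4) is the first line of the case definition. Conditions (N2) and (N3) follow from the fact that $\xi_{B^{(i)}} \ne 0$ if and only if $F(\sqrt{D_{B^{(i)}}})/F$ is unramified, equivalently $\mathrm{ord}(\det B^{(i)})$ is even, equivalently $a_1 + \cdots + a_i$ is even (by Theorem \ref{th.1.1}), together with the fact that $\eta_{B^{(i)}} \in \{\pm 1\}$ for $i$ odd. The branches defining $\epsilon_i = 0$ and $\epsilon_i = \pm 1$ at interior positions are arranged precisely so that (N2) and (N3) hold (the ``$\pm 1$'' case occurs only when $i$ is even with $a_1+\cdots+a_i$ even).

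The main obstacle is (N5): for odd $i \ge 3$ with $a_1 + \cdots + a_{i-1}$ even, one must verify
\[
\epsilon_i = \epsilon_{i-2}\,\epsilon_{i-1}^{a_i + a_{i-1}}.
\]
The verification splits into cases according to which branch of the definition applies to each of $\epsilon_i, \epsilon_{i-1}, \epsilon_{i-2}$. The delicate subcase is when one of $i-1, i$ lies at an interior position of a block with $n_j = 2$, in which one of the three $\epsilon$'s falls in the ``$\pm 1$'' clause. Here the proof will rely on the multiplicativity of the Hasse--Witt (Clifford) invariant under the orthogonal decomposition $B^{(i+1)} = B^{(i-1)} \perp 2^{k_j}C_j$, on the explicit product formula for $\eta_B$ recalled just after Definition \ref{def.1.4}, and on Theorem \ref{th.1.1} for converting order conditions into parities of partial sums of $a_i$'s. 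Since a naive EGK datum is not uniquely determined by $B$, the sign in the ``$\pm 1$'' branch is free; we need only exhibit some choice for which (N5) is satisfied, and that choice is dictated by the identity (N5) itself.
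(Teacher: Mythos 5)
Your skeleton matches the paper's proof: induction on $s$, with the base case immediate, the identification $\Upsilon(H^{[s]})=\EGK(B^{[s]})$ coming from Theorems \ref{th.3.1} and \ref{th.4.2} (via the compatible optimal form of Corollary \ref{cor.4.1}), conditions (N1)--(N4) being routine, and (N5) being the crux. The one substantive divergence is how (N5) is settled in the delicate case ($i$ odd, $a_1+\cdots+a_{i-1}$ even, with $\vep_i$ and $\vep_{i-2}$ given by Clifford invariants rather than by the free $\pm1$ clause): the paper disposes of the required identity $\vep_i=\vep_{i-2}\vep_{i-1}^{a_i+a_{i-1}}$ by citing [IK1, Lemma 3.4], whereas you propose to rederive it from the product formula for $\eta_B$ and multiplicativity under $\perp$. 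That identity is not a formal consequence of multiplicativity --- it genuinely uses the reduced/optimal structure to control the dyadic Hilbert symbols $\langle b_i',b_j'\rangle$ --- so this is exactly where all the remaining work of the theorem lives, and your proposal names the tools without executing the computation. It is feasible (it is how [IK1, Lemma 3.4] is proved), but as written this step is a plan, not a proof.

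One smaller point: your justification of (N2)--(N3) runs through the chain ``$\xi_{B^{(i)}}\neq 0$ iff $F(\sqrt{D_{B^{(i)}}})/F$ is unramified iff $\ord(\det B^{(i)})$ is even.'' The second equivalence is false over a dyadic field (e.g.\ $\QQ_2(\sqrt{-1})/\QQ_2$ is ramified although $\ord(-1)=0$). The correct route, which is the one the paper uses, is Theorem \ref{th.1.1} combined with the case formula for $\Del(B)$ recorded before Lemma \ref{lem.2.2}: for $\deg B^{(i)}$ even one has $a_1+\cdots+a_i=\Del(B^{(i)})$, and that quantity is even exactly when $\xi_{B^{(i)}}\neq 0$. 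Your final conclusion is right, but the intermediate equivalence should be removed.
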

\begin{proof}
We prove the assertion by induction on $s$. The assertion clearly holds if $s=1$. Suppose that $s >1$ and that the assertion holds for $s-1$. Take an optimal form $\widetilde B$ satisfying the condition in Corollary \ref{cor.4.1}. Put $\widetilde n_s=n_1+\cdots+n_s$.
Suppose that $\deg C_s=1$ and
$\widetilde n_s$ is even. Then by definition,  $a_1+\cdots+a_{\widetilde n_s}$ is even if and only if $\xi_{\widetilde B^{[s]}} \not=0$. This is equivalent to saying that
$\vep_{\widetilde n_s} \not=0$. This implies that $H^{[s]}$ is a naive datum. Suppose that $\deg C_s=1$ and
$\widetilde n_s$ is odd. Then $\vep_{\widetilde n_s}=\eta_{B^{[s]}}$ and we easily see that $H^{[s]}$ is a naive EGK datum if $a_1+\cdots+a_{{\widetilde n_s}-1}$ is odd.
Suppose that $a_1+\cdots+a_{{\widetilde n_s}-1}$ is even.
Then $\vep_{\widetilde n_s-2}=\eta_{B^{[s-2]}}$ or $\eta_{B^{[s-1]}}\xi_{B^{[s-1]}}^{a_{\widetilde n_s-1}}$ according as 
$\deg C_{s-1}=1$ or $2$. In any case, by \cite{IK1}, Lemma 3.4, we have
$\vep_{\widetilde n_s}=\vep_{\widetilde n_s-2}\vep_{\widetilde n_s-1}^{ a_{\widetilde n_s}+a_{\widetilde n_s-1}}$. This implies that $H^{[s]}$ is a naive EGK datum. Similarly, we can prove that $H^{[s]}$ is a naive EGK datum in the case $\deg C_s=2$. Moreover in any case, we can easily prove that $\Upsilon_{\widetilde n_s}(H^{[s]})=\EGK(B^{[s]})$.
This completes the induction.
\end{proof}

\begin{example}
\label{exp.4.1} 
From now on for $i=1,2,3,4$ let $u_i \in \frko^{\times}$ and $K_i \in \frac{1}{2}(GL_2(\frko) \cap S_2(\frko)_e)$.\\
\begin{itemize}
\item[(1)] Let $B=2^{k_1}K_1 \bot 2^{k_2}K_2$ with $k_1 \le k_2$. Then, 
\[H_B=(k_1,k_1,k_2,k_2;1,\xi_{B^{(2)}},\eta_B\xi_B^{k_2},\xi_B)\] is a naive EGK datum of $B$.
\item[(2)] Let $B=2^{k_1}K_1 \bot 2^{k_2}u_2 \bot 2^{k_3}u_3$ with $k_1 \le k_2 \le k_3$, and put 
$H_B=(k_1,k_1,k_2,k_3';1,\xi_{B^{(2)}},\vep_3,\xi_B)$, where 
$$(k_3';\vep_3)=\begin{cases}
(k_3;\pm1) & \text{ if } k_2+k_3 \not\equiv 0 \text{ mod } 2 \\
(k_3+1;\pm1) & \text{ if } k_2+k_3 \equiv 0 \text{ mod } 2 \text{ and } u_2u_3 \equiv 1 \text{ mod } 4\\
(k_3+2;\eta_B\xi_B^{k_3}) & \text{ if } k_2+k_3 \equiv 0 \text{ mod } 2 \text{ and } u_2u_3 \equiv -1 \text{ mod } 4.
\end{cases}$$
Then, $H_B$ is a naive EGK datum of $B$. 
\item[(3)] Let $B=2^{k_1}u_1 \bot 2^{k_2}K_2 \bot 2^{k_3}u_3$ with $k_1 \le k_2 \le k_3+1$ and suppose that $B$ is pre-optimal. Put 
$H_B=(k_1,k_2,k_2,k_3';1,\vep_2,\eta_{B^{(3)}},\xi_B),$ where 
$$k_3'=\begin{cases}
k_3 & \text{ if } k_1+k_3 \not\equiv 0 \text{ mod } 2 \\
k_3+1& \text{ if } k_2+k_3 \equiv 0 \text{ mod } 2 \text{ and } u_2u_3 \equiv 1 \text{ mod } 4\\
k_3+2 & \text{ if } k_2+k_3 \equiv 0 \text{ mod } 2 \text{ and } u_2u_3 \equiv -1 \text{ mod } 4,
\end{cases}$$
and
$$\vep_2=\begin{cases}
0 & \text{ if } k_1+k_2 \not\equiv 0 \text{ mod } 2 \\
\pm1 & \text{ if } k_1+k_2 \equiv 0 \text{ mod } 2.
\end{cases}$$
Then $H_B$ is a naive EGK datum of $B$. 
\item[(4)] Let  $B=2^{k_1}u_1 \bot 2^{k_2}u_2 \bot 2^{k_3}K_3$ with $k_1 \le k_2 < k_3$ and suppose that $B$ is pre-optimal. Put
$H_B=(k_1,k_2',k_3,k_3;1,\xi_{B^{(2)}},\vep_3,\xi_B)$, where 
$$(k_2';\vep_3)=\begin{cases}
(k_2;\pm 1) & \text{ if } k_1+k_2 \not\equiv 0 \text{ mod } 2 \\
(k_2+1;\pm1) & \text{ if } k_1+k_2 \equiv 0 \text{ mod } 2 \text{ and } u_1u_2 \equiv 1 \text{ mod } 4\\
(k_2+2;\eta_B\xi_B^{k_3}) & \text{ if } k_1+k_2 \equiv 0 \text{ mod } 2 \text{ and } u_1u_2 \equiv -1 \text{ mod } 4.
\end{cases}$$
Then $H_B$ is a naive EGK datum of $B$. 
\item[(5)] $B=2^{k_1}u_1 \bot 2^{k_2}u_2 \bot 2^{k_3}u_3 \bot 2^{k_3}u_4$ with $k_1 \le k_2< k_3 \le k_4$ and 
suppose that $B$ is pre-optimal. 
\begin{itemize}
\item [(5.1)] Suppose that $k_3=k_4$ and $\xi_{B^{(2)}}=\xi_B=0$.  Put
$H_B=(k_1,k_2',k_3+1,k_3+1;1,0,\pm 1,0)$, where 
$$k_2'=\begin{cases}
k_2    & \text{ if } k_1+k_2 \not\equiv 0 \text{ mod } 2 \\
k_2+1& \text{ if } k_1+k_2 \equiv 0 \text{ mod } 2.
\end{cases}$$
Then $H_B$ is a naive EGK datum of $B$.
\item[(5.2)] Suppose that $B$ does not satisfy the condition (5.1). 
Put  $H_B=(k_1,k_2',k_3',k_4';1,\xi_{B^{(2)}},\eta_{B^{(3)}},\xi_B)$, where
$$(k_2',k_3',k_4')=\begin{cases}
(k_2,k_3+2,k_4+2)    & \text{ if } k_1+k_2 \not\equiv 0 \text{ mod } 2 \\
(k_2+1,k_3+1,k_4+2) & \text{ if } k_1+k_2 \equiv 0 \text{ mod } 2 \text{ and } \xi_{B^{(2)}}=0\\
(k_2+2,k_3,k_4+1) & \text{ if }  \xi_{B^{(2)}}\not=0 \text{ and } \xi_B=0\\
(k_2+1,k_3,k_3+2) & \text{ if }  \xi_{B^{(2)}}\not=0 \text{ and } \xi_B\not=0.
\end{cases}$$
Then $H_B$ is a naive EGK datum of $B$.
\end{itemize}
\item[(6)] Let $B=2^{k_1}u_1 \bot 2^{k_2}u_2 \bot 2^{k_2}u_3 \bot 2^{k_3}u_4$ with $k_1 < k_2 < k_3$ and suppose that $B$ is pre-optimal. 
\begin{itemize}
\item[(6.1)] Suppose that $k_1+k_2$ is even.  
Put  $H_B=(k_1,k_2+1,k_2+1,k_3';1,0,\eta_{B^{(3)}},\xi_B)$, where
$$k_3'=\begin{cases}
k_3& \text{ if } k_1+k_3 \not\equiv 0 \text{ mod } 2\\
k_3+1 & \text{ if }  k_1+k_3 \equiv 0 \text{ mod } 2 \text { and } \xi_B=0\\
k_3+2 & \text{ if }   \xi_B\not=0.
\end{cases}$$
Then $H_B$ is a naive EGK datum of $B$.
\item[(6.2)] Suppose that  $k_1+k_2$ is odd.  
Put  $H_B=(k_1,k_2,k_2+2,k_3';1,0,\eta_{B^{(3)}},\xi_B)$, where
$$k_3'=\begin{cases}
k_3& \text{ if } k_1+k_3 \not\equiv 0 \text{ mod } 2\\
k_3+1 & \text{ if }  k_1+k_3 \equiv 0 \text{ mod } 2 \text { and } \xi_B=0\\
k_3+2 & \text{ if }   \xi_B\not=0.
\end{cases}$$
Then $H_B$ is a naive EGK datum of $B$.
\end{itemize}
\end{itemize}
\end{example}

Finally, we recall a result in the non-dyadic case (cf. \cite{IK1}, Proposition 6.1).

\begin{theorem}
\label{th.4.4}
Assume  that $F$ is non-dyadic field.
Let $T=(t_1)\perp \cdots \perp (t_n)$ be a diagonal matrix such that $\ord(t_1)\leq \ord(t_2)\leq \cdots \leq \ord(t_n)$.
Put $a_i=\ord(t_i)$ and 
\[
\vep_i=\begin{cases} 
\xi_{T^{(i)}} & \text{ if $i$ is even,} \\
\eta_{T^{(i)}} & \text{ if $i$ is odd.} 
\end{cases}
\]
Then $(a_1, \ldots, a_n; \vep_1, \ldots, \vep_n)$ is a naive EGK datum of $T$.
\end{theorem}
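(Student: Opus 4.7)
The plan is to verify the defining conditions (N1)--(N5) of Definition \ref{def:4.1} directly for the tuple $H=(a_1,\ldots,a_n;\vep_1,\ldots,\vep_n)$, and then to check that $\Ups(H)=\EGK(T)$. The calculations are simpler than in the dyadic case because everything is already controlled by diagonalization.

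First I would establish $\GK(T)=(a_1,\ldots,a_n)$ and the optimality of $T$. The sequence $(a_1,\ldots,a_n)$ clearly belongs to $S(T)$, since the diagonal entries have exactly order $a_i$ and the off-diagonal entries vanish. Theorem \ref{th.1.1} gives the equality $|\GK(T)|=\Delta(T)=\ord(t_1\cdots t_n)+c$ for the appropriate correction term $c$, and a direct comparison with $|(a_1,\ldots,a_n)|$ forces maximality of $(a_1,\ldots,a_n)$ in $\bfS(\{T\})$ with respect to $\succeq$. This yields (N1) automatically.

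Next, for (N2) I would use that $D_{T^{(i)}}=(-4)^{i/2}t_1\cdots t_i$ for even $i$, and since $F$ is non-dyadic, $\ord(-4)=0$, so $\ord(D_{T^{(i)}})=a_1+\cdots+a_i$. In a non-dyadic field, a quadratic extension $F(\sqrt{D})/F$ is ramified precisely when $\ord(D)$ is odd, so $\xi_{T^{(i)}}\neq 0$ iff $a_1+\cdots+a_i$ is even. Conditions (N3) and (N4) are immediate: $\eta_B\in\{\pm 1\}$ is the Hasse invariant, and for $n=1$ the defining formula collapses to an empty product, giving $\vep_1=1$.

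The heart of the argument is (N5): for odd $i\geq 3$ with $a_1+\cdots+a_{i-1}$ even, one must show
\[
\eta_{T^{(i)}}=\eta_{T^{(i-2)}}\,\xi_{T^{(i-1)}}^{a_{i-1}+a_i}.
\]
Writing $i=2k+1$ and applying the Scharlau formula from the introduction, the ratio $\eta_{T^{(i)}}/\eta_{T^{(i-2)}}$ expands, using $\det T^{(i)}=\det T^{(i-2)}\cdot t_{i-1}t_i$, into a product of Hilbert symbols involving $-1$, $\det T^{(i-2)}$, $t_{i-1}$, and $t_i$. Under the hypothesis $a_1+\cdots+a_{i-1}$ even, we have $\xi_{T^{(i-1)}}\neq 0$, and $\xi_{T^{(i-1)}}$ is itself expressible as a Hilbert symbol of the form $\langle u_0, D_{T^{(i-1)}}\rangle$ (with $u_0$ a non-square unit). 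Since in a non-dyadic local field Hilbert symbols are computed from $(\text{square class})\times(\text{parity of valuation})$ alone, the power $\xi_{T^{(i-1)}}^{a_{i-1}+a_i}$ matches the collected Hilbert symbol contribution from the pair $(t_{i-1},t_i)$.

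Finally, once $H$ is verified to be a naive EGK datum, $\Ups(H)=\EGK(T)$ is essentially a bookkeeping statement: grouping $(a_1,\ldots,a_n)$ into maximal constant runs produces the partition $(n_1,\ldots,n_r)$, the jump points $n_s^\ast$ are precisely where $a_{n_s^\ast}<a_{n_s^\ast+1}$, and by Theorems \ref{th.1.3} and \ref{th.1.4}, $\zeta_s(T)$ equals $\xi_{T^{(n_s^\ast)}}$ or $\eta_{T^{(n_s^\ast)}}$ according to parity, matching $\vep_{n_s^\ast}$. The main obstacle is the Hilbert symbol identity in Step~(N5); although standard, it requires care in tracking the $\langle -1,-1\rangle^k$ factors arising from the explicit formula for $\eta$ and in matching the power of $\xi_{T^{(i-1)}}$ with the discrepancy in the pairwise Hilbert symbols $\langle t_j,t_{i-1}\rangle$, $\langle t_j,t_i\rangle$ for $j<i-1$.
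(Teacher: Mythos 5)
The paper itself offers no proof of this statement---it is recalled from [IK1, Proposition~6.1]---so your direct verification of (N1)--(N5) is a legitimate self-contained alternative. Most of it is sound: the computation of $\ord(D_{T^{(i)}})$, the characterization of ramification by parity of the valuation in the non-dyadic case, and the reduction of (N5) to a Hilbert-symbol identity are all correct in outline (though (N5) is only sketched, the tame-symbol computation you describe does close it).

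There is, however, one genuine gap: the step establishing $\GK(T)=(a_1,\ldots,a_n)$. You argue that $\ua:=(a_1,\ldots,a_n)\in S(T)$, hence $\GK(T)\succeq\ua$, and that Theorem~\ref{th.1.1} gives $|\GK(T)|=\Delta(T)=a_1+\cdots+a_n=|\ua|$, and you conclude that this ``forces maximality.'' It does not: two distinct non-decreasing sequences can have equal sums while one strictly dominates the other lexicographically (e.g.\ $(2,2)\succneqq(1,3)$ with both summing to $4$), so $\GK(T)\succeq\ua$ together with $|\GK(T)|=|\ua|$ does not yield $\GK(T)=\ua$. To close this you need the stronger partial-sum bound: for any $U\in\GL_n(\frko)$ and any $\uc=(c_1,\ldots,c_n)\in S(T[U])$, every $k\times k$ minor of $T[U]$ has order at least $c_1+\cdots+c_k$ (here non-dyadicity removes the factor $2$ from the off-diagonal condition), while the $k$-th determinantal divisor is a $\GL_n(\frko)$-invariant equal to $\frkp^{a_1+\cdots+a_k}$ for the diagonal matrix $T$; hence $c_1+\cdots+c_k\le a_1+\cdots+a_k$ for all $k$, which rules out any $\uc\succneqq\ua$ in $\bfS(\{T\})$. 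With that inserted, $T$ is optimal, Definition~\ref{def.1.5} applies to $T$ itself, and the final bookkeeping step $\Ups(H)=\EGK(T)$ goes through as you describe. You should also actually carry out the Hilbert-symbol identity in (N5) rather than assert it, since it is the only place where the specific formula for $\eta$ (including the $\langle-1,-1\rangle$ powers) is used.
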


\section{Explicit formula for the Siegel series}
 In this section we give an explicit formula for the Siegel series in terms of naive EGK data in the case $F=\QQ_p$.
Once for all, we fix an additive character ${\bf e}_p:\QQ_p \longrightarrow \CC^\times$ of order zero, that is, a character such that
\[\frko =\{ a \in \QQ_p \  | \ {\bf e}_p(ax)=1 \ \text{ for  any} \ x \in \ZZ_p \}.\]
For  a half-integral matrix $B$ of degree $n$ over $\ZZ_p$ define the local Siegel series $b(B,s)$ by 
\[b(B,s)= \sum_{R} {\bf e}_p({\rm tr}(BR))\mu(R)^{-s},\]
where $R$ runs over a complete set of representatives of ${\rm Sym}_n(\QQ_p)/{\rm Sym}_n(\ZZ_p)$ and $\mu(R)=[R\ZZ_p^n+\ZZ_p^n:\ZZ_p^n]$. For a non-degenerate half-integral matrix $B$ of degree $n$ over $\ZZ_p$ define a polynomial $\gamma(B,X)$ in $X$ by 
\[\gamma(B,X)=
\begin{cases}
(1-X)\prod_{i=1}^{n/2}(1-p^{2i}X^2)(1-p^{n/2}\xi_B X)^{-1} & \text{ if $n$ is  even} \\ 
(1-X)\prod_{i=1}^{(n-1)/2}(1-p^{2i}X^2) & \text{ if $n$ is  odd.} \end{cases}\] 
Then it is shown by \cite{shimura97} that there exists a polynomial $F(B,X)$ in $X$ with coefficients in $\ZZ$ such that 
\[F(B,p^{-s})=\frac{b(B,s)}{\gamma(B,p^{-s})}.\]
 We define a symbol $X^{1/2}$ so that $(X^{1/2})^2=X$.
We define $\widetilde F(B,X)$ as
\[\widetilde F(B,X)=X^{-\frke_B/2}F(B,p^{-(n+1)/2}X).\]
We note that $\widetilde F(B,X) \in \QQ[p^{1/2}][X,X^{-1}]$ if $n$ is even, and
$\widetilde F(B,X) \in \QQ[X^{1/2},X^{-1/2}]$ if $n$ is odd.

\begin{definition} 
\label{def.5.1} 
For integers $e,\widetilde e$,  a real number $\xi$, we  define  rational functions $C(e,\widetilde e,\xi;X)$ 
and $D(e,\widetilde e,\xi;X)$ in  $X^{1/2}$ by 
\[C(e,\widetilde e,\xi;X)={p^{\widetilde e/4}X^{-(e- \widetilde e)/2-1}(1-\xi p^{-1/2} X)  \over X^{-1}-X} \]
and
\[D(e,\widetilde e,\xi;X)= {p^{\widetilde e/4}X^{-(e-\widetilde e)/2}   \over 1- \xi X} .\]
\end{definition}
For a positive integer $i$  put 
$$C_i(e,\widetilde e,\xi;X)= \begin{cases}
C(e,\widetilde e,\xi;X) &  \text { if  $i$ is even } \\
D(e,\widetilde e,\xi;X) & \text{ if $i$ is odd.}
\end{cases}.$$
\begin{definition}
\label{def.5.2}
For a sequence $\underline a=(a_1,\ldots,a_n)$ of integers and an integer $1 \le i \le n$, we define $\frke_i=\frke_i(\underline a)$ as
$$\frke_i=
\begin{cases} a_1+\cdots +a_i  & \text{ if  $i$ is odd} \\
2[(a_1+\cdots+a_i)/2] & \text{ if $i$ is even.}
\end{cases}$$
 We also put $\frke_0=0$.

For a naive EGK datum $H=(a_1,\ldots,a_n;\vep_1,\ldots,\vep_n)$ we define a rational function $\calf(H;X)$ in $X^{1/2}$ as follows:
First we define
$$\calf(H;X)=X^{-a_1/2}+X^{-a_1/2+1}+\cdots+X^{a_1/2-1}+X^{a_1/2}$$
if $n=1$. Let  $n>1$. Then $H'= (a_1,\ldots,a_{n-1};\vep_1,\ldots,\vep_{n-1})$ is a naive EGK datum of length $n-1$. 
Suppose that $\calf(H';X)$ is defined for $H'$. Then, we define $\calf(H;X)$ as
\begin{align*}
&\calf(H;X)=C_n(\frke_n,\frke_{n-1},\xi;X)\calf(H';p^{1/2}X)\\
&+\zeta C_n(\frke_n,\frke_{n-1},\xi;X^{-1})\calf(H';p^{1/2}X^{-1}),
\end{align*}
where $\xi=\vep_n$ or $\vep_{n-1}$ according as $n$ is even or odd, and $\zeta=1$ or $\vep_n$ according as $n$ is even or odd.

\end{definition}

First we easily see  the following (e.g. \cite[Example (1)]{Kat}).
\begin{proposition}
\label{prop.5.1}
Let  $B =(b) \in \calh_1(\ZZ_2)$ with $\ord(b)=a_1$. 
Then, we have
\[
\widetilde F(B,X)=\sum_{i=0}^{a_1} X^{i-(a_1/2)}.
\]
\end{proposition}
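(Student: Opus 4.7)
The plan is to verify the identity by direct computation from the definition of $\widetilde F(B,X)$ -- the normalized Siegel series introduced in \cite{Kat} -- applied to a scalar half-integral symmetric matrix $B=(b)$ with $\ord(b)=a_1$, rather than invoking any of the machinery from Sections 1--4. This serves purely as the base case ($n=1$) of an induction on the degree underpinning the explicit formula for Theorem \ref{th.5.5}.

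First I would unfold the definition of the Siegel series, grouping the contributions according to the $p$-adic denominator of the symmetric matrix $t\in\QQ_p/\ZZ_p$ being summed over. Writing $b=p^{a_1}u$ with $u\in\ZZ_p^\times$, the inner Gauss-type sum
\[
\sum_{k\in(\ZZ/p^r\ZZ)^\times} \mathbf{e}_p(bk/p^r)
\]
evaluates to $p^{r-1}(p-1)$ when $r\le a_1$, to $-p^{a_1}$ when $r=a_1+1$, and to $0$ for $r\ge a_1+2$. Collecting these contributions yields
\[
b((b),s)=1+\sum_{r=1}^{a_1}p^{r-1}(p-1)p^{-rs}-p^{a_1}p^{-(a_1+1)s},
\]
which a routine geometric manipulation (clearing the denominator $1-p^{1-s}$ and factoring) collapses to the closed form
\[
b((b),s)=(1-p^{-s})\sum_{i=0}^{a_1}(p^{1-s})^i.
\]

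Finally I would divide out the prescribed normalization factor relating $b(B,s)$ to $\widetilde F(B,X)$ from \cite{Kat}: for $n=1$ this is precisely the $(1-p^{-s})$ prefactor together with the $p^{a_1/2}$-shift that renders the quotient symmetric under the functional equation $s\mapsto 1/2-s$. Setting $X=p^{-s}$ and applying the shift converts $\sum_{i=0}^{a_1}(pX)^i$ into the Laurent polynomial $\sum_{i=0}^{a_1}X^{i-a_1/2}$ displayed in the statement.

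No genuine obstacle arises: the entire argument is a mechanical geometric-series calculation, and the only care required is the bookkeeping of the normalization factor and the half-integer power $X^{a_1/2}$, both of which are chosen precisely so that $\widetilde F(B,X)$ is invariant under $X\mapsto X^{-1}$, consistent with the local functional equation of the Siegel series.
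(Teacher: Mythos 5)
Your proposal is correct: the Ramanujan-sum evaluation, the resulting expression $b((b),s)=(1-p^{-s})\sum_{i=0}^{a_1}(p^{1-s})^i$, and the passage to $\widetilde F$ via the normalizing factor and the symmetrizing shift $X^{-a_1/2}$ all check out. The paper offers no proof at all (it states the proposition as "easily seen"), and your computation is precisely the standard verification the authors are implicitly invoking, so there is nothing to compare beyond noting that you have filled in the omitted details correctly.
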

We give induction formulas for $\widetilde F(B,X)$.   First we review induction formulas in \cite{Kat}.

\begin{theorem}
\label{th.5.1}
Let $B=2^{k_1}J_1 \bot \cdots 2^{k_r}J_r \in \calh_n(\ZZ_2)$ be a weak canonical form with $J_r$ a unimodular diagonal matrix.

\begin{itemize}
\item[(1)] Suppose that $\deg J_r=1$. Put
\[a_n=\begin{cases} 
k_r+2 & \text{ if } n \text{ is odd and } \ord(\det B^{(n-1)}) \text{ is odd} \\
{ } & \text{ or } n \text{ is even } \xi_{B} \not=0 \\
k_r+1 & \text{ if } n \text{ is odd, } \ord(\det B^{(n-1)}) \text{ is even and } \xi_{B^{(n-1)}}=0 \\
{ } & \text{ or } n \text{ is even and } \xi_{B}=0 \\
k_r & \text{ if } n \text{ is odd and } \xi_{B^{(n-1)}}\not=0 \\
{ } & \text{ or } n \text{ is even and } \ord(\det B) \text{ is odd},
\end{cases} 
\] 
and $\frke_i=\frke_i(GK(B^{(n-1)},a_n)$ for $i=n-1,n$.  Then we have
\begin{align*}
\widetilde F(B,X)
=&C_n(\frke_n,\frke_{n-1},\xi;X)\widetilde F(B^{(n-1)}, 2^{1/2} X) \notag \\
&+\eta C_n(\frke_n,\frke_{n-1},\xi;X^{-1})\widetilde F(B^{(n-1)},2^{1/2} X),\notag
\end{align*}
where $\xi=\begin{cases} \xi_B & \text{ if } n \text{ is even}\\
\xi_{B^{(n-1)}} & \text{ if } n \text{ is odd},
\end{cases}$
and 
$\eta=\begin{cases} 1 & \text{ if } n \text{ is even}\\
\eta_{B} & \text{ if } n \text{ is odd.}
\end{cases}$
\item[(2)] Suppose that $J_r=2$ and put $J_r=u_1 \bot u_2$.
\begin{itemize}
\item[(2.1)] Suppose that either $n$ is even and $\xi_B=\xi_{B^{(n-2)}}=0$, or  $n$ is odd and $\mathrm{ord}(\det B^{(n-2)})+k_r$ is even. Let $(a_{n-1},a_n)=(k_r,k_r)$ and
$\frke_i =\frke_i(GK(B^{(n-2)},a_{n-1},a_n)$ for $i=n-2,n-1,n$. Then we have 
\begin{align*}
& \widetilde F(B,X) \\
&=C_n(\frke_n,\frke_{n-1},0;X)C_{n-1}(\frke_{n-1},\frke_{n-2},0;2^{1/2}X) \widetilde F(B^{(n-2)},2X) \\
&+C_n(\frke_n,\frke_{n-1},0;X^{-1})C_{n-1}(\frke_{n-1},\frke_{n-2},0;2^{1/2}X^{-1}) \widetilde F(B^{(n-2)},2X^{-1}) \\
&+\{ C_n(\frke_n,\frke_{n-1},0;X)\eta' C_{n-1}(\frke_{n-1},\frke_{n-2},0;(2^{1/2}X)^{-1})\\
&+\eta C_n(\frke_n,\frke_{n-1},0;X^{-1}) C_{n-1}(\frke_{n-1},\frke_{n-2},0;(2^{1/2}X^{-1})^{-1})\}
\widetilde F(B^{(n-2)},X)
\end{align*}
where 
$\eta=\begin{cases} 1 & \text{ if } n \text{ is even}\\
\eta_{B} & \text{ if } n \text { is odd,}
\end{cases}$\\
and 
$$\eta'=\begin{cases} \pm 1 &  \text{ if } n \text{ is even  } \\
\eta_{B^{(n-2)}} & \text{ if } n \text{ is odd.}
\end{cases}$$
\item[(2.2)] Suppose that $B$ does not satisfy the condition in (2.1). If $n$ is even, put 
$$(a_{n-1},a_n)=
\begin{cases}
(k_r+1,k_r+2) & \text{ if }  \xi_{B^{(n-2)}}=0 \text{ and } \xi_B \not=0 \\
(k_r,k_r)  & \text{ if } \xi_{B^{(n-2)}}\not=0 \text{ and } \mathrm{ord}(\det B) \text{ is odd} \\
(k_r,k_r+1)  & \text{ if } \xi_{B^{(n-2)}}\not=0 \text{ and } \mathrm{ord}(\det B) \text{ is even } \\
{} & \text{  and } \xi_B=0\\
(k_r,k_r+2)  & \text{ if } \xi_{B^{(n-2)}}\not=0 \text{ and }  \xi_B\not=0.
\end{cases}$$
If $n$ is odd, put $(a_{n-1},a_n)=(k_r,k_r+2)$.
In both cases, put 
$\frke_i=\frke_i((\GK(B^{(n-2)}),a_{n-1},a_n)$ for $i=n-2,n-1,n$.
  Then we have 
\begin{align*}
& \widetilde F(B,X) \\
&=C_n(\frke_n,\frke_{n-1},\xi_B;X)C_{n-1}(\frke_{n-1},\frke_{n-2},\xi_{B^{(n-2)}};2^{1/2}X) \widetilde F(B^{(n-2)},2X) \\
&+C_n(\frke_n,\frke_{n-1},\xi_Bi;X^{-1})C_{n-1}(\frke_{n-1},\frke_{n-2},\xi_{B^{(n-2)}};2^{1/2}X^{-1}) \widetilde F(B^{(n-2)},2X^{-1}) \\
&+\{ C_n(\frke_n,\frke_{n-1},\xi_B;X)\eta_{B^{(n-1)}}C_{n-1}(\frke_{n-1},\frke_{n-2},0;(2^{1/2}X)^{-1})\\
&+ C_n(\frke_n,\frke_{n-1},\xi_B;X^{-1}) C_{n-1}(\frke_{n-1},\frke_{n-2},0;(2^{1/2}X^{-1})^{-1})\}
\widetilde F(B^{(n-2)},X).
\end{align*}

\end{itemize}
\end{itemize}
\end{theorem}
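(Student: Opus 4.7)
The plan is to derive Theorem \ref{th.5.1} as a direct translation of the induction formulas of \cite{Kat} (in particular [\cite{Kat}, Theorem 4.3]) from the language of Jordan blocks into the language of the functions $C_n$, $D_n$ and the parities $\frke_n$ defined in Definitions \ref{def.5.1}--\ref{def.5.2}. The substantive input beyond \cite{Kat} is the identification, case by case, of the last GK entries $a_n$ (or $(a_{n-1},a_n)$) and of the invariants $\xi,\eta$ that appear in the coefficients; these values are supplied by Theorem \ref{th.3.1} together with Lemma \ref{lem.2.2}.

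First I would record the values of $a_n$ (and, in part (2), of $(a_{n-1},a_n)$) given a weak canonical form $B=2^{k_1}J_1\perp\cdots\perp 2^{k_r}J_r$ whose last block is a unimodular diagonal matrix. Since any weak canonical form is $\GL_n(\frko)$-equivalent to a pre-optimal form whose final component is precisely $2^{k_r}J_r$ (Proposition \ref{prop.2.1}), Theorem \ref{th.3.1} applied to this pre-optimal rearrangement yields the explicit $a_n$ listed in the statement: the three-fold case distinction in (1) matches the three parity/$\xi$-regimes of Theorem \ref{th.3.1}(2), and the four-fold distinction in (2.2) together with case (2.1) matches Theorem \ref{th.3.1}(3) (and its analogue for $\deg J_r=2$, split off from the corresponding $\xi_{B^{(n-2)}}$-alternatives). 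From these $a_n$'s one reads off $\frke_{n-1}$ and $\frke_n$ from Definition \ref{def.5.2}.

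Next I would invoke the induction formula of \cite{Kat}. That formula expresses $\widetilde F(B,X)$ as a sum of two (in case (1)) or three (in case (2)) terms, each a product of a ``boundary coefficient'' depending on $k_r$, $\xi_B$, $\xi_{B^{(n-1)}}$, $\xi_{B^{(n-2)}}$, $\eta_B$, $\eta_{B^{(n-1)}}$, and a shifted value of $\widetilde F(B^{(n-1)},\cdot)$ or $\widetilde F(B^{(n-2)},\cdot)$. A direct computation (using the definitions of $C$ and $D$) shows that each boundary coefficient of \cite{Kat} equals the corresponding $C_n(\frke_n,\frke_{n-1},\xi;X^{\pm 1})$ for the particular $\xi$ dictated by the case (and, in part (2), similarly for $C_{n-1}(\frke_{n-1},\frke_{n-2},\xi;X^{\pm 1})$). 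Here the parity of $n$ dictates whether $C_n$ specializes to $C$ or to $D$, which is exactly why the choices of $\xi\in\{\xi_B,\xi_{B^{(n-1)}}\}$ and $\eta\in\{1,\eta_B\}$ depend on $n\bmod 2$ as stated.

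The bookkeeping is the main obstacle: one must match the nine (case (1): three; case (2.2): four; plus the two sub-branches of (2.1)) regimes of \cite{Kat} with the correct values of $(\frke_{n-1},\frke_n,\xi)$, and one must check that the parities of $\frke_{n-1}-\frke_{n-2}$ and $\frke_n-\frke_{n-1}$ (appearing implicitly in the exponents $-(e-\widetilde e)/2$ inside $C$ and $D$) are compatible with the values of $a_{n-1},a_n$ determined above. Once these compatibilities are verified, the formulas in (1), (2.1) and (2.2) are simply the rewriting of [\cite{Kat}, Theorem 4.3]. The verification is purely mechanical, and I would organize it as a single table indexed by the parity of $n$, the value of $\xi_{B^{(n-2)}}$, and the value of $\xi_B$, so that each branch is treated once and for all.
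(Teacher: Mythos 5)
Your proposal is correct and takes essentially the same route as the paper: the paper's proof of Theorem \ref{th.5.1} consists of the single remark that (1) follows from [\cite{Kat}, Theorem 4.1] and (2) from [\cite{Kat}, Theorem 4.2] (the induction formulas, rather than the closed formula of [\cite{Kat}, Theorem 4.3] that you name), and your sketch merely spells out the translation of those formulas into the $C_n$, $\frke_i$ notation that the paper's citation leaves implicit.
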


\begin{proof}
(1) follows from    \cite[Theorem 4.1]{Kat}.
(2) follows from   \cite[Theorem 4.2]{Kat}.
\end{proof}

\begin{theorem}
\label{th.5.2}
Let $B=2^{k_1}J_1 \bot \cdots 2^{k_r}J_r \in \calh_n(\ZZ_2)$ be a weak canonical form, and suppose that 
$k_{s+1}=\cdots=k_r$ and $J_{s+1}, \cdots, J_r \in \frac{1}{2}(S_2(\ZZ_2)_e \cap GL_2(\ZZ_2))$ for some $0 \le s \le r-1$.

\begin{itemize}
\item[(1)] Let $k_s=k_r-1$, $J_s=\bot_{j=1}^{n_s} u_j$  with $1 \le n_s \le 2$ and $u_1,u_2 \in \frko^{\times}$ and  
put 
\[B_1=2^{k_1} J_1 \bot \cdots \bot 2^{k_{s-1}}J_{s-1} \bot 2^{k_s} u_{n_s-1} \bot 2^{k_r} J_{s+1} \bot \cdots \bot 2^{k_r} J_r.\]
 Here we make the convention that 
\[2^{k_1} J_1 \bot \cdots \bot 2^{k_{s-1}}J_{s-1} \bot 2^{k_s} u_{n_s-1}=2^{k_1} J_1 \bot \cdots \bot 2^{k_{s-1}}J_{s-1}\] if $n_s=1$. Suppose that one of the following conditions hold:
\begin{itemize}
\item[(1.1)] $n$ is even and $\mathrm{ord}(\det B)$ is even.
\item[(1.2)] $n$ is odd, $\xi_{B_1}=0$.
\end{itemize} 
Put
$$a_n=\begin{cases} k_r+1 & \text{ if } n \text{ is even and } \xi_B \not=0\\
k_r  & \text{ if } n \text{ is even and } \xi_B=0\\
k_r+1 & \text{ if } n \text{ is odd and } \mathrm{ord}(\det(B_1)) \text{ is odd} \\
k_r & \text{ if } n \text{ is odd and } \mathrm{ord}(\det(B_1)) \text{ is even,} 
\end{cases}$$
and
$\frke_i=\frke_i((\GK(B_1),a_n))$ for $i=n-1,n$. 
Then we have
\begin{align*}
\widetilde F(B,X)
=&C_n(\frke_n,\frke_{n-1},\xi;X)\widetilde F(B_1, 2^{1/2} X) \notag \\
&+\eta C_n(\frke_n,\frke_{n-1},\xi;X^{-1})\widetilde F(B_1,2^{1/2} X),\notag
\end{align*}
where $\xi=\begin{cases} \xi_B & \text{ if } n \text{ is even}\\
\xi_{B_1} & \text{ if } n \text{ is odd},
\end{cases}$

and 

$\eta=\begin{cases} 1 & \text{ if } n \text{ is even}\\
\eta_{B} & \text{ if } n \text{ is odd}.
\end{cases}$
\item[(2)] Suppose that $B$ does not satisfy the condition in (1). Let $(a_{n-1},a_n)=(k_r,k_r)$ and $\frke_i=\frke_i(GK(B^{(n-2)},a_{n-1},a_n)$. 
Then 
\begin{align*}
& \widetilde F(B,X) \\
&=C_n(\frke_n,\frke_{n-1},\xi;X)C_{n-1}(\frke_{n-1},\frke_{n-2},\xi';2^{1/2}X) \widetilde F(B^{(n-2)},2X) \\
&+C_n(\frke_n,\frke_{n-1},\xi;X^{-1})C_{n-1}(\frke_{n-1},\frke_{n-2},\xi';2^{1/2}X^{-1}) \widetilde F(B^{(n-2)},2X^{-1}) \\
&+\{ C_n(\frke_n,\frke_{n-1},\xi;X)\eta' C_{n-1}(\frke_{n-1},\frke_{n-2},\xi';(2^{1/2}X)^{-1})\\
&+\eta C_n(\frke_n,\frke_{n-1},\xi;X^{-1}) C_{n-1}(\frke_{n-1},\frke_{n-2},\xi';(2^{1/2}X^{-1})^{-1})\}
\widetilde F(B^{(n-2)},X)
\end{align*}
where 
$$\xi=\begin{cases} \xi_{B} & \text{ if } n \text{ is even}\\
0  & \text{ if } n \text{ is odd and } a_1+\cdots+a_{n-1} \text{ is odd} \\
\pm 1 & \text{ if } n \text { is odd and } a_1+\cdots+a_{n-1} \text{ is odd,} 
\end{cases}$$
$$\eta=\begin{cases} 1 & \text{ if } n \text{ is even}\\
\eta_{B} & \text{ if } n \text { is odd,}
\end{cases}$$
and 
$$\xi'=\begin{cases} \xi_{B^{(n-2)}} & \text{ if } n \text{ is even}\\
0 & \text{ if } n \text{ is odd and } a_1+\cdots+a_{n-1} \text{ is odd}  \\
\pm 1 &  \text{ if } n \text { is odd and  }  a_1+\cdots+a_{n-1} \text{ is even,} 
\end{cases}$$
$$\eta'=\begin{cases} \eta_{B}\xi_{B}^{a_n} & \text{ if } n \text{ is even and } a_1+\cdots+a_n \text{ is even }\\
\pm 1 &  \text{ if } n \text{ is even and  }  a_1+\cdots+a_n \text{ is odd} \\
\eta_{B^{(n-2)}} & \text{ if } n \text{ is odd.}
\end{cases}$$

\end{itemize}
\end{theorem}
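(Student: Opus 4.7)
The strategy is to reduce each case of Theorem \ref{th.5.2} to an application of Theorem \ref{th.5.1}, using the fact that $\widetilde F(B,X)$ depends only on the $GL_n(\ZZ_2)$-equivalence class of $B$, so we may replace $B$ by any conveniently chosen equivalent weak canonical form. Throughout, the values of $a_n$ (and $a_{n-1}$) prescribed in the statement will be forced to coincide with the last one or two entries of $\GK(B)$ by Theorem \ref{th.3.1}, so that the normalizing quantities $\frke_i$ on the right-hand side are consistent with the definition $\frke_i=\frke_i(\GK(B))$.

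For Part (1), under condition (1.1) or (1.2) I would show that $B$ is $GL_n(\ZZ_2)$-equivalent to $B_1\bot(2^{k_r}u)$ for some $u\in\ZZ_2^{\times}$, that is, to a weak canonical form whose final block is a single diagonal unit at level $k_r$. The mechanism is the family of rewriting identities between diagonal units and $H/Y$-type binary forms in the unramified dyadic setting: a pair $\langle 2^{k_s}u_{n_s}\rangle\bot 2^{k_r}J_j$ with $k_r=k_s+1$ can be rewritten so as to extract a diagonal unit of level $k_r$, provided the relevant Hasse and $\xi$-invariants match. Conditions (1.1) and (1.2) are precisely what makes iterating such swaps through $J_{s+1},\ldots,J_r$ feasible. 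Once $B\sim B_1\bot(2^{k_r}u)$ is secured, Theorem \ref{th.5.1}(1) applied to this new presentation yields the claimed induction formula; the listed values of $a_n$, $\xi$, and $\eta$ drop out by tracking $\xi_{B_1},\xi_B,\eta_B$ and the parity of $\mathrm{ord}(\det B_1)$ through the swap.

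For Part (2), when conditions (1.1) and (1.2) both fail, no single diagonal unit can be brought to the tail, so I would instead peel off the entire terminal block $2^{k_r}J_r$ (which is $H$ or $Y$). The needed two-step induction formula is formally analogous to Theorem \ref{th.5.1}(2) but with the rank-$2$ extracted block hyperbolic or anisotropic binary rather than unimodular diagonal; such a formula can either be quoted from \cite{Kat} or derived by rerunning the generating-function calculation in the proof of Theorem \ref{th.5.1}(2) with the substitutions $\xi=0$ (for $H$-type $J_r$) or $\xi=\pm1$ (for $Y$-type $J_r$). The case split on $(\xi,\xi',\eta,\eta')$ in the statement then encodes the parities of $n$ and of $a_1+\cdots+a_{n-1}$; the $\pm1$ entries reflect subcases in which the extracted block may be chosen as either $H$ or $Y$, both yielding the same Siegel series.

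The main obstacle is the rearrangement step behind Part (1): tracking $\xi$, $\eta$, and $\mathrm{ord}(\det)$ through each elementary block swap between a diagonal unit and a neighbouring $H/Y$ block is delicate in the unramified dyadic setting, and one must verify that conditions (1.1) and (1.2) are both necessary and sufficient for a $2^{k_r}u$ tail to emerge after rearrangement. Once this rewriting lemma is in hand, the remainder of the argument in both parts is an essentially mechanical combination of Theorem \ref{th.5.1} with the $\GK$-computation of Theorem \ref{th.3.1}.
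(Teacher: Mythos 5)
There is a genuine error in your treatment of Part (1). You propose to show that $B$ is $GL_n(\ZZ_2)$-equivalent to $B_1\perp(2^{k_r}u)$ for some unit $u$, i.e.\ to extract a diagonal unit \emph{at level $k_r$} by swapping it past the blocks $J_{s+1},\dots,J_r$. This is impossible: $B$ is, up to permuting its orthogonal summands, literally equal to $B_1\perp(2^{k_s}u_{n_s})$, so $\ord(\det B)=\ord(\det B_1)+k_s$, whereas $\ord(\det(B_1\perp 2^{k_r}u))=\ord(\det B_1)+k_r=\ord(\det B_1)+k_s+1$. No sequence of rewriting identities can change the valuation of the determinant, so the equivalence you want to establish is false, and the values of $a_n$ in the statement (which lie in $\{k_s+1,k_s+2\}=\{k_r,k_r+1\}$, matching a removed unit at level $k_s$, not $\{k_r+1,k_r+2\}$) confirm this. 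The actual content of Part (1) is not a rearrangement at all: the decomposition $B\cong B_1\perp(2^{k_s}u_{n_s})$ is free, and the hypotheses (1.1)/(1.2) serve instead to guarantee that this non-monotone ordering is the one compatible with the Gross--Keating invariant, i.e.\ that $\GK(B)=(\GK(B_1),a_n)$ with $a_n$ as listed (this is exactly the situation of Proposition \ref{prop.2.1} (3.1.1) and (3.2.1) and Theorem \ref{th.3.1}), so that the one-step recursion of [\cite{Kat}, Theorem 4.1] applies with the unit $2^{k_s}u_{n_s}$ played last. The paper's own proof is precisely this: (1) with $n_s=1$ is quoted from [\cite{Kat}, Theorem 4.1], (1) with $n_s=2$ by rerunning the argument of [\cite{Kat}, Theorem 4.2], and (2) from [\cite{Kat}, Theorem 4.2].

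Your Part (2) is closer to the mark --- peeling off the terminal even binary block and invoking (or re-deriving) the two-step recursion of [\cite{Kat}, Theorem 4.2] is what the paper does, and your reading of the $\pm1$ entries is reasonable --- but note that Part (2) covers all cases in which the hypotheses of Part (1) fail, not only the failure of (1.1) and (1.2); for instance it also covers $k_s\neq k_r-1$ or $J_s$ not diagonal. As written, the proposal cannot be repaired without replacing the entire mechanism of Part (1).
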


\begin{proof}

The assertion (1) for the case $\deg J_r=1$
follows from \cite[Theorem 4.1]{Kat}, and 
the assertion (1) for the case $\deg J_r=2$
can also be proved by using the same argument as in the proof of \cite[Theorem 4.2]{Kat}.
 (2) follows from  \cite[Theorem 4.2]{Kat}.

\end{proof}

By rewriting Theorems \ref{th.5.1} and \ref{th.5.2}, we obtain the following two theorems.

\begin{theorem}
\label{th.5.3}
Let $B=2^{k_1}J_1 \bot \cdots \bot 2^{k_r}J_r \in \calh_n(\ZZ_2)$ be a pre-optimal form.  Suppose that $\deg J_r=1$.  Let $a_n$ be that defined in Theorem \ref{th.3.1} (2), and put  $\frke_i=\frke_i((GK(B^{(n-1)}),a_n))$ for $i=n-1,n$. Then
\begin{align}
\widetilde F(B,X)
=&C_n(\frke_n,\frke_{n-1},\xi;X)\widetilde F(B^{(n-1)}, 2^{1/2} X) \notag \\
&+\eta C_{n-1}(\frke_n,\frke_{n-1},\xi;X^{-1})\widetilde F(B^{(n-1)},2^{1/2} X),\notag
\end{align}
where    
$\xi=\begin{cases} \xi_B & \text{ if } n \text{ is even} \\
\xi_{B^{(n-1)}} & \text{ if } n \text{ is odd},
\end{cases}$ 

 and 

$\xi=\begin{cases} 1 & \text{ if } n \text{ is even} \\
\eta_{B} & \text{ if } n \text{ is odd}.
\end{cases}$
\end{theorem}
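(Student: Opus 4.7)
\medskip

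\noindent\textbf{Proof plan for Theorem \ref{th.5.3}.}
The plan is to derive Theorem \ref{th.5.3} as a direct rewriting of Theorem \ref{th.5.1}(1). The Siegel series $\widetilde F(B,X)$ and all the invariants appearing on the right-hand side ($\xi_B,\ \xi_{B^{(n-1)}},\ \eta_B,\ \ord(\det B),\ \ord(\det B^{(n-1)})$) depend only on the $GL_n(\frko)$-equivalence class of $B$, so the content of Theorem \ref{th.5.3} is the assertion that the formula of Theorem \ref{th.5.1}(1) can be expressed using the pre-optimal decomposition of $B$ in place of a weak canonical decomposition.

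First I would produce a weak canonical form $B'$ equivalent to $B$ whose terminal block is precisely $2^{k_r}C_r$, by applying Proposition \ref{prop.2.1} to a weak canonical form obtained from $B^{(n-1)}$ and then adjoining $2^{k_r}C_r$ (which is unimodular diagonal of degree $1$, so causes no obstruction to weak-canonicity of the tail). Combined with Corollary \ref{cor.4.1} this gives $(B')^{(n-1)}\sim B^{(n-1)}$, so that
\[
\widetilde F(B,X)=\widetilde F(B',X),\qquad \widetilde F(B^{(n-1)},Y)=\widetilde F((B')^{(n-1)},Y).
\]
Applying Theorem \ref{th.5.1}(1) to $B'$ then yields the desired identity for $\widetilde F(B,X)$, provided the quantities $\frke_n,\frke_{n-1},\xi,\eta$ and $a_n$ used in Theorem \ref{th.5.1}(1) coincide with those of Theorem \ref{th.5.3}.

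The identification of the parameters is the heart of the argument and should be verified case-by-case according to the parity of $n$. Since $\xi_\bullet$ and $\eta_\bullet$ are equivalence-class invariants, the only nontrivial point is to check that the value of $a_n$ prescribed in Theorem \ref{th.5.1}(1) (defined in terms of $\xi_B,\xi_{B^{(n-1)}}$ and parities of $\ord(\det)$) agrees with the value prescribed by Theorem \ref{th.3.1}(2) for the pre-optimal form $B$ at $s=r$. This is a purely formal comparison: in the pre-optimal setting we have $B^{[r-1]}=B^{(n-1)}$ (because $\deg C_r=1$), and the case distinctions of Theorem \ref{th.3.1}(2) exactly reproduce those of Theorem \ref{th.5.1}(1). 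Once both definitions give the same integer $a_n=\GK(B)_n$, the expressions $\frke_{n-1},\frke_n$ coincide as well, and Theorem \ref{th.5.3} follows.

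The main obstacle I anticipate is the construction of the intermediate weak canonical form $B'$ with the prescribed terminal block $2^{k_r}C_r$. The delicate point is that a pre-optimal form need not be weak canonical: by (PO2)(4) one may have $k_{r}=k_{r-1}-1$ when $C_{r-1}$ belongs to $\tfrac12(S_2(\frko)_e\cap GL_2(\frko))$, so the $k$-values can decrease at the last step. One must therefore rearrange the blocks of $B^{(n-1)}$ into weak canonical order without destroying the property that the resulting $(n-1)$-submatrix is equivalent to $B^{(n-1)}$; this is guaranteed by Proposition \ref{prop.2.1} combined with Corollary \ref{cor.4.1}, but requires a careful bookkeeping of the invariants $\xi$ and $\ord(\det)$ under the rearrangement.
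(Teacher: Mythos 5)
Your overall strategy (reduce to the induction formulas for weak canonical forms, using that $\widetilde F$, $\xi_\bullet$, $\eta_\bullet$ are $GL_n(\frko)$-equivalence invariants, and then match the case distinctions for $a_n$) is the paper's strategy, and the parameter-matching part of your plan is sound: with $s=r$ and $\deg C_r=1$ one has $B^{[r-1]}=B^{(n-1)}$, and the three cases of Theorem \ref{th.3.1}(2) do coincide with those of Theorem \ref{th.5.1}(1). However, there is a genuine gap in the step you yourself flag as delicate. When $k_r=k_{r-1}-1$ with $C_{r-1}\in\tfrac12(S_2(\frko)_e\cap GL_2(\frko))$ (the configuration permitted by (PO2)(4) and (PO4)), \emph{no} weak canonical form equivalent to $B$ can have terminal block $2^{k_r}C_r$: the scales of the Jordan constituents are equivalence invariants, a weak canonical form requires $k_1\le\cdots\le k_r$, and $B$ has a constituent of scale $k_{r-1}>k_r$, so every weak canonical form equivalent to $B$ must end in a scale-$k_{r-1}$ block of $K$-type. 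Proposition \ref{prop.2.1} and Corollary \ref{cor.4.1} cannot rescue this; they go in the opposite direction (from weak canonical to pre-optimal, reordering the degree-one diagonal block to the \emph{end}), which is precisely why the pre-optimal and weak canonical orderings disagree here.

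The correct resolution for that case is not Theorem \ref{th.5.1}(1) but Theorem \ref{th.5.2}(1): there the matrix $B_1$ (the weak canonical form with the interior diagonal entry $2^{k_s}u_{n_s}$ deleted) is exactly equivalent to $B^{(n-1)}$ of the pre-optimal form by Proposition \ref{prop.2.1}(3.1.1)/(3.2.1), its two-term recursion has exactly the shape asserted in Theorem \ref{th.5.3}, and the hypotheses (1.1)/(1.2) of Theorem \ref{th.5.2}(1) are precisely what (PO4)(1)/(2) guarantee for a pre-optimal form. This is why the paper derives Theorem \ref{th.5.3} by rewriting Theorems \ref{th.5.1} \emph{and} \ref{th.5.2}, not Theorem \ref{th.5.1} alone. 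So your proof is complete in the cases where $C_{r-1}$ is diagonal or $k_r\ge k_{r-1}$, but you must add the appeal to Theorem \ref{th.5.2}(1) (and the corresponding check that its $a_n$ agrees with Theorem \ref{th.3.1}(2)) to cover the remaining case.
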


\begin{theorem} 
\label{th.5.4}
Let $B=2^{k_1}J_1 \bot \cdots \bot 2^{k_r}J_r$ be a pre-optimal form.  Suppose that $\deg J_r=2$.  Let $(a_{n-1},a_n)$ be that defined in Theorem \ref{th.3.1} (1),(3), and put  $\frke_i=\frke_i((GK(B^{(n-2)}),a_{n-1},a_n))$ for $i=n-2,n-1,n$.   Then,
\begin{align*}
& \widetilde F(B,X) \\
&=C_n(\frke_n,\frke_{n-1},\xi;X)C_{n-1}(\frke_{n-1},\frke_{n-2},\xi';2^{1/2}X) \widetilde F(B^{(n-2)},2X) \\
&+C_n(\frke_n,\frke_{n-1},\xi;X^{-1})C_{n-1}(\frke_{n-1},\frke_{n-2},\xi';2^{1/2}X^{-1}) \widetilde F(B^{(n-2)},2X^{-1}) \\
&+\{ C_n(\frke_n,\frke_{n-1},\xi;X)\eta' C_{n-1}(\frke_{n-1},\frke_{n-2},\xi';(2^{1/2}X)^{-1})\\
&+\eta C_n(\frke_n,\frke_{n-1},\xi;X^{-1}) C_{n-1}(\frke_{n-1},\frke_{n-2},\xi';(2^{1/2}X^{-1})^{-1})\}
\widetilde F(B^{(n-2)},X)
\end{align*}
where 
$$\xi=\begin{cases} \xi_{B} & \text{ if } n \text{ is even}\\
0  & \text{ if } n \text{ is odd and } a_1+\cdots+a_{n-1} \text{ is odd} \\
\pm 1 & \text{ if } n \text { is odd and } a_1+\cdots+a_{n-1} \text{ is odd,} 
\end{cases}$$
$$\eta=\begin{cases} 1 & \text{ if } n \text{ is even}\\
\eta_{B} & \text{ if } n \text { is odd,}
\end{cases}$$
and 
$$\xi'=\begin{cases} \xi_{B^{(n-2)}} & \text{ if } n \text{ is even}\\
0 & \text{ if } n \text{ is odd and } a_1+\cdots+a_{n-1} \text{ is odd}  \\
\pm 1 &  \text{ if } n \text { is odd and  }  a_1+\cdots+a_{n-1} \text{ is even,} 
\end{cases}$$ 
$$\eta'=\begin{cases} \eta_{B}\xi_{B}^{a_n} & \text{ if } n \text{ is even and } a_1+\cdots+a_n \text{ is even }\\
\pm 1 &  \text{ if } n \text{ is even and  }  a_1+\cdots+a_n \text{ is odd} \\
\eta_{B^{(n-2)}} & \text{ if } n \text{ is odd.}
\end{cases}$$
\end{theorem}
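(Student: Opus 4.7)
The plan is to derive Theorem \ref{th.5.4} from the induction formulas for weak canonical forms given in Theorems \ref{th.5.1}(2) and \ref{th.5.2}, combined with the explicit description of the pair $(a_{n-1}, a_n) = (\GK(B)_{n-1}, \GK(B)_n)$ provided by Theorem \ref{th.3.1}. The hypothesis $\deg C_r = 2$ divides into two subcases: (a) $C_r$ is a diagonal unimodular $2 \times 2$ matrix, and (b) $C_r \in \frac{1}{2}(S_2(\frko)_e \cap GL_2(\frko))$.

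First I would handle subcase (a). Here the pre-optimal form $B$ is already a weak canonical form whose last block is a diagonal unimodular $2 \times 2$ matrix, so Theorem \ref{th.5.1}(2) applies to $B$ directly. By Theorem \ref{th.3.1}(3), $(a_{n-1}, a_n) = (k_r + 1, k_r + 1)$, and I would check that this matches the values of $(a_{n-1}, a_n)$ prescribed in the subcases (2.1) and (2.2) of Theorem \ref{th.5.1}, which are separated according to $\xi_B$, $\xi_{B^{(n-2)}}$, and the parity of $\mathrm{ord}(\det B)$. For subcase (b), condition (PO1) combined with (PO2) forces the trailing segment of the pre-optimal decomposition to consist of a maximal block of components in $\frac{1}{2}(S_2(\frko)_e \cap GL_2(\frko))$ sharing the exponent $k_r$, so $B$ already has the shape of a weak canonical form to which Theorem \ref{th.5.2} applies. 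Theorem \ref{th.3.1}(1) yields $(a_{n-1}, a_n) = (k_r, k_r)$, matching the choices made in Theorem \ref{th.5.2}.

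The remaining and most delicate task is to translate the case-by-case specifications of the parameters $\xi, \xi', \eta, \eta'$ appearing in Theorems \ref{th.5.1}(2) and \ref{th.5.2} into the unified parity-based description of Theorem \ref{th.5.4}. For even $n$, Theorem \ref{th.1.1} together with Theorem \ref{th.3.1} translates conditions like ``$\mathrm{ord}(\det B)$ is odd'' or ``$\xi_B \neq 0$'' directly into the parity of $a_1 + \cdots + a_n$ and the value of $\xi_B$; for odd $n$, the invariant $\eta$ is governed by Theorem \ref{th.1.4} and its interplay with the parity of $a_1 + \cdots + a_{n-1}$. I would then check each branch of the source theorems against the formula in Theorem \ref{th.5.4} and verify that the entries $\pm 1$ appearing in the odd-parity branches are consistent with the permissible variation of $\xi_{B^{(n-1)}}$ and $\eta_{B^{(n-1)}}$.

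The main obstacle is exactly this bookkeeping: although each individual comparison is mechanical, the combinatorics of parities and $\xi$-values generate many subcases, and the partition of cases in Theorems \ref{th.5.1}(2) and \ref{th.5.2} is organized differently from that in Theorem \ref{th.5.4}. Ensuring the translation is exhaustive and that the rules for $\xi, \xi', \eta, \eta'$ agree with those in Theorem \ref{th.5.4} on every branch is the technical heart of the argument. No conceptually new input is required beyond the induction formulas of Theorems \ref{th.5.1}--\ref{th.5.2}, the identification of $(a_{n-1}, a_n)$ in Theorem \ref{th.3.1}, and the structural results \ref{th.1.1} and \ref{th.1.4}.
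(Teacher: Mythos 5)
Your proposal follows essentially the same route as the paper, which obtains Theorem \ref{th.5.4} precisely by ``rewriting'' Theorems \ref{th.5.1}(2) and \ref{th.5.2} (themselves imported from [Kat, Theorems 4.1--4.2]) via the identification of $(a_{n-1},a_n)$ in Theorem \ref{th.3.1}. The only point worth making explicit is that a pre-optimal $B$ with $\deg C_r=2$ need not literally be a weak canonical form; but by Proposition \ref{prop.2.1} it is $GL_n(\frko)$-equivalent to one block-by-block, with $B^{(n-2)}$ equivalent to the corresponding truncation, and since $\widetilde F$ depends only on the equivalence class this suffices.
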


 Now we give an explicit formula for the Siegel series of $B$ 
in terms of its naive EGK datum.

\begin{theorem}
\label{th.5.5} Let $B \in \calh_n(\ZZ_p)$. Then there exists a naive EGK datum $H$ such that
\[\widetilde F(B,X)=\calf(H;X). \]
\end{theorem}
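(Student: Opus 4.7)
The plan is to reduce to pre-optimal forms and proceed by induction on the number $r$ of components. The non-dyadic case $p$ odd is straightforward: every $B$ is diagonal, and the naive EGK datum provided by Theorem \ref{th.4.4} matches the Siegel series through the standard one-step recursion in Definition \ref{def.5.2}. So I focus on $F = \QQ_2$ and, by Proposition \ref{prop.2.1}, assume $B = 2^{k_1}C_1 \bot \cdots \bot 2^{k_r}C_r$ is pre-optimal. The candidate $H$ is the naive EGK datum produced by Theorem \ref{th.4.3}, whose length, the sequence $\GK(B) = (a_1,\ldots,a_n)$, and the signs $\vep_i$ are all explicit in terms of $\xi_{B^{(i)}}$, $\eta_{B^{(i)}}$, and $\mathrm{ord}(\det B^{(i)})$, with the compatibility across truncations furnished by Theorem \ref{th.4.1} and Corollary \ref{cor.4.1}.

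The base case $r = 1$ divides into $\deg C_1 = 1$, $\deg C_1 = 2$ diagonal, and $C_1 \in \frac12(S_2(\frko)_e \cap \GL_2(\frko))$; in each, a direct computation matches Proposition \ref{prop.5.1} (and its degree two analogue) with Definition \ref{def.5.2}. For the inductive step when $\deg C_r = 1$, Theorem \ref{th.5.3} writes $\widetilde F(B, X)$ as a linear combination of $\widetilde F(B^{[r-1]}, 2^{1/2}X^{\pm 1})$ with coefficients $C_n(\frke_n, \frke_{n-1}, \xi; X^{\pm 1})$ and multiplier $\eta$. By induction, $\widetilde F(B^{[r-1]}, Y) = \calf(H'; Y)$ for the truncated datum $H'$ assigned to $B^{[r-1]}$ via Theorem \ref{th.4.3}, so the comparison reduces to checking that the pair $(\xi, \eta)$ from Theorem \ref{th.5.3} matches the $(\xi, \zeta)$ built from $\vep_n, \vep_{n-1}$ in Definition \ref{def.5.2}. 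This coincidence is precisely what the rule for $\vep_n$ in Theorem \ref{th.4.3} is designed to produce, branch by branch according to the parity of $n$ and the values of $\xi_{B^{(n-1)}}$ and $\xi_B$.

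The main obstacle, and the bulk of the proof, is the case $\deg C_r = 2$, because Theorem \ref{th.5.4} is a two-step formula whereas Definition \ref{def.5.2} is only a one-step recursion. I would apply the recursion of Definition \ref{def.5.2} twice in succession to pass from $\calf(H; X)$ to a four-term combination involving $\calf(H''; 2X^{\pm 1})$, $\calf(H''; X)$ and $\calf(H''; X^{-1})$, where $H''$ denotes $H$ truncated by its last two entries. These four terms must then be matched against the three terms of Theorem \ref{th.5.4}, the coefficient of $\widetilde F(B^{(n-2)}, X)$ receiving the contributions from the two cross terms in the double expansion. At this step one uses the functional-equation symmetry $\widetilde F(B^{(n-2)}, X) = \widetilde F(B^{(n-2)}, X^{-1})$ of the normalized Siegel series, together with the corresponding symmetry of $\calf(H''; X)$ that propagates inductively from the obviously symmetric base case $\calf(H; X) = \sum_i X^{i - a_1/2}$. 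The delicate point is that the values of $\vep_{n-1}$ dictated by the several branches of Theorem \ref{th.4.3} in the $\deg C_r = 2$ case, including $\vep_{n-1} = 0$ and $\vep_{n-1} = \pm 1$, are exactly calibrated so that the products $\zeta$ and $\zeta'$ built from $(\vep_n, \vep_{n-1}, \vep_{n-2})$ reproduce the parameters $\xi, \xi', \eta, \eta'$ appearing in Theorem \ref{th.5.4}. The verification splits according to the parity of $n$, the parity of $a_1 + \cdots + a_{n-1}$, and the values of $\xi_{B^{(n-2)}}$ and $\xi_B$; once each sub-case is checked, the induction closes and the theorem follows.
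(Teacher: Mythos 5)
Your proposal is correct and follows essentially the same route as the paper: reduce to a pre-optimal form, take the naive EGK datum supplied by Theorem \ref{th.4.3}, and match the recursion defining $\calf(H;X)$ against the induction formulas of Theorems \ref{th.5.3} and \ref{th.5.4}, with the non-dyadic case handled via Theorem \ref{th.4.4}. The only difference is organizational (induction on the number of components rather than on $n$, and an explicit double-unfolding of Definition \ref{def.5.2} in the $\deg C_r=2$ case, which is exactly the content of the paper's ``similarly'').
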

\begin{proof} First assume that $p=2$. We prove the assertion by induction on $n$. 
The assertion holds  for $B \in \calh_1(\ZZ_2)$ by Proposition \ref{prop.5.1}. Let $n \ge 2$ and assume that the assertion holds for any $n'<n$ and $B' \in \calh_{n'} (\ZZ_2)$. Let $B \in \calh_n(\ZZ_2)$. We may assume that 
$B=2^{k_1} C_1 \bot \cdots \bot 2^{k_r}C_r$ is a pre-optimal form.
Take the naive EGK datum $H=H^{[r]}=(a_1,\ldots,a_n;\vep_1,\ldots,\vep_n)$ of $B$ as in Theorem \ref{th.4.3}. 
First assume that $\deg C_r=1$. Then $H^{[r-1]}$ is a naive EGK datum of $B^{(n-1)}$. We note that 
$(\vep_{n-1},\vep_n)=(\xi_B,1)$ or $(\eta_B,\xi_{B^{(n-1)}})$ according as $n$ is even or odd. Hence, by Theorem \ref{th.5.3} combined with the induction assumption, we have
\begin{equation}
\label{eq.1}
\widetilde F(B,X)=\calf(H;X).
\end{equation} 
 Similarly, the equality (\ref{eq.1})
 for $\deg C_r=2$ follows from Theorem \ref{th.5.4}. 
This completes the induction.

Next assume that $p$ is odd. Then, the assertion can be proved in the same manner as above by  \cite[Theorem 4.1]{Kat}, Theorem \ref{th.4.4} and Proposition \ref{prop.5.1}.

\end{proof}

\begin{remark}
\begin{itemize}
\item[(1)] $\calf(H,X)$ is a Laurent polynomial in $X^{1/2}$ and
it is uniquely determined by $\EGK(B)$ (cf. \cite[Theorem 4.1]{IK2}). 
Therefore,  Theorem \ref{th.5.5} implies that $\widetilde F(B,X)$ can be expressed explicitly in terms of $\EGK(B)$. This holds  for any non-archimedean local field $F$ of characteristic $0$ and any $B \in \calh_n(\frko)$ (cf. \cite[Theorem 1.1]{IK2}).
\item [(2)] Assertions similar to Theorems \ref{th.5.1} and 
\ref{th.5.2} can easily be proved by using the same argument in the proofs of \cite[Theorems 4.1 and 4.2]{Kat} in the case that
$F$ is a non-dyadic field or a finite unramified extension of $\QQ_2$.  Therefore, Theorem \ref{th.5.5} can be proved in this case without using the results of \cite{IK2}. 
\end{itemize}
\end{remark}

\section{Matrix reductions for weak canonical forms}\label{sec:reduction}
The purpose of this section is to elaborate the key mechanisms behind our computer program \textit{computeGK}.
To make use of explicit formulas in the previous sections, it is required to find a pre-optimal form which in turn could be obtained from a weak canonical form by the argument of Proposition \ref{prop.2.1}. It is straightforward to turn this argument into a computer code. Thus, in this section we discuss the detailed procedures to obtain a weak canonical decomposition for arbitrarily given $B\in \mat{n}{2}$ for the purpose of algorithmic computation. 
\\

{\bf Jordan decomposition.}
Let $B\in \mat{n}{p}$, where $p$ is not necessarily $2$. Let us review how to find an orthogonal sum $B'$ of Jordan components such that $B \sim B'$. By a \textit{Jordan component}, here we mean a matrix of degree 1, or a matrix of the form $(b_{ij})\in \mat{2}{2}$ such that $\ord(b_{11}),\ord(b_{22}) > \ord(b_{12})$ when $p=2$. We call an orthogonal sum of the matrices of the latter form is of type II. Although such a procedure is well-known (cf. \cite{MR0118704,MR522835}), 
we have included a simple procedure for completeness, which is adopted from the code used in \cite{MR3739221}.

Let $B=(b_{ij})_{1\leq i,j\leq n}$ be given with $n\geq 2$.

Step 0 :  By rearranging the rows and columns of $B$, we can assume that $B$ has an entry in the first row which has minimum order among all $b_{ij}$. Let $j_0$ be the smallest index among $1\leq j\leq n$ such that $b_{1j}$ has minimal order.

Step 1 :  If $j_0=1$, then skip Step 2 and proceed to Step 3. If $j_0\geq 3$, we can move $b_{1j_0}$ to the second column by reordering the indices. Because such an operation preserves the set of diagonal (and non-diagonal), it cannot be moved to the position of the first diagonal. Hence, we can assume that $j_0=2$, i.e., $b_{12}$ has minimum order among all $b_{ij}$.

Step 2 :  This step is only necessary if $p$ is odd. Let $U = E_{12}(1)$, where $E_{ij}(a)\in GL_n(\ZZ_p)$ with $a\in \ZZ_p$ denotes the matrix whose diagonal entries are 1 and the only non-zero non-diagonal entry is $a$ at the position $(i,j)$. Then $B'=(b_{ij}')$ defined by $B' = U B U^{t}$ has entries $b_{11}' = b_{11}+2b_{12}+b_{22}$, $b_{1j}' = b_{1j}+b_{2j},\, b_{j1}'=b_{1j}'\, (j>1)$, and $b_{ij}'=b_{ij},\, (i,j>1)$. Then $\ord(b_{11}')=\ord(b_{12})$ and $\ord(b_{1j}')=\ord(b_{j1}')\geq \ord(b_{12}+b_{22})=\ord(b_{12})$ for $j>1$. Therefore, $b_{11}'$ has minimum order among all $b_{ij}'$. Now we redefine $B$ as $B'$, and thus, $b_{11}$ has minimum order.

Step 3 : Let $B_0 : = (b_{11})$ if $b_{11}$ has minimum order, and  $B_0: = \begin{pmatrix}
  b_{11} & b_{12} \\
  b_{21} & b_{22}
  \end{pmatrix}$
otherwise, i.e., $b_{12}$ has minimum order with $\ord(b_{11})>\ord(b_{12})$. Let $m=\deg B_0\in \{1,2\}$. So $B$ takes the form
$$
B = 
\left(\begin{array}{@{}c|c@{}}
B_0& X^{t} \\
\hline
  X & B_1
\end{array}\right)
$$
for some $(n-m)\times m$ matrix $X$ and $B_1$ of degree $n-m$. Define a matrix of degree $n$ by
$$
U = \left(\begin{array}{@{}c|c@{}}
I_{m}& 0 \\
\hline
  XB_0^{-1} & I_{n-m}
\end{array}\right).
$$
Now we claim that $U\in GL_n(\ZZ_p)$. Note that $XB_0^{-1}$ has entries in $\ZZ_p$. Indeed, when $B_0$ has degree 1 this is due to the minimality of $\ord(b_{11})$. When $\deg B_0=2$, 
$c_{12} =-b_{12}(b_{11}b_{22}-b_{12}^2)^{-1}$ has minimum order $\ord(c_{12}) = -\ord(b_{12})$ among all entries of $B_0^{-1}=(c_{ij})$. Thus, $XB_0^{-1}$ has entries in $\ZZ_p$ in any case. Since $U$ is a lower triangular matrix with unit diagonal, its inverse also has entries in $\ZZ_p$. Hence, $U\in GL_n(\ZZ_p)$.

We obtain
$$
B' = UBU^{t}=\left(\begin{array}{@{}c|c@{}}
B_0& 0 \\
\hline
0 & B_1'
\end{array}\right)
$$
with $B_1'\in \mat{n-m}{p}$.

Repeating Steps 0-3 with matrices of smaller size, we eventually obtain a Jordan decomposition of $B$.

{\bf Watson reduction.}
Now assume that $p=2$ and $B\in \mat{n}{2}$ is an orthogonal sum of the Jordan components. 
Let us explain how to find a matrix, $GL_n(\ZZ_2)$-equivalent to $B$, of the following form in an explicit manner :
\begin{equation}\label{eqn:Watson}
2^{e_1}(V_1\bot U_1) \bot \ldots \bot 2^{e_m}(V_m \bot U_m)
\end{equation}
with $0\leq e_1 <\cdots < e_m$, $U_i = \emptyset$ or $(u_1)$ or $(u_1) \bot (u_2)$ for $u_1,u_2 \in \{1,3,5,7\} $, and $V_i = \emptyset$ or $H\bot \cdots \bot H$ or $H \bot \cdots \bot H \bot Y$ (see, for instance, \cite[Theorem 35]{MR0118704}). Here, 
$H=\left(
\begin{array}{cc}
 0 & \frac{1}{2} \\
 \frac{1}{2} & 0 \\
\end{array}
\right)$ and $Y=\left(
\begin{array}{cc}
 1 & \frac{1}{2} \\
 \frac{1}{2} & 1 \\
\end{array}
\right)$.
We say that a matrix of the form (\ref{eqn:Watson}) is \textit{Watson-reduced}. 

First note that a matrix $(b_{ij})\in \mat{2}{2}$ of type II with $\ord(b_{12})=0$ and $d=b_{11}b_{22}-b_{12}^2$ is $GL_2(\ZZ_2)$-equivalent to either $2H$ or $2Y$ if $d\equiv 7 \pmod 8$ or $d\equiv 3 \pmod 8$, respectively. Thus, we can assume that $B$ is of the form
\begin{equation}\label{eqn:preWatson}
2^{e_1'}(V_1'\bot U_1') \bot \ldots \bot 2^{e_m'}(V_m' \bot U_m')
\end{equation}
as in (\ref{eqn:Watson}), but $U_i' =u_1 \bot \cdots \bot u_{k_i}$ not necessarily with $0\leq k_i\leq 2$, and
\[
V_i' =H\bot \dots \bot H \bot \underbrace{Y\bot \dots \bot Y}_{m_i}
\]
with $m_i\geq 2$ possibly.

To make a reduction into the form (\ref{eqn:Watson}), we can use the following facts (see, for example, \cite[118p.]{MR522835}) : $Y\bot Y\sim H\bot H$, and for any $u_1,u_2,u_3\in \{1,3,5,7\}$, there exist $u'\in \{1,3,5,7\}$ and $K\in \{H,Y\}$ such that 
\begin{equation}\label{eqn:uuutou2k}
(u_1)\bot (u_2)\bot (u_3)\sim (u')\bot 2K.
\end{equation}
See Table \ref{table:uuutou2K} for an explicit description of (\ref{eqn:uuutou2k}). Applying these rules repeatedly, we get (\ref{eqn:Watson}) equivalent to (\ref{eqn:preWatson}).

\begin{tiny}
\begin{table}[ht]
\begin{tabular}{ccc|ccc}
$(u_1,u_2,u_3)$ & $(u',K)$ & $\text{genus symbol}$ & $(u_1,u_2,u_3)$ & $(u',K)$ & $\text{genus symbol}$ \\
\hline
$(1, 1, 1)$	&	$(3, Y)$	&	$\subsuperscript{1}{3}{3}$	&	$(3, 3, 3)$	&	$(1, Y)$	&	$\subsuperscript{1}{1}{-3}$	\\
$(1, 1, 3)$	&	$(5, H)$	&	$\subsuperscript{1}{5}{-3}$	&	$(3, 3, 5)$	&	$(3, H)$	&	$\subsuperscript{1}{3}{-3}$	\\
$(1, 1, 5)$	&	$(7, Y)$	&	$\subsuperscript{1}{7}{-3}$	&	$(3, 3, 7)$	&	$(5, Y)$	&	$\subsuperscript{1}{5}{3}$	\\
$(1, 1, 7)$	&	$(1, H)$	&	$\subsuperscript{1}{1}{3}$	&	$(3, 5, 5)$	&	$(5, H)$	&	$\subsuperscript{1}{5}{-3}$	\\
$(1, 3, 3)$	&	$(7, H)$	&	$\subsuperscript{1}{7}{3}$	&	$(3, 5, 7)$	&	$(7, H)$	&	$\subsuperscript{1}{7}{3}$	\\
$(1, 3, 5)$	&	$(1, H)$	&	$\subsuperscript{1}{1}{3}$	&	$(3, 7, 7)$	&	$(1, Y)$	&	$\subsuperscript{1}{1}{-3}$	\\
$(1, 3, 7)$	&	$(3, H)$	&	$\subsuperscript{1}{3}{-3}$	&	$(5, 5, 5)$	&	$(7, Y)$	&	$\subsuperscript{1}{7}{-3}$	\\
$(1, 5, 5)$	&	$(3, Y)$	&	$\subsuperscript{1}{3}{3}$	&	$(5, 5, 7)$	&	$(1, H)$	&	$\subsuperscript{1}{1}{3}$	\\
$(1, 5, 7)$	&	$(5, H)$	&	$\subsuperscript{1}{5}{-3}$	&	$(5, 7, 7)$	&	$(3, H)$	&	$\subsuperscript{1}{3}{-3}$	\\
$(1, 7, 7)$	&	$(7, H)$	&	$\subsuperscript{1}{7}{3}$	&	$(7, 7, 7)$	&	$(5, Y)$	&	$\subsuperscript{1}{5}{3}$	\\
\end{tabular}
\caption{$(u_1,u_2,u_3)$ and $(u',K)$ such that $(u_1)\bot (u_2)\bot (u_3)\sim (u')\bot 2K$}
\label{table:uuutou2K}
\end{table}
\end{tiny}





{\bf Weak canonical reduction.}
Let $B\in \mat{n}{2}$. We want to find a weak canonical form equivalent to $B$. Assume that $B$ is Watson-reduced, that is, it is of the form (\ref{eqn:Watson})
\[
2^{e_1}(V_1\bot U_1) \bot \ldots \bot 2^{e_m}(V_m \bot U_m),
\]
where each summand $2^{e_j}(V_j\bot U_j)$ is given as either
\[
 \underbrace{(2^{e_j}H\bot \dots \bot 2^{e_j}H)}_{\text{possibly empty}} \bot (2^{e_j}U_j),\,
 \text{or }  \underbrace{(2^{e_j}H\bot \dots \bot 2^{e_j}H)}_{\text{possibly empty}} \bot (2^{e_j}Y) \bot (2^{e_j}U_j)
\]
with possibly empty $U_j$. By enumerating all non-empty Jordan components above for $1\leq j \leq m$, without disturbing the order, we obtain the decomposition 
\[
B=2^{k_1}J_1\bot\cdots\bot 2^{k_r}J_r,
\]
where $J_i$ is one of $H$, $Y$, and $U_j$ for some $1\leq j\leq m$. Such a decomposition clearly satisfies Conditions (1) and (2) of Definition \ref{def.2.1}; in plain language, these conditions mean that (1) the list $(2^{k_i}J_i)_{1\leq i \leq r}$ of $B$ are sorted by the size of $k_i$ so that $k_1$ is the smallest, (2) a diagonal matrix $2^{k_i}J_i$ comes after the components of the form $2^{k}K$ with $K \in \{H, Y\}$ with $k=k_i$, and there is no other diagonal matrix $2^{k}J$ with $k=k_i$.

Now we claim that the submatrix of $B$ obtained by removing all components of the form $2^{k_i}J_i$ with $J_i \in \{H, Y\}$ satisfies Condition (3) of Definition \ref{def.2.1} if and only if so does $B$. To see this, note that $\deg_{B''\bot 2^k K} \equiv \deg_{B''} \pmod 2$ and $\ordet{(B''\bot 2^k K)} \equiv \ordet{B''} \pmod 2$ for any $B''\in \mat{n}{2}$ and $k\in \ZZ_{\geq 0}$, where $K$ is either $H$ or $Y$. Moreover, when the degree of $B''$ is even, $\xi_{B''\bot 2^k K}= 0\iff \xi_{B''}= 0$.

Therefore, we can simply assume that $B = 2^{k_1}J_1\bot\cdots\bot 2^{k_r}J_r$ with $k_1<\dots <k_r$, where each $J_i$ is of the form $(u_1)$ or $(u_1) \bot (u_2)$ for some $u_1,u_2 \in \{1,3,5,7\}$, and that it is not a weak canonical form but satisfies (1) and (2) of Definition \ref{def.2.1}. So let $j\in \{1,\dots, r-1\}$ be the minimal position at which Condition (3) is not met, namely, $k_{j+1}=k_{j}+1$, and $\deg B^{[j]}$ and $\ordet{B^{[j]}}$ are even, but $\xi_{B^{[j]}}\neq 0$. Since $\deg B^{[j]}$ is even, $B^{[j]}$ has at least two diagonal entries, and thus, $B^{[j]}$ and $B$ take the following forms :
\begin{equation}\label{eqn:wcf1}
B^{[j]} = 
\left(\begin{array}{@{}c|c@{}}
B_j& 0 \\
\hline
  0 &
2^{k_j}u_1  
\end{array}\right)
,\qquad
B=\left(\begin{array}{@{}c|c@{}}
B^{[j]} & 0 \\
\hline
  0 &
  \begin{matrix}
  2^{k_j+1}u_2 & 0 \\
  0 & \ddots
  \end{matrix}
\end{array}\right).
\end{equation}
Here, $u_{1}$ is the last entry of $J_{j}$, $u_{2}$ is the first entry of $J_{j+1}$, and $B_j$ is the diagonal submatrix of $B^{[j]}$ obtained by removing its last diagonal $2^{k_j}u_1$. So $\deg B_j \geq 1$.

Recall that $\xi_{B^{[j]}} = \chi_2((-1)^{\deg B^{[j]}/2} \det B^{[j]})$, where
for $a = 2^{r} c\in \ZZ_2^{\times}$ with $\ord(c) =0$, 
$$\chi_2(a) =
\begin{cases}
1 & \text{if $r \equiv 0 \bmod 2, \, c \equiv 1 \bmod 8$} \\
-1 & \text{if $r \equiv 0 \bmod 2, \, c \equiv 5 \bmod 8$} \\ 
0 & \text{otherwise}
\end{cases}
$$
(cf. \cite[428p.]{Kat}). Thus, $\xi_{B^{[j]}} = \chi_2((-1)^{\deg B^{[j]}/2} 2^{\ordet{B^{[j]}}}u_0u_1)$, where $\det B_j=2^k u_0$ with $k=\ord(B_j)\in \ZZ_{\geq 0}$. As $\ordet{B^{[j]}} = k+k_j$ is even, the condition $\xi_{B^{[j]}}\neq 0$ implies that $(-1)^{\deg B^{[j]}/2}u_0u_1\equiv 1\,\text{or }5 \pmod 8$. Choose $c\in \{1,3,5,7\}$ such that $c\equiv (-1)^{\deg B^{[j]}/2}u_0 \pmod 8$. Then $cu_1\equiv 1 \pmod 4$.

Let us describe a rule to replace only these $u_1,u_2$ from $B$ into $u_1',u_2'$ to get a new matrix $B'$ such that $\xi_{B^{'[j]}}= 0$ and
\begin{equation}\label{eqn:wcf2}
B=\left(\begin{array}{@{}c|c@{}}
B_j & 0 \\
\hline
  0 &
  \begin{matrix}
2^{k_j}u_1 &  0 & 0 \\
0 &  2^{k_j+1}u_2 & 0 \\
0 &  0 & \ddots
  \end{matrix}
\end{array}\right)
\sim
\left(\begin{array}{@{}c|c@{}}
B_j & 0 \\
\hline
  0 &
  \begin{matrix}
2^{k_j}u_1' &  0 & 0 \\
0 &  2^{k_j+1}u_2' & 0 \\
0 &  0 & \ddots
  \end{matrix}
\end{array}\right)
=B'.
\end{equation}
Note that if such an operation exists, then $B'$ still satisfies Conditions (1) and (2) of Definition \ref{def.2.1}, and possibly violates Condition (3) only at positions strictly bigger than $j$. 
Therefore, applying such a replacement rule ultimately leads to a weak canonical form equivalent to $B$.

\begin{lemma}\label{lma:uutouu1}
Let $c,u_1,u_2\in \{1,3,5,7\}$ be as above. There exist $u_1',u_2'\in \{1,3,5,7\}$ satisfying the following :
\begin{itemize}
\item If $u_1\not\equiv u_2 \pmod 4$, then $u_i\not\equiv u_i' \pmod 4$, and $\left(\frac{u_i}{2}\right)=\left(\frac{u_i'}{2}\right)$ for each $i=1,2$, and $u_1+u_2\equiv u_1'+u_2' \pmod 8$;
\item If $u_1\equiv u_2 \pmod 4$, then $u_i\not\equiv u_i' \pmod 4$, and $\left(\frac{u_i}{2}\right)\neq \left(\frac{u_i'}{2}\right)$ for each $i\in \{1,2\}$, and $u_1+u_2+4\equiv u_1'+u_2' \pmod 8$.
\end{itemize}
We have $cu_1' \equiv 3\ \pmod 4$, and $(2^{k_j}u_1)\bot (2^{k_j+1}u_2)\sim (2^{k_j}u_1')\bot (2^{k_j+1}u_2')$.
\end{lemma}

\begin{proof}
See Table \ref{table:uutouu} for the existence of such $u_1'$ and $u_2'$. 
As $cu_1 \equiv 1\ \pmod 4$, $cu_1' \equiv 3\ \pmod 4$. Regarding the last part of the statement, it is enough to show that $(u_1)\bot (2 u_2)\sim (u_1')\bot (2u_2')$. Let us borrow a few terms from \cite[Section 7.5 of Chapter 15]{MR1194619} for a simple proof. When $u_1\not\equiv u_2 \pmod 4$, the genus symbols of $(u_1)\bot (2 u_2)$ and $(u_1')\bot (2u_2')$ are the same up to the `oddity fusion'. When $u_1\equiv u_2 \pmod 4$, the genus symbols of $(u_1)\bot (2 u_2)$ and $(u_1')\bot (2u_2')$ are the same up to the `sign walk'.
\end{proof}

\begin{tiny}
\begin{table}[ht]
\begin{tabular}{c|cc|cc}
& $(u_1,u_2)$ &  $(u_1',u_2')$ & $(u_1,u_2)$ &  $(u_1',u_2')$ \\
\hline
\multirow{2}{*}{$u_1\not\equiv u_2 \pmod 4$} 
& $(1, 3)$  &  	$(7, 5)$  &  	$(5, 3)$  &  	$(3, 5)$ \\
& $(1, 7)$  &  	$(7, 1)$  &  	$(5, 7)$  &  	$(3, 1)$ \\
& $(3, 1)$  &  	$(5, 7)$  &  	$(7, 1)$  &  	$(1, 7)$ \\
& $(3, 5)$  &  	$(5, 3)$  &  	$(7, 5)$  &  	$(1, 3)$ \\
\hline
\multirow{2}{*}{$u_1\equiv u_2 \pmod 4$}
& $(1, 1)$  &  	$(3, 3)$  &  	$(5, 1)$  &  	$(7, 3)$ \\
& $(1, 5)$  &  	$(3, 7)$  &  	$(5, 5)$  &  	$(7, 7)$ \\
& $(3, 3)$  &  	$(1, 1)$  &  	$(7, 3)$  &  	$(5, 1)$ \\
& $(3, 7)$  &  	$(1, 5)$  &  	$(7, 7)$  &  	$(5, 5)$
\end{tabular}
\caption{$(u_1,u_2,u_1',u_2')$ satisfying the conditions in Lemma \ref{lma:uutouu1}.}
\label{table:uutouu}
\end{table}
\end{tiny}


\appendix
\section{Tables of intersection numbers of three modular correspondences}
In this appendix we provide tables of arithmetic intersection numbers of three modular correspondences generated by \textit{computeGK}.
As mentioned in the Introduction, the Gross-Keating invariants for ternary quadratic forms over $\Zp$ have been originally introduced to express those numbers. Let us denote the set of non-degenerate half-integral matrices with entries in $\ZZ$ by $\Zmat{n}$. We can regard $Q\in \Zmat{n}$ as an element of $\mat{n}{p}$ for any prime $p$.

For $m\in \ZZ_{\geq 1}$, let $\phi_m(X,Y)\in \ZZ[X,Y]$ be the classical modular polynomial; see \cite{GK} and the references therein. Let $m_1,m_2,m_3\in \ZZ_{\geq 1}$. Gross and Keating showed that the cardinality of the quotient ring $\ZZ[X,Y]/(\phi_{m_1},\phi_{m_2},\phi_{m_3})$ is finite if and only if there is no positive definite binary quadratic form $a x^2+bxy+cy^2$ with $a,b,c\in \ZZ$ which represents the three integers $m_1,m_2,m_3$. Assume that $m_1,m_2,m_3$ satisfy this condition. Let $S=\mathrm{Spec}\, \ZZ[X,Y]$ and $T_m$ be the divisor on $S$ corresponding to $\phi_m$. We define the arithmetic intersection number as follows :
\begin{equation}\label{eqn:TTT}
\begin{aligned}
\intmult : & = \log \# \ZZ[X,Y]/(\phi_{m_1},\phi_{m_2},\phi_{m_3}) \\
& = \sum_{p}n(p)\log p,
\end{aligned}
\end{equation}
with $n(p)=0$ for $p>4m_1m_2m_3$. Gross and Keating found an explicit formula for $n(p)$.
\begin{theorem}\cite[Proposition 3.22]{GK}\label{thm:GKformula}
Let $p$ be a prime. We have
$$
n(p) = \frac{1}{2}\sum_{Q}\left(\prod_{l\mid \Delta,\, l\neq p} \beta_l(Q) \right)\cdot \alpha_p(Q),
$$
with $\Delta = 4\det Q\in \ZZ$. The sum is over all positive definite matrices $Q\in \Zmat{3}$ with diagonal $(m_1,m_2,m_3)$ which are isotropic over $\QQ_{l}$ for all $l\neq p$. Such $Q$ is anisotropic over $\Qp$ and $p$ divides $\Delta$. The quantities $\alpha_p(Q)$ and $\beta_p(Q)$ are given as follows.

Let $H = (a_1, a_2, a_3; \vep_1, \vep_2, \vep_3)$ be a naive EGK datum of $Q$ regarded as elements of
$\mat{3}{p}$. When $a_1\equiv a_2 \pmod 2$ and $a_2<a_3$, we further define $\epsilon$ to be $\vep_2$.
If $a_1\equiv a_2 \pmod 2$, then $\alpha_p(Q)$ is equal to
\begin{multline*}
\sum_{i=0}^{a_1-1} (i+1) (a_1+a_2+a_3-3 i)p^i \\ +\sum_{i=a_1}^{(a_1+a_2-2)/2} (a_1+1) (2a_1+a_2+a_3-4 i) p^i+\frac{1}{2} (a_1+1) (a_3-a_2+1) p^{(a_1+a_2)/2}.
\end{multline*}
If $a_1\not\equiv a_2 \pmod 2$, then $\alpha_p(Q)$ is equal to
$$
\sum_{i=0}^{a_1-1} (i+1) (a_1+a_2+a_3-3 i)p^i +\sum _{i=a_1}^{(a_1+a_2-1)/2} (a_1+1) (2a_1+a_2+a_3-4i)p^i.
$$
If $a_1\equiv a_2 \pmod 2$ and either $\epsilon =1$ or $a_2=a_3$, then $\beta_p(Q)$ is equal to
$$
\sum _{i=0}^{a_1-1} 2(i+1)p^i +\sum _{i=a_1}^{(a_1+a_2-2)/2} 2(a_1+1)p^i+(a_1+1) (a_3-a_2+1) p^{(a_1+a_2)/2}.
$$
If $a_1\equiv a_2 \pmod 2$ and $\epsilon =-1$, then $\beta_p(Q)$ is equal to
$$
\sum _{i=0}^{a_1-1} 2(i+1)p^i +\sum _{i=a_1}^{(a_1+a_2-2)/2} 2(a_1+1)p^i+(a_1+1) p^{(a_1+a_2)/2}.
$$
If $a_1\not\equiv a_2 \pmod 2$, then $\beta_p(Q)$ is equal to
$$
\sum _{i=0}^{a_1-1} 2(i+1)p^i +\sum _{i=a_1}^{(a_1+a_2-2)/2} 2(a_1+1)p^i.
$$
\end{theorem}
By using this formula with the help of \textit{computeGK}, we have produced tables of $n(p)$ when $1\leq m_1\leq m_2 \leq m_3\leq 30$; see Tables \ref{table:int1} and \ref{table:int2}. The necessary steps are as follows :
\begin{enumerate}
\item[(1)] Find triples $\vecm=(m_1,m_2,m_3)$ such that there is no positive definite binary integral quadratic form representing these integers;
\item[(2)] For such a triple $\vecm$ and a prime $p\leq 4m_1m_2m_3$, find the list 
$\LBp$ of all positive definite matrices $Q\in \Zmat{3}$ with diagonal $(m_1,m_2,m_3)$, which are isotropic over $\QQ_{l}$ for all $l\neq p$;
\item[(3)] For $Q\in \LBp$, compute $\alpha_p(Q)$, and $\beta_l(Q)$ for primes $l\mid 4\det Q, \,l\neq p$, and
$$
n(p) = \frac{1}{2}\sum_{Q\in \LBp}\left(\prod_{l} \beta_l(Q) \right)\cdot \alpha_p(Q).
$$
\end{enumerate}
The last step is about computing a naive EGK datum of $Q$ over $\Zp$, which is covered in Section \ref{sec:negk}. We focus on the first two steps here.

\begin{tiny}
\begin{table}[ht]
\begin{adjustbox}{center}{
\begin{tabular}{c|cccccccccccccc}
 $(m_1,m_2,m_3)$ & 2 & 3 & 5 & 7 & 11 & 13 & 17 & 19 & 23 & 29 & 31 & 37 & 41 & 43 \\
\hline
 (1,2,15) & 450 & 108 & 36 & 156 & 28 & 72 & 36 & 36 & 18 & 10 & 34 & 8 & 22 & 8 \\
 (1,2,21) & 656 & 144 & 298 & 32 & 40 & 96 & 48 & 68 & 24 & 12 & 42 & 10 & 8 & 8 \\
 (1,2,30) & 738 & 324 & 108 & 468 & 88 & 216 & 108 & 144 & 84 & 32 & 104 & 24 & 58 & 16 \\
 (1,3,10) & 452 & 182 & 36 & 144 & 58 & 72 & 60 & 24 & 54 & 52 & 14 & 12 & 10 & 4 \\
 (1,3,14) & 604 & 144 & 290 & 24 & 104 & 96 & 60 & 32 & 60 & 84 & 22 & 16 & 28 & 20 \\
 (1,3,26) & 1060 & 252 & 508 & 336 & 132 & 12 & 132 & 56 & 81 & 136 & 32 & 28 & 68 & 28 \\
 (1,3,29) & 2164 & 180 & 366 & 240 & 108 & 120 & 108 & 44 & 54 & 8 & 32 & 12 & 20 & 12 \\
 (1,3,30) & 1812 & 290 & 144 & 576 & 266 & 288 & 216 & 96 & 186 & 202 & 56 & 48 & 92 & 32 \\
 (1,5,22) & 1484 & 1456 & 108 & 434 & 10 & 216 & 108 & 216 & 72 & 64 & 122 & 58 & 30 & 88 \\
 (1,6,13) & 1152 & 432 & 512 & 336 & 304 & 12 & 108 & 80 & 69 & 32 & 32 & 50 & 20 & 26 \\
 (1,7,26) & 2124 & 2216 & 1014 & 84 & 248 & 24 & 404 & 200 & 216 & 180 & 58 & 92 & 216 & 72 \\
 (1,7,30) & 3624 & 1516 & 288 & 182 & 532 & 576 & 648 & 340 & 444 & 268 & 106 & 120 & 354 & 48 \\
 (1,8,15) & 882 & 540 & 180 & 762 & 414 & 360 & 324 & 180 & 114 & 86 & 134 & 150 & 58 & 156 \\
 (1,8,21) & 1336 & 720 & 1498 & 144 & 612 & 480 & 432 & 340 & 186 & 100 & 154 & 170 & 36 & 116 \\
 (1,8,30) & 1314 & 1620 & 540 & 2286 & 1332 & 1080 & 972 & 720 & 678 & 296 & 398 & 300 & 158 & 208 \\
 (1,10,21) & 3952 & 1488 & 396 & 144 & 652 & 576 & 480 & 408 & 276 & 204 & 198 & 252 & 64 & 190 \\
 (1,10,23) & 2740 & 2912 & 216 & 864 & 442 & 440 & 428 & 288 & 42 & 172 & 170 & 232 & 64 & 138 \\
 (1,10,27) & 4540 & 362 & 360 & 1448 & 862 & 744 & 600 & 528 & 522 & 360 & 148 & 188 & 84 & 300 \\
 (1,11,30) & 5476 & 1296 & 576 & 1892 & 178 & 864 & 648 & 724 & 264 & 470 & 420 & 216 & 346 & 168 \\
 (1,12,29) & 4504 & 1260 & 2586 & 1698 & 942 & 856 & 756 & 776 & 427 & 52 & 380 & 192 & 180 & 168 \\
 (1,13,24) & 2360 & 2180 & 2580 & 1716 & 1296 & 60 & 540 & 400 & 780 & 260 & 332 & 160 & 212 & 228 \\
 (1,13,30) & 6908 & 2752 & 504 & 2040 & 1604 & 82 & 664 & 480 & 906 & 400 & 320 & 156 & 300 & 160 \\
 (1,14,27) & 6080 & 288 & 2956 & 240 & 1556 & 972 & 600 & 704 & 612 & 580 & 212 & 224 & 296 & 340 \\
 (1,15,26) & 6340 & 1512 & 680 & 2036 & 982 & 72 & 796 & 504 & 306 & 488 & 320 & 464 & 460 & 616 \\
 (1,17,30) & 9148 & 1944 & 648 & 2612 & 1424 & 1316 & 122 & 724 & 1092 & 464 & 632 & 492 & 292 & 360 \\
 (1,19,30) & 9100 & 3624 & 984 & 2924 & 1780 & 1442 & 880 & 106 & 708 & 1080 & 530 & 444 & 592 & 192 \\
 (2,4,15) & 1026 & 756 & 252 & 1070 & 500 & 504 & 396 & 360 & 284 & 118 & 200 & 124 & 94 & 84 \\
 (2,4,21) & 1424 & 1008 & 2092 & 208 & 632 & 672 & 528 & 440 & 340 & 164 & 312 & 178 & 64 & 172 \\
 (2,4,30) & 3114 & 2268 & 756 & 3220 & 1448 & 1512 & 1188 & 1008 & 684 & 328 & 640 & 432 & 258 & 400 \\
 (2,6,23) & 2964 & 2252 & 2736 & 1750 & 944 & 864 & 612 & 576 & 10 & 420 & 320 & 176 & 224 & 140 \\
 (2,10,21) & 6256 & 2592 & 864 & 432 & 2188 & 1728 & 1248 & 1160 & 776 & 760 & 746 & 468 & 460 & 484 \\
 (2,13,30) & 10372 & 4536 & 2044 & 6120 & 3000 & 216 & 2392 & 2016 & 1428 & 1490 & 1008 & 1224 & 1276 & 1488 \\
 (3,5,22) & 5480 & 1296 & 610 & 1742 & 104 & 864 & 650 & 432 & 280 & 340 & 292 & 270 & 270 & 452 \\
 (3,7,26) & 9316 & 2016 & 4188 & 434 & 1866 & 96 & 1032 & 952 & 480 & 472 & 480 & 440 & 340 & 276 \\
 (3,7,30) & 15828 & 2352 & 1588 & 576 & 2900 & 2308 & 1728 & 1632 & 924 & 764 & 792 & 924 & 516 & 648 \\
 (5,7,26) & 12700 & 13594 & 1008 & 646 & 2386 & 168 & 1256 & 1276 & 1392 & 756 & 734 & 672 & 560 & 340 \\
 (5,7,30) & 21828 & 5184 & 688 & 864 & 3976 & 3464 & 2160 & 2178 & 1976 & 1412 & 1262 & 1284 & 798 & 872 \\
\end{tabular}}
\end{adjustbox}
\caption{$n(p)$ for given $(m_1,m_2,m_3)$ and $2\leq p\leq 43$}
\label{table:int1}
\end{table}

\begin{table}[ht]
\begin{adjustbox}{center}{
\begin{tabular}{c|ccccccccccccccccc}
 $(m_1,m_2,m_3)$ & 47 & 53 & 59 & 61 & 67 & 71 & 73 & 79 & 83 & 89 & 97 & 101 & 103 & 107 & 109 & 113 & 127 \\
\hline
 (1,2,15) & 26 & 2 & 12 & 14 & 0 & 1 & 12 & 2 & 8 & 10 & 8 & 8 & 6 & 0 & 2 & 0 & 0 \\
 (1,2,21) & 33 & 6 & 36 & 14 & 8 & 10 & 12 & 10 & 18 & 8 & 4 & 10 & 12 & 0 & 2 & 0 & 2 \\
 (1,2,30) & 94 & 18 & 44 & 50 & 8 & 15 & 32 & 16 & 36 & 30 & 24 & 26 & 30 & 8 & 6 & 0 & 2 \\
 (1,3,10) & 10 & 10 & 2 & 6 & 0 & 9 & 4 & 6 & 10 & 2 & 0 & 8 & 0 & 6 & 2 & 2 & 0 \\
 (1,3,14) & 15 & 14 & 2 & 16 & 0 & 6 & 10 & 20 & 4 & 2 & 0 & 10 & 6 & 6 & 16 & 0 & 8 \\
 (1,3,26) & 56 & 38 & 28 & 42 & 0 & 50 & 16 & 28 & 14 & 2 & 0 & 24 & 4 & 4 & 20 & 2 & 18 \\
 (1,3,29) & 16 & 16 & 29 & 28 & 0 & 22 & 20 & 24 & 16 & 6 & 0 & 12 & 8 & 8 & 8 & 2 & 4 \\
 (1,3,30) & 74 & 70 & 36 & 52 & 0 & 57 & 20 & 36 & 54 & 20 & 0 & 40 & 12 & 20 & 20 & 8 & 4 \\
 (1,5,22) & 44 & 42 & 60 & 12 & 16 & 58 & 22 & 41 & 40 & 8 & 12 & 8 & 8 & 24 & 0 & 16 & 14 \\
 (1,6,13) & 16 & 32 & 32 & 16 & 12 & 42 & 12 & 22 & 12 & 22 & 12 & 12 & 12 & 8 & 4 & 6 & 0 \\
 (1,7,26) & 176 & 74 & 108 & 52 & 56 & 116 & 16 & 16 & 38 & 72 & 12 & 14 & 15 & 4 & 48 & 20 & 4 \\
 (1,7,30) & 248 & 136 & 184 & 74 & 76 & 190 & 26 & 36 & 130 & 126 & 48 & 46 & 38 & 44 & 56 & 56 & 4 \\
 (1,8,15) & 78 & 58 & 32 & 126 & 36 & 10 & 68 & 46 & 68 & 76 & 32 & 32 & 32 & 8 & 26 & 12 & 6 \\
 (1,8,21) & 103 & 164 & 96 & 96 & 44 & 38 & 72 & 74 & 102 & 80 & 36 & 34 & 42 & 24 & 18 & 20 & 12 \\
 (1,8,30) & 268 & 256 & 148 & 272 & 112 & 72 & 148 & 210 & 268 & 196 & 136 & 82 & 162 & 76 & 74 & 40 & 34 \\
 (1,10,21) & 92 & 144 & 96 & 54 & 124 & 98 & 142 & 62 & 66 & 48 & 82 & 42 & 24 & 32 & 32 & 50 & 20 \\
 (1,10,23) & 56 & 106 & 32 & 32 & 190 & 54 & 52 & 69 & 38 & 66 & 38 & 34 & 32 & 46 & 60 & 36 & 20 \\
 (1,10,27) & 80 & 138 & 60 & 124 & 176 & 83 & 140 & 134 & 120 & 80 & 64 & 94 & 80 & 80 & 68 & 46 & 44 \\
 (1,11,30) & 124 & 136 & 172 & 292 & 116 & 138 & 180 & 138 & 152 & 110 & 112 & 148 & 68 & 118 & 80 & 92 & 46 \\
 (1,12,29) & 96 & 108 & 66 & 92 & 128 & 66 & 184 & 80 & 52 & 36 & 48 & 58 & 54 & 96 & 80 & 100 & 52 \\
 (1,13,24) & 264 & 196 & 288 & 56 & 80 & 240 & 76 & 120 & 124 & 96 & 52 & 28 & 28 & 40 & 38 & 42 & 32 \\
 (1,13,30) & 488 & 240 & 384 & 72 & 84 & 266 & 100 & 146 & 308 & 182 & 62 & 96 & 88 & 132 & 50 & 72 & 26 \\
 (1,14,27) & 147 & 212 & 38 & 208 & 192 & 38 & 244 & 216 & 104 & 116 & 134 & 94 & 112 & 106 & 148 & 98 & 60 \\
 (1,15,26) & 344 & 162 & 152 & 230 & 236 & 58 & 256 & 100 & 156 & 162 & 78 & 94 & 114 & 64 & 212 & 60 & 90 \\
 (1,17,30) & 510 & 172 & 308 & 262 & 238 & 244 & 206 & 276 & 216 & 126 & 196 & 128 & 254 & 112 & 194 & 116 & 42 \\
 (1,19,30) & 424 & 448 & 466 & 240 & 182 & 415 & 184 & 234 & 248 & 266 & 232 & 252 & 146 & 118 & 146 & 122 & 82 \\
 (2,4,15) & 162 & 102 & 76 & 130 & 38 & 29 & 72 & 70 & 108 & 90 & 62 & 44 & 72 & 32 & 26 & 10 & 10 \\
 (2,4,21) & 137 & 102 & 156 & 182 & 80 & 62 & 88 & 142 & 158 & 88 & 50 & 70 & 134 & 38 & 44 & 30 & 46 \\
 (2,4,30) & 462 & 282 & 208 & 442 & 118 & 89 & 228 & 196 & 308 & 290 & 162 & 150 & 218 & 62 & 84 & 42 & 68 \\
 (2,6,23) & 210 & 132 & 60 & 128 & 96 & 96 & 142 & 94 & 70 & 86 & 52 & 144 & 64 & 128 & 92 & 102 & 102 \\
 (2,10,21) & 352 & 372 & 564 & 322 & 340 & 354 & 156 & 466 & 214 & 92 & 184 & 76 & 250 & 202 & 216 & 174 & 268 \\
 (2,13,30) & 1328 & 746 & 588 & 650 & 634 & 380 & 652 & 426 & 544 & 498 & 272 & 234 & 426 & 200 & 522 & 278 & 320 \\
 (3,5,22) & 252 & 138 & 114 & 128 & 178 & 56 & 144 & 68 & 92 & 220 & 90 & 124 & 62 & 118 & 68 & 48 & 44 \\
 (3,7,26) & 348 & 362 & 450 & 222 & 192 & 404 & 256 & 116 & 124 & 132 & 224 & 172 & 64 & 52 & 60 & 80 & 60 \\
 (3,7,30) & 572 & 712 & 650 & 304 & 408 & 488 & 464 & 210 & 362 & 272 & 352 & 216 & 150 & 126 & 100 & 266 & 92 \\
 (5,7,26) & 312 & 516 & 258 & 554 & 434 & 164 & 344 & 344 & 148 & 336 & 270 & 216 & 290 & 84 & 144 & 78 & 216 \\
 (5,7,30) & 520 & 896 & 666 & 802 & 746 & 374 & 736 & 718 & 468 & 500 & 526 & 384 & 498 & 246 & 272 & 248 & 316 \\
\end{tabular}}
\end{adjustbox}
\caption{$n(p)$ for given $(m_1,m_2,m_3)$ and $47\leq p\leq 127$}
\label{table:int2}
\end{table}
\end{tiny}

{\bf First step.}
The first step of the above can be reformulated as follows.

\begin{proposition}\cite[Proposition 3.5]{MR2340368}\label{prop:gortz}
Let $m_1, m_2, m_3$ be positive integers. The following are equivalent :
\begin{itemize}
\item There exists no positive definite integral binary quadratic form which represents $m_1, m_2$, and $m_3$.
\item Every positive semi-definite half-integral symmetric matrix $Q$ with diagonal $(m_1, m_2, m_3)$ is non-degenerate.
\end{itemize}
\end{proposition}
While the first condition is difficult to use for the purpose of checking if $\vecm=(m_1,m_2,m_3)$ satisfies it, the second condition is suitable for algorithmic treatment. The fist condition is still useful to rule out a given $\vecm$ quickly by tabulating the first few coefficients of the theta functions 
\[
\theta_{a,b,c}=\sum_{(x,y)\in \ZZ^2} q^{ax^2+bxy+cy^2}
\]
for a reasonably big finite set of positive definite binary quadratic forms $ax^2+bxy+cy^2$ of small $4ac-b^2$. 

Let $Q$ be a half-integral symmetric matrix with diagonal $(m_1, m_2, m_3)$, that is, 
\[
Q = \left(
\begin{array}{ccc}
 m_1 & \frac{t_3}{2} & \frac{t_2}{2} \\
 \frac{t_3}{2} & m_2 & \frac{t_1}{2} \\
 \frac{t_2}{2} & \frac{t_1}{2} & m_3 \\
\end{array}
\right)
\]
for some $t_i\in \ZZ$. For $Q$ to be positive semi-definite, $(t_1,t_2,t_3)\in \ZZ^3$ should satisfy the following inequalities for the principal minors :
\begin{equation}\label{eqn:tminor}
4 m_1 m_2-t_3^2\geq 0,\qquad t_1t_2 t_3-m_1 t_1^2-m_2 t_2^2-m_3 t_3^2+4 m_1 m_2 m_3\geq 0.
\end{equation}
When all such $(t_1,t_2,t_3)$, which form a finite set, satisfy the strict inequalities in the above, then Theorem \ref{thm:GKformula} applies to $(m_1,m_2,m_3)$. There are 37 such triples $(m_1,m_2,m_3)$ with $1\leq m_1\leq m_2\leq m_3\leq 30$.

Before we turn to the next step, we note that Theorem \ref{thm:GKformula} applies to infinitely many $(m_1,m_2,m_3)$. In other words, there are infinitely many triples satisfying the first condition of Proposition \ref{prop:gortz}. We can even obtain an infinite list with $m_1=1$ and $m_2=2$. If a positive definite integral binary quadratic form represents both $1$ and $2$, then it must be equivalent to one of $x^2+y^2$, $x^2+2y^2$, $x^2+xy+2y^2$. But there are infinitely many integers not represented by any of these three forms. For example, any prime $p$ such that $p \equiv 7 \pmod 8$ and $p$ is a quadratic non-residue modulo $7$ cannot be represented any of them.

\begin{tiny}
\begin{table}
\begin{adjustbox}{center}{
\begin{tabular}{c|ccccccc}
$\vecm$ & $\#\LA$ & $\#\LBB$ & $\#\LB{2}$ & $\#\LB{3}$ & $\#\LB{5}$ & $\#\LB{7}$ & $\#\LB{11}$\\
\hline
 (1,2,15) & 1119 & 1107 & 266 & 90 & 33 & 148 & 32 \\
 (1,2,21) & 1583 & 1567 & 374 & 134 & 232 & 27 &  48 \\
 (1,2,30) & 2251 & 2239 & 266 & 198 & 65 & 320 & 80 \\
 (1,3,10) & 1195 & 1192 & 288 & 122 & 40 & 128 & 72 \\
 (1,3,14) & 1669 & 1647 & 349 & 122 & 238 & 22 & 100 \\
 (1,3,26) & 3141 & 3061 & 611 & 200 & 374 & 280 & 134 \\
 (1,3,29) & 3487 & 3293 & 1266 & 204 & 332 & 236 & 132 \\
 (1,3,30) & 3539 & 3504 & 778 & 122 & 92 & 352 & 204 \\
 (1,5,22) & 4387 & 4262 & 760 & 956 & 66 & 356 & 10 \\
 (1,6,13) & 3017 & 2966 & 600 & 220 & 392 & 280 & 248 \\
 (1,7,26) & 7263 & 6923 & 1118 & 1396 & 696 & 50 & 244 \\
 (1,7,30) & 8251 & 8130 & 1530 & 626 & 178 & 88 & 376 \\
 (1,8,15) & 4663 & 4631 & 266 & 352 & 121 & 508 & 264 \\
 (1,8,21) & 6611 & 6579 & 374 & 502 & 900 & 101 & 408 \\
 (1,8,30) & 9315 & 9261 & 266 & 716 & 241 & 996 & 576 \\
 (1,10,21) & 8359 & 8229 & 1626 & 627 & 220 & 98 & 472 \\
 (1,10,23) & 9195 & 8689 & 1404 & 1788 & 171 & 628 & 340 \\
 (1,10,27) & 10777 & 10644 & 2020 & 122 & 258 & 848 & 560 \\
 (1,11,30) & 12901 & 12563 & 2225 & 780 & 290 & 990 & 62 \\
 (1,12,29) & 13483 & 13093 & 1266 & 880 & 1512 & 1084 & 676 \\
 (1,13,24) & 12323 & 12154 & 600 & 906 & 1554 & 1120 & 808 \\
 (1,13,30) & 15333 & 14930 & 2648 & 1090 & 306 & 1192 & 872 \\
 (1,14,27) & 15015 & 14647 & 2521 & 122 & 1710 & 150 & 800 \\
 (1,15,26) & 15361 & 14841 & 2626 & 904 & 306 & 1116 & 618 \\
 (1,17,30) & 20067 & 19319 & 3394 & 1188 & 375 & 1476 & 784 \\
 (1,19,30) & 22199 & 21364 & 3580 & 1366 & 440 & 1496 & 1024 \\
 (2,4,15) & 4665 & 4647 & 266 & 368 & 127 & 532 & 292 \\
 (2,4,21) & 6555 & 6519 & 374 & 518 & 930 & 83 & 380 \\
 (2,4,30) & 9391 & 9347 & 266 & 732 & 241 & 1044 & 568 \\
 (2,6,23) & 10729 & 10426 & 928 & 768 & 1346 & 880 & 504 \\
 (2,10,21) & 16193 & 15995 & 1608 & 1162 & 333 & 162 & 950 \\
 (2,13,30) & 30917 & 30253 & 2626 & 1960 & 662 & 2368 & 1376 \\
 (3,5,22) & 12939 & 12537 & 2302 & 806 & 284 & 1000 & 55 \\
 (3,7,26) & 21541 & 20728 & 3604 & 1198 & 2220 & 238 & 1060 \\
 (3,7,30) & 24715 & 24325 & 4642 & 627 & 548 & 284 & 1368 \\
 (5,7,26) & 35591 & 33382 & 5234 & 6692 & 584 & 298 & 1424 \\
 (5,7,30) & 41147 & 39853 & 6999 & 2492 & 214 & 374 & 1816 \\
\end{tabular}}
\end{adjustbox}
\caption{The sizes of $\LA$, $\LBB$, and $\LB{p}$}
\label{table:sizesofL}
\end{table}
\end{tiny}

{\bf Second step.}
Now we have a triple $\vecm = (m_1,m_2,m_3)$ and the list $\LA$ of positive definite matrix $Q\in \Zmat{3}$ with diagonal $(m_1,m_2,m_3)$ satisfying (\ref{eqn:tminor}). For each $Q\in \LA$, let us find all primes $p$ such that $Q$ is anisotropic over $\Qp$.
\begin{lemma}\cite[Lemma 3.1.7]{MR522835}\label{lma:cas}
Let $p$ be odd. Assume that $Q$ is $GL_3(\Qp)$-equivalent to $a_1x_1^2+a_2x_2^2+a_3x_3^2$. If $\ord_{p}(a_1) = \ord_{p}(a_2) = \ord_{p}(a_3)$, then $Q$ is isotropic.
\end{lemma}
To obtain a finite list of primes $p$ such that $Q$ is anisotropic over $\Qp$, first find a diagonal matrix $Q'$ with diagonals $a_1,a_2,a_3\in \ZZ$ such that $Q$ and $Q'$ are $GL_3(\QQ)$-equivalent. Then $Q$ is isotropic over $\Qp$ if $p \nmid 2a_1a_2a_3$ by Lemma \ref{lma:cas}. Thus, we only need to check whether $Q$ is anisotropic over $\Qp$, which is equivalent to the condition $(-a_1a_3,-a_2a_3)=-1$ involving the Hilbert symbol over $\Qp$, for primes $p \mid 2a_1a_2a_3$.

In this way, we obtain a subset $\LBB$ of $\LA$ consisting of $Q\in \LA$ such that $Q$ is anisotropic over $\Qp$ for some unique prime $p$, and isotropic over $\QQ_l$ for all $l\neq p$. For each prime $p$, let
$$
\LBp:=\{Q\in \LBB: \text{$Q$ is anisotropic over $\Qp$}\}.
$$
Note that we only need $\LBp$ for $p\leq 4m_1m_2m_3$. See Table \ref{table:sizesofL} for the sizes of $\LA$, $\LBB$ and $\LBp$ for some primes $p$.

{\bf Reliability checks.}
When $m_1=1$, we can calculate the resultant of $\phi_{m_2}(X,X)$ and $\phi_{m_3}(X,X)$ to obtain $\prod_{p}p^{n(p)}$. It requires an explicit expression\footnote{The data for $\phi_m$ is available at \url{https://math.mit.edu/~drew/ClassicalModPolys.html}.} for $\phi_m(X,Y)$; see, for example, \cite{MR2869057,MR3435725} for a method to compute $\phi_m$. We have checked the entries with $m_1=1$ in Tables \ref{table:int1} and \ref{table:int2} by calculating the resultants of these polynomials, and obtained the agreement.
\begin{example}
There are values of $n(p)$ for $(m_1,m_2,m_3)=(1,3,10)$ in \cite{GK}, which are said to be computed by Elkies using the resultant of the polynomials $\phi_{3}(X,X)$ and $\phi_{10}(X,X)$; they seem to be the unique example that had appeared in the literature. Table \ref{table:int1} shows that $n(2)=452$ for $(m_1,m_2,m_3)=(1,3,10)$, which matches Elkies' computation.
\end{example}


\end{document}